\newcommand{\alpharho}{}
\newcommand{\ex}[1]{{\mathcal H}_{#1}}
\newcommand{\laplace}{\mathcal{L}}
\newcommand{\gen}{L}
\newcommand{\fourier}{\mathcal{F}}
\newcommand{\mellin}{\mathcal{M}}
\newcommand{\contour}{{\gamma}}
    \def\qed{\hfill$\sqcap\kern-8.0pt\hbox{$\sqcup$}$\\}
    \def\beq{\begin{eqnarray}}
    \def\eeq{\end{eqnarray}}
    \def\beqq{\begin{eqnarray*}}
    \def\eeqq{\end{eqnarray*}}
    \def\re{\textnormal {Re}}
    \def\im{\textnormal {Im}}
    \def\p{{\mathbb P}}
    \def\e{{\mathbb E}}
    \def\r{{\mathbb R}}
    \def\c{{\mathbb C}}
    \def\d{{\textnormal d}}
    \def\i{{\textnormal i}}
    \def\ee{{\textnormal e}}
\newtheorem{theorem}{Theorem}
\newtheorem{lemma}{Lemma}
\newtheorem{proposition}{Proposition}
\newtheorem{corollary}{Corollary}
\theoremstyle{definition}
\newtheorem{definition}{Definition}
\newtheorem{remark}{Remark}
\title{Spectral analysis of stable processes on the positive half-line}
\author{ 
{Alexey Kuznetsov\footnote{Dept. of Mathematics and Statistics,  York University,
4700 Keele Street, Toronto, ON, M3J 1P3, Canada.   Email: kuznetsov@mathstat.yorku.ca}} , \;
{Mateusz Kwa{\'s}nicki\footnote{Institute of Mathematics and Computer Science  Wroc{\l}aw University of Technology  ul. Wybrze{\.z}e Wyspia{\'n}skiego 27  50-370 Wroc{\l}aw, Poland. Email: mateusz.kwasnicki@pwr.edu.pl}}
 }
\date{\today}
\begin{document}
\maketitle

\begin{abstract}
We study the spectral expansion of the semigroup of a general stable process killed on the first exit from the positive half-line. 
Starting with the Wiener-Hopf factorization we obtain the q-resolvent density for the killed process, from which we derive the spectral expansion of the semigroup via the inverse Laplace transform. The eigenfunctions and co-eigenfunctions are given rather explicitly in terms of the double sine function and they give rise to a pair of integral transforms which generalize the classical Fourier sine transform. Our results provide the first explicit example of a spectral expansion of the semigroup of a non-symmetric L\'evy process killed on the first exit form the positive half-line.
\end{abstract}
{\vskip 0.15cm}
 \noindent {\it Keywords}: stable process, first exit time, eigenfunctions, semigroup, resolvent, double sine function
{\vskip 0.25cm}
 \noindent {\it 2010 Mathematics Subject Classification }: Primary 60G52, Secondary 60J35

\section{Introduction}

Consider the Brownian motion process $W=\{W_t\}_{t\ge 0}$ started from $W_0=x>0$ and let $T_0$ be the first time when $W$ hits zero. It is 
well-known 
(see \cite[Appendix 1.3]{Borodin})  that the distribution of $T_0$ can be computed as
\begin{equation}\label{BM1}
 \p_x(T_0>t)=\frac{2}{\pi} \int_0^{\infty} e^{-t \lambda^2} \sin(\lambda x) \lambda^{-1} \d \lambda, 
\end{equation}
and that the transition probability density of the process $W$ killed at $T_0$ has spectral representation
\begin{equation}\label{BM2}
 p_t(x,y)=\frac{2}{\pi} \int_0^{\infty} e^{-t \lambda^2} \sin(\lambda x) \sin(\lambda y) \d \lambda. 
\end{equation}
To give a functional-analytic point of view, let us introduce the semigroup of the killed process as the family of operators
$\{P_t\}_{t\ge 0}$ defined by
\begin{equation}\label{def_semigroup}
P_t u(x)=\e_x[u(W_t) {\mathbf 1}_{\{T_0>t\}}], 
\end{equation}
and let us define the Fourier sine transform as 
\begin{equation}\label{def_Fourier_sine_transform}
\Pi u(\lambda)=\sqrt{2/\pi} \int_0^{\infty} u(x) \sin(\lambda x) \d x, \;\;\; u \in L^1(\r^+).
\end{equation}
It is well-known that $\Pi$ can be extended to an isometry in $L^2(\r^+)$ and then formula \eqref{BM1} gives us a spectral representation of the semigroup in $L^2(\r^+)$: 
\begin{equation}\label{BM3}
P_t = \Pi e^{- t \lambda^2} \Pi. 
\end{equation}
The function $\sin(\lambda x)$ plays an important role in the above expressions. It is the eigenfunction of the one-dimensional Laplace operator $\Delta$ (which is simply the second derivative operator, $\Delta u = u''$), with a Dirichlet boundary condition at zero. Note that $\Delta$ is the infinitesimal generator of the scaled Brownian motion $X_t=\sqrt{2}W_t$.

Brownian motion is a very special process, because it enjoys many useful properties: it is a diffusion process, it is a 
L\'evy process (a process with stationary and independent increments), it is also a self-similar process. The list of other Markov processes
with an explicit spectral representation of the semigroup is quite short.
We mention here the well-known case of one-dimensional diffusion processes \cite{Mandl}, branching processes
\cite{Ogura_1969},
a family of symmetric L\'evy processes obtained as a time change of Brownian motion \cite{Kwasnicki2011} and the recent work of Patie and Savov on non-selfadjoint Markov semigroups \cite{Patie_Savov}.

Our goal in this paper is to generalize the results \eqref{BM1}, \eqref{BM2} and \eqref{BM3} to strictly stable L\'evy processes (which we call simply {\it stable processes}) killed on the first exit 
from $(0,\infty)$. This class consists of L\'evy processes which satisfy  the scaling (or, self-similarity) property:
for any $c>0$ the process $\{cX_{t}\}_{t\ge 0}$ (started from $X_0=0$) has the same distribution as $\{X_{c^{\alpha} t}\}_{t\ge 0}$. Such processes exist when the stability parameter  $\alpha$ belongs to the interval $(0,2]$. When $\alpha=2$ we recover the scaled Brownian motion and for $\alpha<2$ we obtain a two-parameter family of processes with jumps that we will discuss in detail in the next section. 
The scaling property of stable processes is the main reason why they are so popular among researchers and why they appear so frequently in various applications originating in Physics, Chemistry and Biology. Stable processes
(or L\'evy flights, as they are also known in these fields) occur in modelling such
diverse phenomena as fluctuations and transport in plasma, turbulent diffusions, seismic series and earthquakes, signal processing and financial time series (see review article \cite{Dubkov}
for a comprehensive list of applications).

\subsection{Stable processes} 

A L\'evy process is usually defined through {\it the characteristic exponent} $\Psi(z):=-\ln \e_0[\exp(\i z X_1)]$. 
The characteristic exponent of a stable process is given by
\begin{equation}\label{def_Psi_stable}
\Psi(z)=|z|^{\alpha} e^{\pi \i \alpha (1/2-\rho) {\textnormal{sign}}(z)}, \;\;\; z\in \r. 
\end{equation} 
Here the parameters $(\alpha,\rho)$ belong to the following set of admissible parameters
\begin{equation}
{\mathcal A}:=\{\alpha \in (0,1], \; \rho \in (0,1)\} \cup 
\{\alpha \in (1,2], \;  1-1/\alpha< \rho < 1/\alpha  \}.  
\end{equation}
The parameter $\alpha$ is the same one that  appeared in the scaling property discussed above, and it can be shown that  
$\rho=\p_0(X_1>0)$, which explains why $\rho$ is called {\it the positivity parameter}.  
Everywhere in this paper we denote $\hat \rho=1-\rho$, and, more generally we will use the ``hat'' notation to refer to any objects obtained from the dual process $\hat X=-X$. 

Our set of admissible parameters excludes processes with one-sided jumps: subordinators or negative subordinators ($\alpha \in (0,1)$ and $\rho \in \{1,0\}$) and the spectrally-negative and spectrally-positive processes ($\alpha \in (1,2)$ and $\alpha\rho=1$ or $\alpha (1-\rho)=1$, respectively). The first two cases are not interesting, as the processes have monotone paths and the expression for the semigroup of the process on the positive half-line is rather simple, and the case of spectrally-positive/negative processes is covered separately in Section \ref{section_one_sided}.

Let us consider some special cases of stable processes. 
When $\alpha=2$ we necessarily have $\rho=1/2$ and the process $X$ in this case is simply the scaled Brownian motion 
$X_t=\sqrt{2}W_t$. When $\alpha=1$ we can rewrite the characteristic exponent in the form
$
\Psi(z)=\sin(\pi \rho) |z|+\i \cos(\pi \rho) z,
$
so that the process $X$ can be written as 
$
X_t=\sin(\pi \rho) Z_t - \cos(\pi \rho) t,
$
where $Z_t$ is the Cauchy process. A stable process with $\rho=1/2$ is symmetric (has the same distribution as $\hat X$) 
and can be obtained as a subordinated Brownian motion. More precisely, let $S_t$ be an $\alpha/2$-stable subordinator defined by the Laplace transform $\e[\exp( - z S_t)]=\exp(-t z^{\alpha/2})$ and independent of the Brownian motion $W$, 
then the process $\{X_t\}_{t\ge 0}$ has the same distribution as $\{\sqrt{2} W_{S_t}\}_{t\ge 0}$.

In the general case (except when $\alpha \in \{1,2\}$) a stable process is a pure-jump L\'evy process characterized by the density of the L\'evy measure 
$\nu\alpharho(x)$, given by
$$
\nu\alpharho(x)=c |x|^{-1-\alpha} {\bf 1}_{\{x>0\}}+\hat c |x|^{-1-\alpha} {\bf 1}_{\{x<0\}},
$$ 
where we have denoted
$
c=\Gamma(1+\alpha) \sin(\pi \alpha \rho)/\pi$ and $\hat c=\Gamma(1+\alpha) \sin(\pi \alpha (1-\rho))/\pi$. 
The L\'evy measure describes the distribution and the intensity of the jumps of the process, and it is connected to
the characteristic exponent via the L\'evy Khintchine formula 
$$
\Psi\alpharho(z)=-\int_{\r} (e^{\i z x}-1-\i z h(x)) \nu\alpharho(x)\d x,
$$
where $h(x)$ is the cutoff function, which is needed to ensure the convergence of the integral. Using the cutoff function $h(x)\equiv 0$ when 
$\alpha<1$ and $h(x)\equiv x$ when $\alpha>1$ one can check that the above integral representation for $\Psi(z)$ can be evaluated in closed form as given in \eqref{def_Psi_stable}. This construction also works when $\alpha=1$ and $\rho=1/2$ (in this case we can take the cutoff function $h(x)\equiv \sin(x)$). 

A stable process $X$ is also a Markov process, and its dynamics can be described by {\it the infinitesimal generator} $\gen$. This operator is defined for a suitable set of functions $u$ as follows  
$$
\gen\alpharho u(x)=\lim\limits_{t\to 0^+} \frac{1}{t} \big( \e_x[u(X_t)]-u(x)\big). 
$$
As we have mentioned above, when $\alpha = 2$ the infinitesimal generator is simply the one-dimensional Laplace operator, $\gen u = u''$. 
In the symmetric case (that is, when $\rho=1/2$) the infinitesimal generator is the fractional Laplace operator 
$\gen=-(-\Delta)^{\alpha/2}$.
In the general case, it  is a (non-local) integro-differential operator having the following form: 
\begin{align*}
&\gen u(x)=\int_{\r} (u(x+y)-u(x)-u'(x)h(y)) \nu\alpharho(y) \d y,
&& \text{if $\alpha \neq 1$,} \\
&\gen u(x)=-\cos(\pi \rho)u'(x)+\frac{1}{\pi} \sin(\pi \rho)\int_{\r} (u(x+y)-u(x)-u'(x)\sin(y))y^{-2} \d y,
&& \text{if $\alpha = 1$}.
\end{align*}
If we denote by $\fourier$ the Fourier transform operator 
$$
\fourier f(z)=\int_{\r} e^{\i z x } f(x) \d x,
$$
then the infinitesimal generator can be represented in a particularly simple form 
$\gen\alpharho=-\fourier^{-1} \Psi\alpharho(-z) \fourier$ (see \cite{Bertoin}[Proposition 9]).
In other words,  the infinitesimal generator $L$ is a pseudo-differential operator \cite{Taylor_1981} 
and the characteristic exponent $\Psi$ is the symbol of this operator.


\subsection{Main Results}


Consider a stable process $X$ started from $X_0=x>0$ and denote by $T_0$ the first exit time from $(0,\infty)$.  
Let  $\{P_t\}_{t\ge 0}$ be the transition semigroup of the process $X$ killed at time $T_0$: these operators are defined by
\eqref{def_semigroup}, but with $X_t$ instead of $W_t$. Then $P_t$ are sub-Markov operators, 
{and} they are contraction operators on $L^2(\r^+)$. The transition probability density $p_t(x,y)$ is defined as the integral kernel of the operator $P_t$: 
\begin{equation}\label{def_p_t_xy}
\p_x(X_t \in \d y,  T_0>t )=p_t(x,y) \d y. 
\end{equation}  
It is known that the transition density exists when $X$ is a stable process (this follows from Theorem 3 and Example 4 in \cite{chaumont2013}). 

From the analytical point of view, killing the process on the first exit from the positive half-line is equivalent to imposing a Dirichlet boundary condition on the negative half-line. Thus, in order to find the eigenfunctions of the infinitesimal generator we need to solve the equation $\gen\alpharho f(x)=\lambda f(x)$ for $x>0$ with the ``boundary condition'' $f(x)=0$ for $x\le 0$. The problem of computing $v(t,x)=P_t u(x)$ (for a suitable $u$) is equivalent to solving the following parabolic partial integro-differential equation $\partial_t v(t,x)=\gen_x\alpharho v(t,x)$ subject to (i) the ``boundary condition'' $v(t,x)=0$ for $x\le 0$ and (ii) the initial condition $v(0,x)=u(x)$. Finally, the transition probability density $p_t(x, y)$ is simply the fundamental solution to this partial integro-differential equation.

In order to present our results, we need to introduce a certain special function $S_2(z)=S_2(z;\alpha)$, called {\it the double sine function}
\cite{Kurokawa,Koyama2007204}. This function can be defined via two functional equations
\begin{equation}\label{S2_two_functional_eqns}
S_2(z+1)=\frac{S_2(z)}{2\sin(\pi z/\alpha)}, \;\;\; S_2(z+\alpha)=\frac{S_2(z)}{2\sin(\pi z)},
\end{equation}
and the normalizing condition $S_2((1+\alpha)/2)=1$. We collect several equivalent definitions and various properties of the double sine function
in Appendix \ref{AppendixA}, here we only mention the following two facts which will be used  most frequently in this paper:  
\begin{itemize}
\item[(i)] The function $S_2(z)$ is a real meromorphic function having poles at points $z \in \{m\alpha +n \; : \; m, n \in {\mathbb N}\}$;
\item[(ii)] For every $b, c\in \r$ have the following asymptotic result 
\begin{align}\label{S_2_asymptotics}
|S_2(b + \i \alpha \ln(e^{\i c}y)/(2\pi))S_2(b - \i \alpha \ln(e^{\i c}y)/(2\pi))|=
\begin{cases}
y^{1/2+\alpha/2-b}(1+o(1)), \;\;&\textnormal{ as } y \to +\infty, \\
y^{-1/2-\alpha/2+b}(1+o(1)), \;\;&\textnormal{ as } y \to 0^+. 
\end{cases}
\end{align} 
\label{properties_S2_function}
Moreover, the above asymptotic result holds uniformly in $b$ and $c$ on compact subsets of $\r$. 
\end{itemize}

Now we introduce two functions $G$ and $F$, which will play an important role in this paper. For
$(\alpha,\rho) \in {\mathcal A}$ and $x\ge 0$ we define 
\begin{equation}\label{def_g}
G(x):=\int_0^{\infty} e^{-z x} 
 z^{\alpha \rho/2-1/2} |S_2(1+\alpha+\alpha\hat\rho/2 + \i \alpha \ln(z)/(2\pi))|^2 \d z
\end{equation}
and
\begin{equation}\label{def_F_eigenfunction}
F(x):=e^{x \cos(\pi \rho)} \sin(x \sin(\pi \rho)+\pi \rho(1- \alpha \hat \rho)/2)+
\frac{\sqrt{\alpha}}{4\pi} S_2(-\alpha \hat \rho) G(x).
\end{equation}
When it will be needed to stress the dependence on the parameters $\alpha$ and $\rho$, we will write $F(x; \alpha, \rho)$ for $F(x)$ and $G(x; \alpha, \rho)$ for $G(x)$. We define $\hat{F}$ and $\hat{G}$ in a similar way, exchanging the roles of $\rho$ and $\hat{\rho}$ (so that $\hat{F}(x) = F(x; \alpha,\hat{\rho})$).

Note that the integral in \eqref{def_g} converges for $x=0$ (and thus for all $x>0$): this is easy to establish using \eqref{S_2_asymptotics}.
Another important observation is that the function $G$ is completely monotone, that  is $(-1)^n G^{(n)}(x)\ge 0$ for all $n \in {\mathbb Z}^+$ and $x>0$. Finally, we note that the function $F$ is bounded on $(0,\infty)$ if $\rho \ge 1/2$, and it grows exponentially if $\rho<1/2$ (with a similar result for $\hat F$ and $\hat \rho$).

The following theorem is our first main result in this paper: here we generalize formulas \eqref{BM1} and \eqref{BM2} which hold in the Brownian motion case.  

\begin{theorem}\label{thm_main} Let $X$ be a stable process defined by parameters $(\alpha,\rho)\in {\mathcal A}$. 
\begin{itemize}
\item[(i)] If $\alpha>1$ or $\rho\ge 1/2$ then for all $x>0$
\begin{equation}\label{eqn_first_exit_time_spectral}
\p_x(T_0>t)= \frac{\sqrt{\alpha}}{\pi} S_2(\alpha \hat \rho)  \int_0^{\infty} e^{-t\lambda^{\alpha}} F(\lambda x) \lambda^{-1} \d \lambda. 
\end{equation}
\item[(ii)] If $\alpha>1$ or $\rho=1/2$ then for all $t,x,y>0$ we have
\begin{equation}\label{Q_spectral_formula}
p_t(x,y)=\frac{2}{\pi} \int_0^{\infty} e^{-t\lambda^{\alpha}}  F(\lambda x) \hat F(\lambda y) 
\d \lambda.  
\end{equation}
\end{itemize}
\end{theorem}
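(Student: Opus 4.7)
The plan is to follow the roadmap indicated in the abstract: first express the $q$-resolvent density $r_q(x,y) := \int_0^\infty e^{-qt} p_t(x,y) \d t$ via the Wiener-Hopf factorization, and then invert the Laplace transform $q \mapsto t$ by collapsing the Bromwich contour onto the branch cut of $q^{1/\alpha}$ along $(-\infty,0]$. For part~(i) one applies the same inversion to the first-passage Laplace transform $q^{-1}(1-\e_x[e^{-qT_0}])=\int_0^\infty e^{-qt}\p_x(T_0>t)\d t$.

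First, I would make the $q$-resolvent and $\e_x[e^{-qT_0}]$ explicit. By self-similarity of $X$, the Wiener-Hopf factors $\phiqp(z)$ and $\phiqm(z)$ depend on $z/q^{1/\alpha}$ only; moreover their Mellin transforms are known in closed form in terms of the double sine function $S_2(\cdot;\alpha)$. Substituting these into the Pecherskii-Rogozin expression for $\e_x[e^{-qT_0}]$ and into the resolvent factorization through the ascending and descending ladder processes represents $r_q(x,y)$ and $\e_x[e^{-qT_0}]$ as Mellin-Barnes integrals whose kernels are products of $S_2$-factors.

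Second, I would carry out the Laplace inversion. Using the asymptotics \eqref{S_2_asymptotics}, one verifies that these integrals define holomorphic functions of $q$ on $\c\setminus(-\infty,0]$ with enough decay at infinity to collapse the Bromwich contour onto the cut. Parametrizing the two banks of the cut by $q=\lambda^\alpha e^{\pm \i \pi}$ produces an expression of the schematic form
$$ p_t(x,y)=\frac{\alpha}{\pi}\int_0^\infty e^{-t\lambda^\alpha}\im\bigl(r_{\lambda^\alpha e^{\i\pi}}(x,y)\bigr)\lambda^{\alpha-1}\d\lambda, $$
and similarly for $\p_x(T_0>t)$. The functional equations \eqref{S2_two_functional_eqns} translate the shift $q\mapsto e^{\i\pi}q$ into an imaginary shift of the $S_2$-arguments by $\i\alpha/2$, which is precisely what converts the Mellin ingredients into the modulus $|S_2(\cdots)|^2$ appearing in the definition \eqref{def_g} of $G$.

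The main obstacle will be identifying the boundary value $\im\bigl(r_{\lambda^\alpha e^{\i\pi}}(x,y)\bigr)$ with the product $F(\lambda x)\hat F(\lambda y)$ (and analogously for part~(i)). The elementary exponential-trigonometric summand in \eqref{def_F_eigenfunction} should arise from residues of poles of the $S_2$-factors crossed during the contour deformation, accounting also for the constant $S_2(-\alpha\hat\rho)$, while the $G$-term will come from the remaining integral along the cut. The parameter restrictions in the theorem reflect the convergence of the final spectral integral: when $\rho<1/2$ the function $F$ grows exponentially, so extra decay must come from the factor $\lambda^{\alpha-1}$, which is only available when $\alpha>1$; in part~(ii) both $F$ and $\hat F$ must be controlled simultaneously, forcing either $\alpha>1$ or $\rho=\hat\rho=1/2$. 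Uniform decay estimates from \eqref{S_2_asymptotics} will be needed throughout to justify each deformation and interchange of integrals, and the resulting formulas should be verified for mutual consistency by integrating \eqref{Q_spectral_formula} in $y\in(0,\infty)$ and recovering \eqref{eqn_first_exit_time_spectral}.
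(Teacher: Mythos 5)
Your overall strategy --- a Wiener--Hopf factorization underlying the $q$-resolvent, followed by a Laplace inversion in $q$ that collapses the Bromwich contour onto the cut $(-\infty,0]$, with residues and the cut contribution producing the two summands of $F$ --- matches the paper's approach in outline, and your intuition that the exponential-trigonometric part of $F$ arises from a pole crossed during the deformation while $G$ is the remaining cut integral is correct (this is exactly what Lemma~\ref{lemma_5}(iii) delivers).

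However, you label as ``the main obstacle'' precisely the step that makes the argument close, and you do not supply a way over it. Your plan integrates over the running infimum $z$ \emph{first} (inside $r_q(x,y)$) and then deforms the contour in $q$. The paper reverses the order: it keeps the factorized form
$$q^{-1}H_q(x,y,z)=q^{2/\alpha-1}\, f_{\overline X}\bigl((y-z)q^{1/\alpha}\bigr)\,f_{\underline X}\bigl((x-z)q^{1/\alpha}\bigr),$$
rotates $q$ onto the cut, and only \emph{then} integrates over $z\in(0,\min(x,y))$. After the rotation by $e^{\pi\i/\alpha}$ the densities become, via Lemma~\ref{lemma_5}(iii),
$$e^{\pi\i/\alpha}f_{\overline X}(e^{\pi\i/\alpha}x)=\tfrac{2}{\sqrt\alpha}\,S_2(1+\alpha\rho)\bigl(\hat F(x)+e^{\pi\i\rho}\hat F'(x)\bigr),$$
with the analogous formula for $f_{\underline X}$, so the imaginary part of the rotated product is a constant times $\frac{\d}{\d z}\bigl[F(x-z)\hat F(y-z)\bigr]$; integrating in $z$ and using $F(0^+)=\hat F(0^+)=0$ (Lemma~\ref{lemma_3}) telescopes to $\tfrac{2}{\alpha}F(x)\hat F(y)$ (Corollary~\ref{corollary_lucky_integral}). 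That structural identity is what produces the product $F(\lambda x)\hat F(\lambda y)$ in \eqref{Q_spectral_formula}, and it is invisible once $z$ has been integrated out: the boundary value of $r_q(x,y)$ along the cut is not in an obviously factored form, so ``identifying it with the product'' is not a matter of chasing more residues. Similarly, for part~(i) the paper simply writes $\p_x(T_0>t)=\p_0(-\underline X_t<x)$ and inverts the Laplace transform of the one-dimensional density of $-\underline X_t$ (no Pecherskii--Rogozin formula is needed), again using Lemma~\ref{lemma_5}(iii) to recognize $F$.

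A small inaccuracy: the hypothesis ``$\alpha>1$ or $\rho\ge 1/2$'' (resp. ``$\rho=1/2$'') is not a consequence of the power $\lambda^{\alpha-1}$ providing decay. It is needed to justify the deformation to Hankel's contour: Corollary~\ref{corollary2} shows that the rotated density $f_{\overline X}(q^{1/\alpha})$ grows like $\exp(|q|^{1/\alpha})$ near the cut precisely when $\rho>1/2$ (and $f_{\underline X}$ when $\rho<1/2$), and the factor $e^{qt}$ overcomes this only when $1/\alpha<1$. Part~(i) involves $f_{\underline X}$ alone, hence the weaker restriction $\rho\ge 1/2$; part~(ii) involves both factors, forcing $\rho=1/2$ unless $\alpha>1$.
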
 

In the symmetric case Theorem \ref{thm_main} was established in \cite{Kwasnicki2011} (see Example 6.1), and in the non-symmetric case the eigenfunctions $F(x)$ were computed in \cite{Kwasnicki2013} (in both papers the function $G(x)$ was given in an equivalent integral form). 

Our next goal is to study the transition semigroup $P_t$ and to establish an analogue of identity \eqref{BM3}. In the case of non-symmetric stable processes the situation is bound to be more complicated than in the symmetric case, since one of the functions $F$, $\hat F$ is exponentially increasing. In order to properly define the operators which diagonalize the transition semigroup, we first need to introduce a suitable space of test functions. 

\begin{definition}\label{def_set_X}
Set $\zeta:=\frac{\pi}{2}\min(1,1/\alpha)$. Let us denote by $\ex{\alpha}$ the set of functions $u(x)$ satisfying the following conditions
\begin{itemize}
\item[(i)] $u(x)$ is analytic in the sector $|\arg(x)|<\zeta$ and it takes real values on $(0,\infty)$; 
\item[(ii)] For every $\epsilon \in (0,\zeta)$ there exists $\delta=\delta(\epsilon)>0$  such that 
$|u(x)|=O(|x|^{-\delta |x|})$ as $|x|\to \infty$ and $|u(x)| = O(1)$ as $|x| \to 0$ (uniformly in the sector $|\arg(x)|<\zeta-\epsilon$).  
\end{itemize}
\end{definition}
{We need the above conditions to extend the Fourier--Laplace transform of $u$ to a well-behaved analytic function in the sector $|\arg(x)| < \frac{\pi}{2} + \zeta$. This is done in Lemma~\ref{lemma6}.}

The sets $\ex{\alpha}$ are non-empty: for example, $u(x)=(1+x)^{-x} \in \ex{1}$
and $v(x)=\exp(-x^{\alpha}) \in \ex{\beta}$ for $\beta\ge \alpha>1$ (but not for $\beta<\alpha$ or $\alpha\le 1$). 
It is clear that $\ex{\alpha}\equiv \ex{1}$ for $\alpha \le 1$ and $\ex{\alpha} \subset \ex{\beta}$ for $1\le \alpha<\beta\le 2$. The following properties follow easily from the definition: if $u$ and $v$ belong to $\ex{\alpha}$ then the same is true for functions
\[
\begin{aligned}
{\textnormal{(i)}} \;\;& u(x)v(x); \\
{\textnormal{(ii)}} \;\;& au(x)+bv(x) && \text{for all $a,b \in \r$;} \\
{\textnormal{(iii)}} \;\;& u(ax+b) && \text{for all $a>0$,$b\ge 0$;} \\
{\textnormal{(iv)}} \;\;& x^a e^{-b x^{\alpha}} u(x) && \text{for all $a\ge 0$, $b\ge 0$.} 
\end{aligned}
\]
We also record here the following important property:
\[
\textnormal{(v)} \; \text{The restrictions $u \vert_{\r^+}$ of $u \in \ex{\alpha}$ are dense in $L^2(\r^+)$}.
\]
The above property is easy to prove. Indeed, assuming that there exists $v \in L^2(\r^+)$ which is orthogonal to all $u \in \ex{\alpha}$, we obtain $\int_0^\infty v(x) (1 + x)^{-a x} dx = 0$ for all $a > 0$. Substituting $e^y$ for $(1 + x)^{x}$ and writing $w(y) = v(x) \frac{\d y}{\d x}$, we see that $\int_1^\infty w(y) e^{-a y} dy = 0$. Therefore, $w(y) = 0$ for almost all $y > 1$, and so $v(x) = 0$ for almost all $x > 0$.

Next, we define the following integral operators, which generalize Fourier sine transform  
\eqref{def_Fourier_sine_transform}:
\begin{equation}\label{def_operators}
\Pi u(\lambda)=\sqrt{2/\pi}\int_0^{\infty} F( \lambda x) u(x) \d x, \;\;\; 
\hat \Pi u(\lambda)=\sqrt{2/\pi}\int_0^{\infty} \hat F(\lambda x) u(x) \d x,
\end{equation}
where $u: (0,\infty) \mapsto \c$ is bounded and has compact support. 
We denote by $\hat P_t$ the transition semigroup of the dual process $\hat X$, killed on the first exit from $(0,\infty)$. According to Hunt's switching identity (see \cite[Theorem 5]{Bertoin}), the density of the kernel of the dual semigroup is given by $\hat p_t(x,y)=p_t(y,x)$, thus $\hat P_t$ is simply the adjoint operator of $P_t${, when $P_t$ and $\hat P_t$ are considered as operators acting on $L^2(\r^+)$}.

The next theorem is our second main result.  
\begin{theorem}\label{theorem_2}
Assume that $(\alpha,\rho)\in {\mathcal A}$ and $\rho \ge 1/2$. 
\begin{itemize}
\item[(i)] $\Pi$ can be extended to a bounded self-adjoint operator $\Pi: L^2(\r^+) \mapsto L^2(\r^+)$ and $\hat\Pi: \ex{\alpha} \mapsto  L^2(\r^+)$ is a symmetric operator such that $\hat\Pi \ex{\alpha}$ is dense in $L^2(\r^+)$. For all $u \in \ex{\alpha}$ we have
\begin{equation}\label{eqn_Pi_hat_Pi}
\Pi \hat \Pi u= u.  
\end{equation}
\item[(ii)] For $u \in \ex{\alpha}$ and $t>0$ we have
\begin{equation}\label{spectral_formula_2}
P_t u=  \Pi   e^{-t \lambda^{\alpha}}  \hat\Pi u, 
\end{equation}
and
\begin{equation}\label{spectral_formula_2_dual}
 \Pi \hat P_t \hat\Pi u= e^{-t \lambda^{\alpha}} u. 
\end{equation}
\end{itemize}
\end{theorem}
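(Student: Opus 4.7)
The heart of the proof is Theorem~\ref{thm_main}(ii), which I would turn into the operator identity \eqref{spectral_formula_2} by Fubini. For $u\in\ex{\alpha}$, the super-exponential decay $|u(x)|=O(x^{-\delta x})$ dominates the at-most-exponential growth $|\hat F(\lambda x)|=O(e^{c\lambda x})$ (with $c=\cos(\pi\hat\rho)\ge 0$, since $\rho\ge 1/2$), so $\hat\Pi u(\lambda)$ is well-defined for each $\lambda>0$. Using analyticity of $u$ in the sector $|\arg x|<\zeta$ and rotating the contour in the defining integral for $\hat\Pi u$, I would establish the rapid decay $|\hat\Pi u(\lambda)|=O(\lambda^{-N})$ as $\lambda\to\infty$ for every $N\ge 0$, with boundedness near $\lambda=0$. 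These estimates, together with $|F(\lambda x)|\le C$ (bounded precisely because $\rho\ge 1/2$), ensure that the triple integral in $P_t u(x)=\int_0^\infty p_t(x,y)u(y)\,\d y$ converges absolutely. Fubini then delivers $P_t u=\Pi\bigl(e^{-t\lambda^\alpha}\hat\Pi u\bigr)$, which is \eqref{spectral_formula_2}.

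Letting $t\to 0^+$ in \eqref{spectral_formula_2} yields the inversion $\Pi\hat\Pi u=u$: the left-hand side tends to $u(x)$ by the usual pointwise continuity of the killed semigroup on continuous $u\in\ex{\alpha}$, and the right-hand side tends to $\Pi\hat\Pi u(x)$ by dominated convergence in $\lambda$, with dominant $|F(\lambda x)\hat\Pi u(\lambda)|\in L^1(\d\lambda)$ provided by the rapid decay of $\hat\Pi u$. The symmetry $\langle\hat\Pi u,v\rangle=\langle u,\hat\Pi v\rangle$ for $u,v\in\ex{\alpha}$ is a direct Fubini computation. For the density of $\hat\Pi\ex{\alpha}$ in $L^2(\r^+)$, I would use a truncation/duality argument: if $w\in L^2(\r^+)$ is orthogonal to $\hat\Pi\ex{\alpha}$, truncating $w$ to $[1/n,n]$ permits Fubini against $u\in\ex{\alpha}$ and gives, after a limiting argument, $\int_0^\infty u(x)V(x)\d x=0$ for all $u\in\ex{\alpha}$, so the density of $\ex{\alpha}$ in $L^2(\r^+)$ forces $w=0$.

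The principal difficulty is showing that $\Pi$ extends to a bounded operator on $L^2(\r^+)$; self-adjointness is then automatic, because the kernel $\sqrt{2/\pi}\,F(\lambda x)$ is real and symmetric in $(\lambda,x)$. My plan is to exploit the decomposition \eqref{def_F_eigenfunction}. The oscillatory summand $e^{x\cos(\pi\rho)}\sin(x\sin(\pi\rho)+\pi\rho(1-\alpha\hat\rho)/2)$ is modulated by $e^{x\cos(\pi\rho)}$ with $\cos(\pi\rho)\le 0$ (since $\rho\ge 1/2$) and produces an operator comparable to a linear combination of the classical sine and cosine transforms on $\r^+$, hence $L^2$-bounded. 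The $G$-summand is an integral operator with a completely monotone kernel, and can be controlled either by a Schur test with weights $\sqrt{x}$ and $\sqrt{\lambda}$ via the asymptotics \eqref{S_2_asymptotics} applied to the representation \eqref{def_g}, or more cleanly through Mellin analysis, where $\Pi$ diagonalizes into multiplication by a bounded symbol built from quotients of double sine functions. I expect this $L^2$-boundedness step to be the main obstacle, and the Mellin-diagonalization route to be the most transparent, since it ties the boundedness directly to known estimates on $S_2$.

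Finally, \eqref{spectral_formula_2_dual} follows from \eqref{spectral_formula_2}, the identity $\Pi\hat\Pi=I$ on $\ex{\alpha}$, and Hunt's switching identity $\hat P_t=P_t^*$: taking adjoints in \eqref{spectral_formula_2} and using $\Pi^*=\Pi$ together with the symmetry of $\hat\Pi$ yields $\hat P_t=\hat\Pi\,M_{e^{-t\lambda^\alpha}}\,\Pi$ on the relevant domain, and composing on the left with $\Pi$ and on the right with $\hat\Pi$ while invoking $\Pi\hat\Pi u=u$ gives $\Pi\hat P_t\hat\Pi u=e^{-t\lambda^\alpha}u$.
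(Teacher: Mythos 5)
Your plan is to deduce \eqref{spectral_formula_2} by applying Fubini to $P_t u(x)=\int_0^\infty p_t(x,y)u(y)\,\d y$ with $p_t(x,y)$ given by \eqref{Q_spectral_formula}. The paper does exactly this, but only for $\rho=1/2$; for $\rho>1/2$ it switches to a different argument, and there are two concrete reasons why your route cannot work in that range.

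First, a scope issue: Theorem~\ref{thm_main}(ii) is established only when $\alpha>1$ or $\rho=1/2$. The parameter range $\alpha\le 1$, $\rho\in(1/2,1)$ is covered by Theorem~\ref{theorem_2} but not by Theorem~\ref{thm_main}(ii), so there is no formula for $p_t(x,y)$ to feed into Fubini there.

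Second, and more fundamentally, even when $\alpha>1$ and $\rho>1/2$, the claim that ``the triple integral converges absolutely'' is false. You correctly observe that $\hat\Pi u(\lambda)$ is well-defined and decays in $\lambda$, and hence that the iterated integral $\int_0^\infty F(\lambda x)\,e^{-t\lambda^\alpha}\,\hat\Pi u(\lambda)\,\d\lambda$ converges. But that decay comes from cancellations (obtained by contour rotation), and Fubini requires \emph{absolute} integrability of $(\lambda,y)\mapsto e^{-t\lambda^\alpha}\,F(\lambda x)\hat F(\lambda y)\,u(y)$. With $\rho>1/2$ one has $\hat F(\lambda y)\sim e^{c\lambda y}$ for $c=\cos(\pi\hat\rho)>0$, and along the ray $\lambda=y$ the absolute value of the integrand behaves like $\exp\bigl(-t y^\alpha + c y^2 - \delta y\ln y\bigr)$. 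Since $\rho\ge 1/2$ together with $\rho<1/\alpha$ forces $\alpha<2$, the term $c y^2$ dominates and the integrand blows up; the double integral diverges. You have at best shown that both iterated integrals exist, which without absolute convergence does not force them to be equal.

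This is precisely why the paper, for $\rho>1/2$, goes through the resolvent in the time variable. It works with $K_q(x,z)=q^{2/\alpha-1}f_{\underline X}\bigl((x-z)q^{1/\alpha}\bigr)\,\Xi\mathcal S_z u\bigl(q^{1/\alpha}\bigr)$, where the contour-rotated bound for $\Xi\mathcal S_r u$ from Lemma~\ref{lemma8}(i) is uniform in the shift $r\ge 0$; the Hankel-contour deformation (Lemma~\ref{lemma_2}) is applied to $q\mapsto K_q(x,z)$; and only afterwards is Fubini invoked, in the $(z,q)$-variables, where the bound \eqref{eqn_Kq_bound3} genuinely gives absolute integrability. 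The evaluation of the resulting $z$-integral then uses Corollary~\ref{corollary_lucky_integral}. None of this machinery appears in your proposal, and it is exactly what replaces the failing Fubini step. Your later steps --- Mellin diagonalization for boundedness of $\Pi$, the $t\to 0^+$ argument giving $\Pi\hat\Pi u=u$, and the adjoint computation giving \eqref{spectral_formula_2_dual} --- are in line with the paper's Lemma~\ref{lemma9} and the end of its proof, but they all rest on \eqref{spectral_formula_2}, so the gap above is the one that must be closed.
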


Note that in the symmetric case $\rho=1/2$ we have $\Pi=\hat \Pi$, and then formula \eqref{eqn_Pi_hat_Pi} implies that $\Pi$ is an isometry on $L^2(\r^+)$. This result was established in greater generality in \cite{Kwasnicki2011}. 

\begin{corollary}\label{corollary1}
Assume that $(\alpha,\rho)\in {\mathcal A}$, $\rho \ge 1/2$ and $\lambda>0$. 
\begin{itemize}
\item[(i)] The functions $u_{\lambda}(x):=F(\lambda x)$ are the eigenfunctions of the semigroup $P_t$, that is
$P_t u_{\lambda}=e^{-t\lambda^{\alpha}} u_{\lambda}$. 
\item[(ii)] The functions $\hat F(\lambda x)$ are the eigenfunctions of the dual semigroup $\hat P_t$, in the sense that 
for all $v\in \ex{\alpha}$ we have $\hat P_t \hat \Pi v=\hat \Pi e^{-t \lambda^{\alpha}} v$.
\end{itemize}
\end{corollary}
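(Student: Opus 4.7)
The plan is to derive part~(ii) directly from Theorem~\ref{theorem_2} and then deduce part~(i) from part~(ii) by exploiting the duality between $P_t$ and $\hat P_t$.

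For part~(ii), fix $v\in\ex{\alpha}$ and set $w(\mu):=e^{-t\mu^{\alpha}}v(\mu)$. By property~(iv) of~$\ex{\alpha}$, the function $w$ again lies in~$\ex{\alpha}$. On the one hand, \eqref{spectral_formula_2_dual} applied to~$v$ gives $\Pi(\hat P_t\hat\Pi v)=w$; on the other, \eqref{eqn_Pi_hat_Pi} applied to~$w$ gives $\Pi(\hat\Pi w)=w$. Both $\hat P_t\hat\Pi v$ and $\hat\Pi w$ lie in $L^2(\r^+)$ (the former because $\hat P_t$ is an $L^2$-contraction and $\hat\Pi v\in L^2(\r^+)$; the latter by Theorem~\ref{theorem_2}(i)), so by injectivity of~$\Pi$ we conclude that $\hat P_t\hat\Pi v=\hat\Pi w$, which is exactly~(ii). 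Injectivity of~$\Pi$ comes from self-adjointness together with the identity $\Pi\hat\Pi\ex{\alpha}=\ex{\alpha}$ in~\eqref{eqn_Pi_hat_Pi}: the range of~$\Pi$ contains the dense subspace~$\ex{\alpha}$, so $\ker\Pi=(\operatorname{range}\Pi)^{\perp}=\{0\}$.

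For part~(i), the hypothesis $\rho\ge 1/2$ guarantees that $F$ is bounded on~$(0,\infty)$, so $u_{\lambda}(x)=F(\lambda x)$ is bounded and $P_tu_{\lambda}(x)=\int_0^{\infty}p_t(x,y)F(\lambda y)\d y$ is well defined. Given $v\in\ex{\alpha}$, Fubini's theorem together with Hunt's switching identity $\hat p_t(y,x)=p_t(x,y)$ yields
\[
\int_0^{\infty}P_tu_{\lambda}(x)\,\hat\Pi v(x)\,\d x=\int_0^{\infty}F(\lambda y)\,\hat P_t\hat\Pi v(y)\,\d y.
\]
By part~(ii), $\hat P_t\hat\Pi v=\hat\Pi(e^{-t(\cdot)^{\alpha}}v)$, and the right-hand side therefore equals $\sqrt{\pi/2}\,\Pi\hat\Pi(e^{-t(\cdot)^{\alpha}}v)(\lambda)=\sqrt{\pi/2}\,e^{-t\lambda^{\alpha}}v(\lambda)$ by~\eqref{eqn_Pi_hat_Pi}. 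This matches $\int_0^{\infty}e^{-t\lambda^{\alpha}}u_{\lambda}(x)\hat\Pi v(x)\d x$, computed by the same identity. Hence $P_tu_{\lambda}-e^{-t\lambda^{\alpha}}u_{\lambda}$ is orthogonal in the integral pairing to the dense subspace $\hat\Pi\ex{\alpha}\subset L^2(\r^+)$, and therefore vanishes identically.

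\textbf{Main obstacle.} The delicate step is justifying the Fubini interchange and the $L^2$ pairing above when $\rho=1/2$, since in that case the first term of~$F$ merely oscillates and $u_{\lambda}\notin L^2(\r^+)$. However, when $\rho=1/2$ one has $P_t=\hat P_t$ and $F=\hat F$, so the conclusion reduces to the symmetric case already treated in~\cite{Kwasnicki2011}. For $\rho>1/2$ we have $\cos(\pi\rho)<0$, so the exponential factor in~\eqref{def_F_eigenfunction} decays and $u_{\lambda}\in L^2(\r^+)\cap L^{\infty}(\r^+)$, which removes all convergence concerns and validates the argument above.
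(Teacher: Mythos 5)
Your argument is correct but takes a different route from the paper's. For part (ii), the paper establishes $\hat P_t\hat\Pi w=\hat\Pi e^{-t\lambda^\alpha}w$ directly as a step inside the proof of Theorem~\ref{theorem_2}, via the inner-product chain \eqref{L2_computation1} and density of $\ex{\alpha}$, and then applies $\Pi$ to obtain \eqref{spectral_formula_2_dual}. You run this in reverse: take \eqref{spectral_formula_2_dual} as given and recover part (ii) by injectivity of $\Pi$, which you correctly derive from self-adjointness plus dense range $\Pi\hat\Pi\ex{\alpha}=\ex{\alpha}$; that is a valid and slightly cleaner derivation at the corollary level. For part (i), the paper first proves the $L^2$-operator identity $P_t\Pi v=\Pi e^{-t\lambda^\alpha}v$ by an adjoint computation, then applies Fubini to the kernel of $\Pi$ (needing only boundedness of $F$) to read off $P_tu_\lambda=e^{-t\lambda^\alpha}u_\lambda$ a.e., and finishes by continuity. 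Your version instead pairs $P_tu_\lambda$ directly against $\hat\Pi v$; this requires $u_\lambda\in L^2(\r^+)$, so it only works cleanly when $\rho>1/2$, and you defer $\rho=1/2$ to~\cite{Kwasnicki2011}. That is acceptable, but the paper's route is uniform in $\rho\ge 1/2$ precisely because it never needs $u_\lambda$ itself to be square-integrable, only bounded. One minor gap: for $\rho>1/2$ you justify $u_\lambda\in L^2$ by the decay of the exponential factor in~\eqref{def_F_eigenfunction}, but this alone is not sufficient; one also needs the completely monotone term $G$ to decay, which it does at rate $x^{-\alpha-1}$ (from the $z^\alpha$ behavior of the integrand in~\eqref{def_g} near zero), so $F(x)=O(x^{-\alpha-1})$ as $x\to\infty$, which is polynomial rather than exponential but still gives $F\in L^2$.
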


{In part~(i) of the above theorem, $u_\lambda$ is a bounded function, so that it is a true eigenfunction of the operator $P_t$ acting on the $L^\infty(\r^+)$ space. If $\rho > 1/2$, then it is easy to see that $u_\lambda(x) = O(x^{-1-\alpha})$ as $x \to \infty$ (by~\eqref{S_2_asymptotics}, \eqref{def_g} and Karamata's theorem), and so $u_\lambda$ is in fact in $L^2(\r^+)$. However, when $\rho = 1/2$, $u_\lambda(x)$ oscillates between $\pm 1$ as $x \to \infty$, and so it is not in $L^2(\r^+)$.}

{On the other hand, unless $\rho = 1/2$, the word \emph{eigenfunction} is used in a rather vague sense in part~(ii) of the theorem. The function $\hat{u}_\lambda(x) = \hat F(\lambda x)$ oscillates as $x \to \infty$ with magnitude that grows exponentially fast, and so there is no obvious way in which $\hat P_t \hat{u}_\lambda$ can be defined.}

{We remark that since $P_t$ and $\hat P_t$ are Markovian, they are contractions on $L^\infty(\r^+)$. By duality, they are also contractions on $L^1(\r^+)$, and interpolating between $L^1(\r^+)$ and $L^\infty(\r^+)$ one easily finds that in fact $P_t$ and $\hat P_t$ are contractions on $L^p(\r^+)$ for $p \in [1, \infty]$. We also remark that the semigroups $\{P_t\}_{t \ge 0}$, $\{\hat P_t\}_{t \ge 0}$ are strongly continuous on $L^p(\r^+)$ if $p \in [1, \infty)$: this can be proved by comparing $P_t$ or $\hat{P}_t$ with transition operators of the non-killed process, for which the corresponding result is standard.}

Our third main result gives Laplace and  Mellin transforms of the eigenfunctions. 
\begin{theorem}\label{thm_F_Laplace_Mellin} Assume that $(\alpha,\rho)\in {\mathcal A}$. 
\begin{itemize}
\item[(i)]
For $z>\max(0,\cos(\pi \rho))$ 
\begin{equation}\label{eqn_F_Laplace}
\int_0^{\infty} e^{-zx} F(x)\d x=\frac{\sqrt{\alpha}}{2} S_2(\alpha \rho) \, z^{-\alpha\hat \rho/2-1/2} 
|S_2(1+\alpha/2+\alpha\hat \rho/2 + \i \alpha \ln(z)/(2\pi))|^2.
\end{equation}
The corresponding result for $\hat F(x)$ can be obtained from the above formula by replacing $\rho \mapsto \hat \rho$. 
\item[(ii)] If  $\rho \ge 1/2$ then for $\re(z) \in (-\alpha \hat \rho, 0)$
\begin{equation}\label{eqn_Mellin_F}
\int_0^{\infty} x^{z-1} F(x) \d x=
\frac{ \Gamma(z)  S_2(z)}{2 S_2(\alpha \hat \rho+z)}. 
\end{equation}
\end{itemize}
\end{theorem}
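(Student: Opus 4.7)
The plan is to split $F = F_1 + \tfrac{\sqrt{\alpha}}{4\pi}\,S_2(-\alpha\hat\rho)\,G$, with
\[
F_1(x) := e^{x\cos(\pi\rho)}\sin\bigl(x\sin(\pi\rho) + \pi\rho(1-\alpha\hat\rho)/2\bigr),
\]
to treat the two summands of $F$ separately for part (i), and then to read off part (ii) from (i) via a Mellin--Laplace conversion.

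\emph{Part (i).} The Laplace transform of $F_1$ is elementary: writing $F_1(x) = \im\bigl(e^{x e^{\i\pi\rho}}\,e^{\i\pi\rho(1-\alpha\hat\rho)/2}\bigr)$ gives
\[
\int_0^\infty e^{-zx}F_1(x)\,\d x = \im\!\left(\frac{e^{\i\pi\rho(1-\alpha\hat\rho)/2}}{z-e^{\i\pi\rho}}\right), \qquad \re(z)>\cos(\pi\rho).
\]
For $G$, Fubini applied to \eqref{def_g} (and justified by \eqref{S_2_asymptotics}) yields
\[
\int_0^\infty e^{-zx}G(x)\,\d x = \int_0^\infty \frac{u^{\alpha\rho/2-1/2}\,|S_2(1+\alpha+\alpha\hat\rho/2 + \i\alpha\ln u/(2\pi))|^2}{z+u}\,\d u.
\]
The substitution $u = e^{2\pi t/\alpha}$ turns the right-hand side into an integral along $\im(t)=0$ of $e^{\pi t(\alpha\rho+1)/\alpha}\,|S_2(b+\i t)|^2/(z+e^{2\pi t/\alpha})$, up to the factor $2\pi/\alpha$, with $b = 1+\alpha+\alpha\hat\rho/2$. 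I shift this contour upward into the strip $\im(t) \in (\alpha\hat\rho/2,\alpha/2)$, with the tails controlled by \eqref{S_2_asymptotics}. The only pole crossed is that of $S_2(b+\i t)$ at $t = \i\alpha\hat\rho/2$ (coming from the pole of $S_2$ at $\alpha+1$), and its residue evaluates, by repeated use of \eqref{S2_two_functional_eqns}, to exactly $-(4\pi/\sqrt{\alpha})\,S_2(-\alpha\hat\rho)^{-1}$ times the Laplace transform of $F_1$ computed above. After this cancellation, the remaining line integral is reorganized by one additional change of variable and one further application of \eqref{S2_two_functional_eqns} into the closed form on the right of \eqref{eqn_F_Laplace}.

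\emph{Part (ii).} The hypothesis $\rho \ge 1/2$ forces $\cos(\pi\rho) \le 0$, so $F_1$ is bounded (and decays exponentially when $\rho > 1/2$), and a direct asymptotic check using \eqref{S_2_asymptotics} shows that the Mellin transform of $F$ converges on the strip $-\alpha\hat\rho < \re(z) < 0$. In this strip the Mellin--Laplace conversion
\[
\int_0^\infty x^{z-1}F(x)\,\d x = \frac{1}{\Gamma(1-z)}\int_0^\infty w^{-z}\,\tilde{F}(w)\,\d w,\qquad \tilde{F}(w):=\int_0^\infty e^{-wx}F(x)\,\d x,
\]
is valid. I substitute the formula from part (i) for $\tilde{F}(w)$ and set $w = e^{2\pi t/\alpha}$; the right-hand side becomes a Plancherel-type (Mellin--Barnes) integral
\[
\int_{-\infty}^\infty e^{\gamma t}\,|S_2(1+\alpha/2+\alpha\hat\rho/2 + \i t)|^2\,\d t
\]
for a specific $\gamma$ depending linearly on $z$. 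This integral evaluates in closed form to a ratio of $S_2$ values via the Barnes-type identity for the double sine function collected in Appendix \ref{AppendixA}; combining with $1/\Gamma(1-z)$ and simplifying through \eqref{S2_two_functional_eqns} together with the reflection formula $\Gamma(z)\Gamma(1-z) = \pi/\sin(\pi z)$ produces the claimed expression $\Gamma(z)\,S_2(z)/(2\,S_2(\alpha\hat\rho+z))$.

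\emph{Main obstacle.} The core technical step is the closed-form evaluation of the Mellin--Barnes integrals of $|S_2|^2$ — twisted by the denominator $z+e^{2\pi t/\alpha}$ in (i), and in pure form in (ii). Both rely on the precise identities and asymptotics for the double sine function recorded in Appendix \ref{AppendixA}. In (i), the exact cancellation between the residue at $t=\i\alpha\hat\rho/2$ and the elementary Laplace transform of $F_1$ — which is what makes the decomposition $F = F_1 + \textnormal{const}\cdot G$ produce such a clean answer — must be checked by hand from the functional equations \eqref{S2_two_functional_eqns}, and this is the most delicate bookkeeping in the argument.
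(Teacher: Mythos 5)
Your overall plan for part (i) — decompose $F = F_1 + \tfrac{\sqrt{\alpha}}{4\pi}S_2(-\alpha\hat\rho)\,G$, compute $\mathcal{L}F_1$ elementarily, turn $\mathcal{L}G$ into a Mellin--Barnes integral by the logarithmic change of variable, and shift the contour — is a legitimate ``forward'' mirror of the paper's argument, which instead proves $F = \mathcal{L}^{-1}f$ by pushing the Bromwich integral $\frac{1}{2\pi\i}\int_{c+\i\r}f(z)e^{zx}\,\d z$ onto Hankel's contour (Lemma~\ref{lemma_2}) and identifying the residues at $z_\pm = e^{\pm\pi\i\rho}$ with the $F_1$ term.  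Your part (ii) is essentially identical to the paper's: apply the Mellin--Laplace conversion and then the b-beta integral \eqref{eqn_tau_binomial}.

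The gap is in your residue computation in part (i), and it is not merely a bookkeeping detail.  Pushing the contour from $\im(t)=0$ to $\im(t)=c$ with $c\in(\alpha\hat\rho/2,\alpha/2)$ crosses a single pole, at $t_0 = \i\alpha\hat\rho/2$.  Working it out (with $\mathrm{Res}_{w=1+\alpha}S_2(w) = -\sqrt{\alpha}/(2\pi)$, $S_2(1+\alpha+\alpha\hat\rho) = 1/S_2(-\alpha\hat\rho)$ from \eqref{S2_reflection_formula}, and $e^{2\pi t_0/\alpha} = -e^{-\pi\i\rho}$), the contribution $2\pi\i\,\mathrm{Res}_{t=t_0}$ equals
\[
-\frac{2\pi\i}{\sqrt{\alpha}\,S_2(-\alpha\hat\rho)}\cdot\frac{e^{-\pi\i\rho(1-\alpha\hat\rho)/2}}{z-e^{-\pi\i\rho}}
= -\frac{2\pi\i\,\overline{A}}{\sqrt{\alpha}\,S_2(-\alpha\hat\rho)},
\qquad
A := \frac{e^{\pi\i\rho(1-\alpha\hat\rho)/2}}{z-e^{\pi\i\rho}},
\]
using the identity $\hat\rho(\alpha\rho+1)/2 + \rho(1-\alpha\hat\rho)/2 = 1/2$.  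This is a genuinely complex number; it is \emph{not} equal to $-(4\pi/\sqrt{\alpha})S_2(-\alpha\hat\rho)^{-1}\,\mathcal{L}F_1(z)$, which is real (recall $\mathcal{L}F_1(z) = \im(A)$).  Indeed, the real part of the residue contribution is $-(2\pi/\sqrt{\alpha})S_2(-\alpha\hat\rho)^{-1}\,\mathcal{L}F_1(z)$ — half the magnitude you claim — and the imaginary part is a nonzero multiple of $\re(A)$.  Consequently, after multiplying by $\tfrac{\sqrt{\alpha}}{4\pi}S_2(-\alpha\hat\rho)$ and adding $\mathcal{L}F_1$, the $F_1$ piece does not cancel; what survives is $A/(2\i)$ plus the shifted line integral, and the argument as written does not close.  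The trouble is structural: a one-sided upward shift sees only one of a conjugate pair of poles, whereas $\mathcal{L}F_1$ arises from both.  The paper sidesteps this by working with $\mathcal{L}^{-1}f$ and Lemma~\ref{lemma_2}, where the real part of twice the residue at $z_+$ automatically reproduces the full $e^{x\cos(\pi\rho)}\sin(\cdot)$ term.  To repair your route you would need either a symmetric deformation picking up both $t=\pm\i\alpha\hat\rho/2$ with a real reorganization of the remaining line integrals, or to push the contour past the denominator pole at $\im(t)=\alpha/2$ and keep track of its residue as well; in either case the claimed ``exact cancellation'' at a single pole must be abandoned.  A related point worth making explicit: on the shifted line $|S_2(b+\i t)|^2$ must be read as the analytic continuation $S_2(b+\i t)S_2(b-\i t)$, which is what creates the second (lower-half-plane) pole in the first place.
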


The two formulas \eqref{eqn_F_Laplace} and \eqref{eqn_Mellin_F} can lead to many useful explicit results about the eigenfunctions. 
For example, when $\alpha$ is irrational, one could use the methods of \cite{Hackmann,HubKuz2011,Kuz2011} and obtain 
complete asymptotic expansions of $F(x)$ as $x\to 0^+$ or $x\to +\infty$, as well as convergent power series representations. 
We leave this investigation for future work, and in this paper we only include the discussion of Doney classes in Section \ref{Section_Doney_classes}: as we will see, in this case all expressions involving the double-sine function reduce to simple finite products. 

The paper is organized as follows. In Section \ref{subsection_21} we review some results from the fluctuation theory of L\'evy processes that will be required later on, and we also outline the plan for proving our main results. In Section  \ref{section_preliminary_Results} we study the Wiener-Hopf factors of stable processes and establish some preliminary results. In  Sections 
 \ref{subsection_proof_Thm3}, \ref{subsection_proof_thm_main} and \ref{section_proof_thm_2}
we prove Theorems \ref{thm_F_Laplace_Mellin}, \ref{thm_main} and \ref{theorem_2}, respectively. In Section \ref{Section_Doney_classes} we simplify all of our formulas in the case when the process belongs to one of Doney classes, 
while the special case of spectrally one-sided processes is treated in Section \ref{section_one_sided}. 
Finally, in Section \ref{section_conclusion} we present some concluding remarks.

\section{Proofs of Theorems \ref{thm_main}, \ref{theorem_2} and \ref{thm_F_Laplace_Mellin}}
\label{section_proofs_thm123}

\subsection{The plan for proving our main results}\label{subsection_21}

Our proofs are based on the Wiener-Hopf factorization, which is a key result in the fluctuation theory of L\'evy processes 
\cite{Bertoin, Kyprianou}. We present this result here for the sake of completeness.  
We start with a L\'evy process $X$ (a one-dimensional process with stationary and independent increments) 
and we denote by 
$$
{\overline  X}_{t}=\sup\{ X_s \, :\, 0 \le s \le t\}, \;\;\; 
{\underline X}_t=\inf\{ X_s \, : \,  0 \le s \le t\}
$$
the running supremum/infimum processes. We will denote by $\ee(q)$ 
an exponential random variable with expected value $1/q$, and assume that $\ee(q)$ is independent of the process $X$. The following result is contained in Theorem 6.15(i) and identity (6.28) in \cite{Kyprianou}:  
\begin{theorem}[The Wiener-Hopf factorization]\label{theorem_WHF}
Let $X$ be a L\'evy process started from zero. Then for $q>0$
\begin{itemize}
\item[(i)] the random variable ${\overline X}_{\ee(q)}$ is independent of $X_{\ee(q)}-{\overline X}_{\ee(q)}$;
\item[(ii)] the random variables $X_{\ee(q)}-{\overline X}_{\ee(q)}$ and ${\underline X}_{\ee(q)}$ have the same distribution. 
\end{itemize}
\end{theorem}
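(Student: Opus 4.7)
The plan is to derive both parts from the memoryless property of $\ee(q)$ combined with two classical ingredients from L\'evy-process fluctuation theory: \emph{time-reversal duality} on a fixed time interval, which states that for every $t>0$ the reversed path $\tilde X_s := X_t - X_{(t-s)-}$, $s\in[0,t]$, has the same law as $(X_s)_{s\in[0,t]}$; and a path decomposition at the last time the supremum is attained, which is best handled via It\^o's excursion theory for the reflected process $\overline X - X$.

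Part (ii) follows almost immediately from duality. A direct computation gives $\tilde X_t = X_t$ and $\overline{\tilde X}_t = \max_{s\le t}(X_t - X_{(t-s)-}) = X_t - \underline X_t$, so
\[
X_t-\overline X_t \;\stackrel{d}{=}\; \tilde X_t-\overline{\tilde X}_t \;=\; \underline X_t.
\]
Since $\ee(q)$ is independent of $X$, integrating this identity in $t$ against the exponential density upgrades it to $X_{\ee(q)}-\overline X_{\ee(q)}\stackrel{d}{=}\underline X_{\ee(q)}$, which is (ii). So this part is essentially a one-liner once duality is invoked.

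For part (i) I would decompose the path at $G := \sup\{s\le\ee(q) :\; X_s\vee X_{s-} = \overline X_{\ee(q)}\}$, the last time the running supremum up to $\ee(q)$ is attained. Then $\overline X_{\ee(q)} = X_G$ is measurable with respect to the pre-$G$ path, while $X_{\ee(q)}-\overline X_{\ee(q)}$ is measurable with respect to the post-$G$ path, so (i) amounts to the independence of these two pieces. I would establish this via the Poisson point process of excursions of $\overline X - X$ away from zero: by the memoryless property of $\ee(q)$, the excursion straddling $\ee(q)$, together with everything in it beyond $G$, is conditionally independent of the earlier excursions (and hence of $\overline X_{\ee(q)}$) given the local time at the maximum.

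The main obstacle lies entirely in the excursion-theoretic step underlying (i): when zero is irregular for $\overline X - X$ (as for compound Poisson $X$), the local time at the maximum is a discrete counting process and the argument becomes mostly notational, whereas in the regular case a proper It\^o excursion measure has to be constructed. A cleaner alternative, which is the route taken in Kyprianou's book, is to first prove the factorization for random walks by a purely combinatorial argument (the Baxter--Spitzer cycle lemma) and then recover the L\'evy case by compound-Poisson approximation combined with weak continuity of the joint law of $(\overline X_{\ee(q)},\,X_{\ee(q)}-\overline X_{\ee(q)})$ in the characteristic triple of $X$. I would most likely adopt this latter route to avoid setting up excursion theory from scratch.
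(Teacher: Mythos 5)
The paper does not prove this theorem; it is quoted verbatim from Kyprianou's book (Theorem~6.15(i) and identity~(6.28) in \cite{Kyprianou}) as classical background. So there is no ``paper's proof'' to compare against, and the right yardstick is whether your sketch outlines a correct and reasonably complete argument. Your treatment of part~(ii) via time-reversal duality is essentially complete and is the standard one; the only technicality worth recording is that the identification $\overline{\tilde X}_t = X_t - \underline X_t$ requires knowing that $\inf_{u\le t} X_{u-} = \underline X_t$ a.s., which holds for fixed $t$ because $X$ has no fixed discontinuities, and this fixed-$t$ identity is all that is needed once one integrates against the law of the independent $\ee(q)$. Your plan for part~(i) correctly isolates the real obstacle: $G$ is a last-exit time, not a stopping time, so the strong Markov property cannot be applied directly and one must pass through It\^o excursion theory for $\overline X - X$ together with the lack-of-memory property of $\ee(q)$ (or, in the irregular/compound Poisson case, a discrete version of the same splitting); this is precisely the Greenwood--Pitman splitting at the maximum, and the full statement of Theorem~6.15 is in fact the stronger quadruple independence of $(G,\overline X_{\ee(q)})$ from $(\ee(q)-G,\, X_{\ee(q)}-\overline X_{\ee(q)})$, of which your (i) is the marginal consequence used in the paper.

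One factual correction: the ``cleaner alternative'' you attribute to Kyprianou's book---proving the factorisation for random walks via the Spitzer--Baxter combinatorics and then approximating---is not the route taken there. Kyprianou's proof of Theorem~6.15 is the excursion-theoretic one (your first route), building on the local time at the maximum and the ladder process developed in his Sections~6.1--6.3, with the compound Poisson/irregular case handled as a degenerate instance of the same scheme rather than as the starting point of a limiting argument. The random-walk-approximation proof is a genuine alternative found elsewhere (e.g.\ in treatments in the spirit of Feller and Spitzer), but it is not what the paper's cited reference does. This does not affect the correctness of your outline; it only mislabels which of your two proposed routes the cited source actually follows.
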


Let us explain how  we will use the above Wiener-Hopf factorization result in order to obtain information about the transition probability density of the process killed on the first exit from $(0,\infty)$. 
Assume that $X_0=0$ and denote by $f_{\overline X}(x)$ and $f_{\underline X}(x)$ the probability density functions of 
${\overline X}_{\ee(1)}$ and $-{\underline X}_{\ee(1)}$, respectively (we will prove later that these densities exist 
and express them in terms of the double sine function $S_2$).  Due to the scaling property of stable processes, the random variable ${\overline X}_{\ee(q)}$ has density $q^{1/\alpha} f_{\overline X}(xq^{1/\alpha})$,  with a similar result for $\underline X_{\ee(q)}$. Let us denote by $H_q(x,y,z)$ the joint density of 
$(X_{\ee(q)}, \underline X_{\ee(q)})$ for the process started at $X_0=x$:
$$
\p_x(X_{\ee(q)} \in \d y, \underline X_{\ee(q)} \in \d z )=H_q(x,y,z)\d y \d z,
\;\; {\textnormal{ where }} \; z< \min(x,y). 
$$
According to the Theorem \ref{theorem_WHF}, the pair $(X_{\ee(q)},\underline X_{\ee(q)})$ 
has the same distribution as $(x+S_q-I_q,x-I_q)$, where $S_q$ and $I_q$ are {\it independent} random variables having the same distribution as 
$\overline X_{\ee(q)}-x$ and $x-\underline X_{\ee(q)}$, respectively. Therefore the function $H_q(x,y,z)$ can be written as follows
\begin{equation}\label{eqn_H_q_factorization}
H_q(x,y,z)=q^{2/\alpha} f_{\overline X}((y-z)q^{1/\alpha}) f_{\underline X}((x-z)q^{1/\alpha}). 
\end{equation}
The above identity is well-known in the theory of fluctuations of L\'evy processes. For example, see the proof of Theorem 18 in 
\cite{Doney2007}
 
Let us denote by $h_t(x,y,z)$ the joint density of $(X_t,\underline X_t)$ for the process started from $X_0=x$, that is  
$$
\p_x(X_t \in \d y, {\underline X}_t \in \d z)= h_t(x,y,z) \d y \d z,
\;\; {\textnormal{ where }} \; 0<z< \min(x,y). 
$$
This function is related to the transition probability density and the semigroup of the killed process via the following identities
\begin{equation}\label{eqn_p_t_P_t}
p_t(x,y)=\int_0^{\min(x,y)} h_t(x,y,z)\d z, \;\;\;
P_t u(x)=\int_0^{x} \left[ \int_z^{\infty} h_t(x,y,z) u(y) \d y \right] \d z,
\end{equation}
where we have assumed that $u$ is bounded on $(0,\infty)$. 
At the same time, by conditioning on the random variable $\ee(q)$ (which has exponential distribution with the density
$qe^{-qt}{\mathbf 1}_{\{t>0\}}$) we see that 
\begin{equation}\label{eqn_Ht_Hq}
\int_0^{\infty} qe^{-q t} h_t(x,y,z)\d t=H_q(x,y,z), 
\end{equation}
and therefore the function $q \mapsto q^{-1}H_q(x,y,z)$ is the Laplace transform of $t\mapsto h_t(x,y,z)$. Our plan for proving Theorem \ref{thm_main}(ii) and \ref{theorem_2}(ii) is to invert the Laplace transform in \eqref{eqn_Ht_Hq} and to obtain
\begin{equation}\label{eqn_H_inv_Laplace}
h_t(x,y,z)=\frac{1}{2\pi \i} \int_{\i \r} q^{2/\alpha-1} f_{\overline X}((y-z)q^{1/\alpha}) f_{\underline X}((x-z)q^{1/\alpha}) e^{qt} \d q,
\end{equation}
with $f_{\overline X}(x)$ and $f_{\underline X}(x)$ expressed in terms of the double sine function, and then deform the contour of integration, so that the vertical line $\i \r$ is transformed into Hankel's contour (beginning at $-\infty$, going around $0$ in counter-clockwise direction and ending at $-\infty$, see Lemma \ref{lemma_2} below).
As we will see, a lot of effort is required for justifying this transformation of the contour of integration.

\begin{remark}
The function $H_q(x,y,z)$ is closely related to {\it resolvent operators}, defined as 
$$
R_qu(x)=\int_0^{\infty} e^{-qt} P_t u(x) \d t=q^{-1} \e_x[ u(X_{\ee(q)}) {\mathbf 1}_{\{\underline X_{\ee(q)}>0\}}]. 
$$
It is clear from \eqref{eqn_p_t_P_t} that $R_q$ is an integral operator with the kernel
$$
r_q(x,y)=q^{-1} \int_0^{\min(x,y)} H_q(x,y,z)\d z. 
$$
\end{remark}

\subsection{Some preliminary results}\label{section_preliminary_Results}

In the next theorem we identify the Wiener-Hopf factors for a stable process $X$, which are defined as the Laplace transform of positive random variables ${\overline X}_{\ee(1)}$ and  $-{\underline X}_{\ee(1)}$:
\begin{equation}
\phi(z)=\e [e^{-z {\overline X}_{\ee(1)}}], \;\;\; \hat \phi(z)=\e[e^{z {\underline  X}_{\ee(1)}}], \;\;\; \re(z)\ge 0.  
\end{equation}

\begin{theorem}\label{thm_phi_S2} 
Assume that $(\alpha,\rho)\in {\mathcal A}$. 
For  $\re(z)\ge 0$
  \begin{align}\label{eq_phi_S2}
 \phi(z)=&z^{-\alpha \rho/2} S_2(1/2 +\alpha/2+\alpha\rho/2 + \i \alpha\ln(z)/(2\pi))\\
  & \;\;\;\; \;\times S_2(1/2 +\alpha/2+\alpha\rho/2 - \i \alpha\ln(z)/(2\pi)), \nonumber
 \end{align}
and $\hat \phi(z)$ can be obtained from the above formula by replacing $\rho \mapsto \hat \rho$.  
\end{theorem}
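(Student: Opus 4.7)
The plan is to verify directly that the right-hand side of \eqref{eq_phi_S2}, which I will denote $\phi_*(z)$, and its counterpart $\hat\phi_*(z)$ obtained by swapping $\rho$ and $\hat\rho$, satisfy the Wiener-Hopf factorization identity
\[
\phi_*(-\i\xi)\,\hat\phi_*(\i\xi) \;=\; \frac{1}{1+\Psi(\xi)},\qquad \xi\in\r,
\]
together with the analytic/normalization properties that determine the true factors $\phi,\hat\phi$ uniquely. The identity is obtained by taking characteristic functions in Theorem~\ref{theorem_WHF} at $q=1$ and using $\e[e^{\i\xi X_{\ee(1)}}]=1/(1+\Psi(\xi))$. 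Uniqueness is classical: $\phi$ is the Laplace transform of an infinitely divisible distribution, hence $\phi=e^{\psi}$ for some analytic $\psi$, so $\phi$ is analytic, bounded, and zero-free in the open right half-plane with $\phi(0)=1$; any two such factorizations differ by a factor that extends to a bounded entire function, which is constant by Liouville and equal to $1$ by the normalization.

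The core is the algebraic verification. Setting $w:=\alpha\log\xi/(2\pi)$ for $\xi>0$ and using $\log(\mp\i\xi)=\log\xi\mp\i\pi/2$, the four $S_2$-factors appearing in $\phi_*(-\i\xi)\,\hat\phi_*(\i\xi)$ are evaluated at
\[
\begin{aligned}
A_1 &= \tfrac12+\tfrac{3\alpha}{4}+\tfrac{\alpha\rho}{2}+\i w, &\quad A_2 &= \tfrac12+\tfrac{\alpha}{4}+\tfrac{\alpha\rho}{2}-\i w,\\
B_1 &= \tfrac12+\tfrac{3\alpha}{4}-\tfrac{\alpha\rho}{2}+\i w, &\quad B_2 &= \tfrac12+\tfrac{5\alpha}{4}-\tfrac{\alpha\rho}{2}-\i w,
\end{aligned}
\]
and are multiplied by the prefactor $\xi^{-\alpha/2}\,e^{-\i\pi\alpha(1/2-\rho)/2}$. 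The decisive observation is the pair of identities $A_2+B_1=1+\alpha$ and $A_1+B_2=1+2\alpha$, which open the way to the reflection formula $S_2(z)S_2(1+\alpha-z)\equiv 1$ (this follows from \eqref{S2_two_functional_eqns} together with $S_2((1+\alpha)/2)=1$ by checking that the left-hand side is periodic with periods $1$ and $\alpha$, and should be cited from Appendix~\ref{AppendixA}). The first pairing yields $S_2(A_2)S_2(B_1)=1$ at once. For the second, I would write $B_2=(1+\alpha-A_1)+\alpha$ and apply the second functional equation in \eqref{S2_two_functional_eqns} to obtain
\[
S_2(A_1)S_2(B_2) \;=\; \frac{S_2(A_1)\,S_2(1+\alpha-A_1)}{2\sin(\pi(1+\alpha-A_1))} \;=\; \frac{1}{2\cos\!\big(\pi\alpha(1/2-\rho)/2-\i\alpha\log\xi/2\big)},
\]
the last step using $\sin(x-\pi/2)=-\cos(x)$. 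Combining with the direct trigonometric factorization $1+\Psi(\xi)=2\xi^{\alpha/2}e^{\i\pi\alpha(1/2-\rho)/2}\cos(\pi\alpha(1/2-\rho)/2-\i\alpha\log\xi/2)$ delivers the Wiener-Hopf identity.

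It then remains to check that $\phi_*$ is analytic, bounded, and zero-free on $\{\re z>0\}$, with $\phi_*(0+)=1$. Analyticity and zero-freeness are immediate from the fact that $S_2$ is zero-free and meromorphic with poles only at $\{m\alpha+n:m,n\in\mathbb{N}\}$, since the arguments $\tfrac12+\tfrac{\alpha}{2}+\tfrac{\alpha\rho}{2}\pm\i\alpha\log z/(2\pi)$ have real part strictly between two consecutive potential pole locations by the admissibility range on $\rho$. Boundedness in a half-plane and the normalization $\phi_*(0+)=1$ both follow directly from \eqref{S_2_asymptotics} (applied with $b=\tfrac12+\tfrac{\alpha}{2}+\tfrac{\alpha\rho}{2}$, noting that the prefactor $\xi^{-\alpha\rho/2}$ cancels the leading $\xi^{\alpha\rho/2}$ of $|S_2|^2$ as $\xi\to 0^+$). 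By uniqueness, $\phi_*=\phi$ and $\hat\phi_*=\hat\phi$.

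The principal obstacle is the bookkeeping in the middle step: tracking branches of $\log(\mp\i\xi)$, signs in the functional equations, and the derivation of the reflection formula. Once the pairings $A_1+B_2=1+2\alpha$ and $A_2+B_1=1+\alpha$ are spotted, the trigonometric simplification is short. A secondary strategic point is that this route sidesteps any ``ground-up'' construction of $\phi$ (e.g.\ via the Fristedt formula or by analyzing the ascending ladder subordinator) and relies entirely on the uniqueness of the Wiener-Hopf factorization.
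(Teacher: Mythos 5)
Your overall strategy is the same as the paper's: verify that $\phi_*(-\i\xi)\,\hat\phi_*(\i\xi)=1/(1+\Psi(\xi))$ for real $\xi$ using the functional equations and the reflection formula for $S_2$, and then identify $\phi_*$ with the genuine Wiener--Hopf factor by a uniqueness argument. The algebraic verification is correct: the pairings $A_2+B_1=1+\alpha$ and $A_1+B_2=1+2\alpha$ reduce the four $S_2$-factors to the single $\cos$ in exactly the same way as the paper's chain of reflections and one application of the second functional equation, and your trigonometric simplification of $1+\Psi(\xi)$ matches.

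The gap is in the uniqueness step. You assert that any two admissible factorizations ``differ by a factor that extends to a bounded entire function, which is constant by Liouville.'' This does not follow from the properties you listed. Both $\phi$ and $\phi_*$ are bounded and zero-free in $\re z>0$, but $\phi_*$ tends to $0$ at a polynomial rate $|z|^{-\alpha\rho}$ at infinity (as you yourself note when checking $\phi_*(0^+)=1$), so the quotient $g=\phi/\phi_*$ is not a priori bounded; nor is $\hat\phi_*/\hat\phi$ bounded in the other half-plane without an independent lower bound on $\hat\phi$. Liouville therefore does not apply as stated. The correct ingredient is a control on the \emph{growth of the logarithm}: $g$ is entire and zero-free, hence $g=e^h$ with $h$ entire, and one must show $h=o(|z|)$ to conclude $h$ is constant (e.g.\ via Borel--Carath\'eodory). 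This is exactly what the paper establishes — it verifies from \eqref{S_2_asymptotics} that $z^{-1}\ln f(z)\to 0$ uniformly in $\re z\ge 0$, and then invokes the uniqueness criterion \cite[Theorem 1(f)]{Kuz2011b}, which is formulated under precisely this sub-linear growth hypothesis on $\ln\phi$. Your argument should be amended either to cite that uniqueness result or to supply the $o(|z|)$ estimate on $\ln(\phi/\phi_*)$ in both half-planes (the right-half-plane estimate comes from $|\phi|\le 1$ and the polynomial lower bound on $|\phi_*|$; for the left half-plane one needs that $-\ln|\hat\phi(w)|$ grows at most polynomially, which follows from infinite divisibility of $\underline X_{\ee(1)}$ and the form of its Laplace exponent).
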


The above result was established in \cite{Kuz2011} in the case when $\alpha \neq 1$ (see Theorem 4 in \cite{Kuz2011} and formula 
\eqref{eqn_S2_Barnes_Gamma} below). Here we present a much simpler proof which also covers the case $\alpha=1$. 

\vspace{0.2cm}
\noindent
{\bf Proof of Theorem \ref{thm_phi_S2}:} 
The proof is based on the Wiener-Hopf factorization result, Theorem \ref{theorem_WHF}. Writing 
$X_{\ee(1)}={\overline X}_{\ee(1)}+(X_{\ee(1)}-{\overline X}_{\ee(1)})$ and using properties (i) and (ii) we obtain the following factorization
\begin{equation}\label{WH_factorization}
\frac{1}{1+\Psi\alpharho(z)}=\e[e^{\i zX_{\ee(1)}}]=\e[e^{\i z{\overline X}_{\ee(1)}}] \times 
\e[e^{\i z {\underline X}_{\ee(1)}}]=\phi(-\i z) \times \hat \phi(\i z), \;\;\; z\in \r. 
\end{equation}
This is a classical Riemann-Hilbert problem: we need to find a function $\phi(-\i z)$  analytic in the upper half-plane $\im(z)>0$ and continuous in the closed upper half-plane and another function $\hat \phi(\i z)$ having the same properties but in the lower half-plane  $\im(z)<0$ which meet at the boundary $\im(z)=0$ as prescribed by \eqref{WH_factorization}. 

Let us define by $f(z)$ the function in the right-hand side of \eqref{eq_phi_S2}, and by $\hat{f}(z)$ the same function, but with $\rho$ replaced by $\hat{\rho}$. First we will verify that 
the functions $f$ and $\hat{f}$ satisfy  \eqref{WH_factorization}. 
Assume that $z>0$ and let us denote $w=\alpha \ln(z)/(2\pi \i)$. Then
\begin{align*}
f(-\i z) \hat{f}(\i z)  &=
z^{-\alpha \rho/2} e^{\pi \i \alpha \rho/4} S_2(1/2+\alpha/4+\alpha\rho/2+w) S_2(1/2+3\alpha/4+\alpha \rho/2-w) \\
&\times 
z^{-\alpha (1-\rho)/2} e^{-\pi \i \alpha (1-\rho)/4} S_2(1/2+5\alpha/4-\alpha\rho/2+w) S_2(1/2+3\alpha/4-\alpha \rho/2-w)\\
&=z^{-\alpha/2} e^{\pi \i \alpha (\rho-1/2)/2}
S_2(1/2+3\alpha/4+\alpha \rho/2-w)S_2(1/2+5\alpha/4-\alpha\rho/2+w),
\end{align*}
where we have used identity \eqref{S2_reflection_formula} in the form
$$
 S_2(1/2+\alpha/4+\alpha\rho/2+w)S_2(1/2+3\alpha/4-\alpha \rho/2-w)=1.
$$
Next, according to the second functional equation  in \eqref{S2_two_functional_eqns} we have
$$
S_2(1/2+5\alpha/4-\alpha\rho/2+w)=\frac{S_2(1/2+\alpha/4-\alpha\rho/2+w)}{2 \sin(\pi(1/2+\alpha/4-\alpha\rho/2+w))}.
$$
Using the above result and identity \eqref{S2_reflection_formula} in the form
$$
S_2(1/2+\alpha/4-\alpha\rho/2+w)S_2(1/2+3\alpha/4+\alpha \rho/2-w)=1
$$
we obtain
\begin{align*}
f(-\i z) \hat{f}(\i z) &=\frac{z^{-\alpha/2} e^{\pi \i \alpha (\rho-1/2)/2}}
{2 \sin(\pi(1/2+\alpha/4-\alpha\rho/2+w))}\\
&=\frac{z^{-\alpha/2} e^{\pi \i \alpha (\rho-1/2)/2}}
{z^{\alpha/2} e^{\pi \i \alpha (1/2-\rho)/2}+z^{-\alpha/2} e^{\pi \i \alpha (\rho-1/2)/2}}=\frac{1}{1+z^{\alpha} e^{\pi \i \alpha (1/2-\rho)}}=\frac{1}{1+\Psi\alpharho(z)}. 
\end{align*}
Thus we have verified that that the functions $f$ and $\hat{f}$ satisfy \eqref{WH_factorization} for $z>0$, and the proof for $z<0$ follows by taking the complex conjugate in the above equation.

To prove that our candidate solutions $f$ and $\hat{f}$ are in fact the Wiener-Hopf factors, we need to apply a certain uniqueness argument. Using the fact that $S_2(z)$ is analytic and zero-free in the strip
$0<\re(\alpha)<1+\alpha$ we check that the functions $f$ and $\hat{f}$ are analytic and zero-free in the half-plane $\re(z)\ge 0$. The asymptotic result \eqref{S_2_asymptotics} easily gives us $z^{-1} \ln(f(z)) \to 0$ as $z\to \infty$ uniformly in the half-plane $\re(z)\ge 0$ (with a similar result for $\hat{f}$). To finish the proof we only need to apply the uniqueness result given in \cite[Theorem 1(f)]{Kuz2011b} and conclude that $\phi \equiv f$. 
\qed

\begin{proposition}\label{thm_Xe1_CM}
Assume that $(\alpha,\rho)\in {\mathcal A}$. The distribution of the random variable ${\overline X}_{\ee(1)}$ is a mixture of exponentials and its density is given by 
\begin{equation}\label{px_completely_monotone}
f_{\overline X}(x)=\int_0^{\infty} e^{- x u} \mu(u) \d u, \;\;\; x>0
\end{equation}
where
\begin{equation}\label{def_mu}
\mu(u):=
 \frac{1}{\pi} \sin(\pi \alpha \rho)
  u^{\alpha \hat \rho/2} | S_2(1/2+\alpha+\alpha\rho/2 + \i \alpha \ln(u)/(2\pi))|^2.
\end{equation}
The function $f_{\underline X}(x)$ (the density of $-\underline X_{\ee(1)}$) can be obtained from above equations by replacing $\rho \mapsto \hat \rho$. 
\end{proposition}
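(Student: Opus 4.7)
The strategy is to invert the Laplace transform formula \eqref{eq_phi_S2} for $\phi$ via the Bromwich integral, deforming the vertical contour into a Hankel-type loop around the branch cut of $\phi$ along $(-\infty,0]$. If the deformation can be carried out, one obtains
\begin{equation*}
f_{\overline X}(x) \;=\; \int_0^{\infty} \frac{\phi(-u-\i 0)-\phi(-u+\i 0)}{2\pi \i}\, e^{-xu}\,\d u,
\end{equation*}
so that $\mu(u)$ is identified with the jump of $\phi$ across the negative real axis divided by $2\pi\i$. The proof then splits into two tasks: (i) setting up the analytic framework and estimates required for the contour manipulation, and (ii) computing this jump explicitly and showing it is nonnegative.

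For (i), I would first verify that \eqref{eq_phi_S2} extends $\phi$ holomorphically to $\mathbb{C}\setminus(-\infty,0]$. The arguments $1/2+\alpha/2+\alpha\rho/2\pm\i\alpha\ln(z)/(2\pi)$ of the two $S_2$ factors have real parts in the open interval $(1/2+\alpha\rho/2,\,1/2+\alpha+\alpha\rho/2)$ for $|\arg z|<\pi$, and the fact that $\alpha\rho<1$ throughout $\mathcal{A}$ places this strictly inside the analyticity-and-nonvanishing strip $(0,1+\alpha)$ of $S_2$. The uniform asymptotic \eqref{S_2_asymptotics} then yields $|\phi(z)|=O(|z|^{-\alpha\rho})$ as $|z|\to\infty$ with $|\arg z|\leq\pi-\varepsilon$, together with $\phi(z)\to 1$ as $z\to 0$. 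For (ii), writing $A=1/2+\alpha\rho/2+\i\alpha\ln(u)/(2\pi)$ and tracking the principal branches of $\ln z$ and $z^{-\alpha\rho/2}$ at $z=-u\pm\i 0$ expresses both boundary values of $\phi$ in terms of $S_2(A)$, $S_2(\bar A)$, $S_2(A+\alpha)$, $S_2(\bar A+\alpha)$ and the phase factors $e^{\pm\i\pi\alpha\rho/2}$. A single application of the functional equation $S_2(w+\alpha)=S_2(w)/(2\sin\pi w)$ on each side, followed by an addition-formula simplification of the resulting trigonometric combination, should collapse the difference to $(1/\pi)\sin(\pi\alpha\rho)\,u^{\alpha\hat\rho/2}|S_2(1/2+\alpha+\alpha\rho/2+\i\alpha\ln u/(2\pi))|^2$, which is precisely $\mu(u)$ in \eqref{def_mu}. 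Since $\alpha\rho\in(0,1)$ throughout $\mathcal{A}$, the factor $\sin(\pi\alpha\rho)$ is strictly positive and $\mu\geq 0$, confirming the mixture-of-exponentials representation \eqref{px_completely_monotone}.

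The main obstacle will be justifying the contour deformation. Closing the Bromwich line into a keyhole around the cut, the outer arc $|z|=R$ requires attention in two regimes: near $\arg z=\pm\pi/2$, where the arc meets the vertical Bromwich line, the angular extent is only $O(1/R)$, so the polynomial decay $|\phi(z)|\lesssim R^{-\alpha\rho}$ alone suffices; for $|\arg z|\in(\pi/2,\pi-\delta)$ the exponential decay of $e^{xz}$ in the left half-plane overwhelms the polynomial factors by a standard Jordan-type estimate, which leans on the uniformity of \eqref{S_2_asymptotics} in $\arg z$ on compact subsets of $(-\pi,\pi)$. A small circle around $z=0$ contributes nothing since $\phi$ is bounded there and the circumference vanishes. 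The companion formula for $f_{\underline X}$ then follows by the dual construction: since $-\underline X_{\ee(1)}$ for $X$ has the same law as $\overline{\hat X}_{\ee(1)}$ for the dual process $\hat X=-X$, whose parameters are $(\alpha,\hat\rho)$, the result is obtained by replacing $\rho\leftrightarrow\hat\rho$ throughout.
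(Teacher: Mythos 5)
Your proposal takes a genuinely different route from the paper. The paper first invokes a probabilistic result of Rogers (\cite{Rogers83}, Theorem 2) asserting that $\overline{X}_{\ee(1)}$ is a mixture of exponentials, which at once gives existence of a positive representing measure $\mu(\d u)$ and exhibits $\phi$ as the Stieltjes transform $\int_0^\infty \mu(\d u)/(u+z)$. The density $\mu(u)$ is then recovered by the one-line Stieltjes inversion $\mu(u)=-\pi^{-1}\im[\phi(e^{\pi\i}u)]$, and the remaining work is the same functional-equation computation of $\im[\phi(e^{\pi\i}u)]$ that you sketch. Your route instead does a Bromwich inversion of the Laplace transform $\phi$ and a Hankel-keyhole deformation, identifying $\mu$ with the boundary jump of $\phi$ across $(-\infty,0]$ divided by $2\pi\i$. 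What the paper's route buys you: (a) positivity of $\mu$ is automatic from the mixture-of-exponentials structure, rather than needing to be checked from the sign of $\sin(\pi\alpha\rho)$; (b) the Stieltjes inversion requires no contour manipulation at all, whereas your Bromwich contour has $|\phi(c+\i t)|\asymp|t|^{-\alpha\rho}$ with $\alpha\rho<1$ throughout $\mathcal{A}$, so the inversion integral is not absolutely convergent and your Jordan-arc/keyhole argument needs an additional regularization (e.g.\ Ces\`aro smoothing, an integration by parts, or first inverting $\phi(z)/z^2$ and differentiating) that your sketch does not supply. This gap is not fatal --- it is a standard technicality --- but it is precisely the kind of delicacy the paper avoids by going through Rogers' theorem, which is also why the paper reserves its Hankel-contour machinery (Lemma \ref{lemma_2}) for settings with decay $O(|z|^{-1-\delta})$. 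In short, your approach is viable and more self-contained (it does not rely on an external probabilistic fact), but it is technically heavier; the paper's detour through Stieltjes transforms is a deliberate and effective shortcut.
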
 
\begin{proof}
 According to 
\cite[Theorem 2]{Rogers83}, the random variable ${\overline X}_{\ee(1)}$ is a mixture of exponentials, therefore we can write its density in the form \eqref{px_completely_monotone} with some positive measure $\mu(\d u)$. Applying Fubini's Theorem we see that  
$$
\phi(z)=\int_0^{\infty} e^{-zx} f_{\overline X}(x) \d x=\int_0^{\infty} \frac{\mu(\d u)}{u+z},
$$
thus the Wiener-Hopf factor $\phi(z)$ given by \eqref{eq_phi_S2}  is the Stieltjes transform of the measure $\mu(\d u)$. 
Using the well-known result on the inversion of Stieltjes transform we conclude that the measure $\mu(\d u)$ has a density
$\mu(u)$ which can be found via
\begin{equation}\label{eqn_mu_lambda}
\mu(u)=-\frac{1}{\pi} \im[ \phi(e^{\pi \i} u)]. 
\end{equation}
From formula \eqref{eq_phi_S2} we find (as before, denoting $w=\i \alpha \ln(u)/(2\pi)$)
\begin{equation*}
\phi(e^{\pi \i} u)=u^{-\alpha \rho/2} e^{- \pi \i \alpha \rho/2} S_2(1/2+\alpha+\alpha \rho/2+w)
S_2(1/2+\alpha\rho/2-w).
\end{equation*}
Applying the second functional equation in \eqref{S2_two_functional_eqns} we check that
\begin{align*}
S_2(1/2+\alpha\rho/2-w)&=S_2(1/2+\alpha+\alpha\rho/2-w) 2 \sin(\pi (1/2+\alpha\rho/2-w))\\
&=S_2(1/2+\alpha+\alpha\rho/2-w) \left(e^{\pi \i \alpha \rho/2} u^{-\alpha/2}+e^{-\pi \i \alpha \rho/2} u^{\alpha/2}\right).
\end{align*}
Combining the above two formulas we finally obtain
$$
\phi(e^{\pi \i} u)=u^{-\alpha \rho/2} |S_2(1/2+\alpha+\alpha \rho/2+\i \alpha \ln(u)/(2\pi))|^2
\left(u^{-\alpha/2}+e^{-\pi \i \alpha \rho} u^{\alpha/2}\right),
$$
and now the desired result \eqref{def_mu} follows easily from \eqref{eqn_mu_lambda} and the above equation. 
\end{proof}

Let us denote by ${\mathcal R}$ the Riemann surface of the logarithm function. In what follows we will often consider 
functions defined on ${\mathcal R}$ or on sectors in ${\mathcal R}$. In particular, we have $\ln(e^{\i c} z)=\i c + \ln(z)$ for all 
$c\in \r$ and $z\in {\mathcal R}$.  Next, we state two simple lemmas, which will be used often in this paper. Both of these results are well-known and can be easily established by Cauchy Residue Theorem.   
\begin{lemma}[Rotating the contour of integration]\label{lemma_1}
Assume that the function $f(z)$, $z\in {\mathcal R}$, is analytic in the sector $-\epsilon<\arg(z)<b+\epsilon$ for some $b>0$ and $\epsilon>0$, except for a finite number of poles at points $z=z_i$ lying in the sector $0<\arg(z)<b$. Assume also that for some  $\delta>0$  we have $f(z)=O(|z|^{-1+\delta})$ as $|z|\to 0^+$ and $f(z)=O(|z|^{-1-\delta})$ as $|z|\to +\infty$, uniformly in the sector $0\le \arg(z) \le b$. 
Then 
$$
\int_0^{\infty} f(z) \d z=e^{\i b}\int_0^{\infty} f(e^{\i b} z)\d z+
2\pi \i \sum_i {\textnormal{Res}}(f(z) \,:\, z=z_i). 
$$
\end{lemma}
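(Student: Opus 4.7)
The plan is to apply Cauchy's residue theorem to a closed ``pie-slice'' contour and then let the radii tend to $0$ and $\infty$. For $0 < r < R$, let $\Gamma_{r,R}$ denote the positively oriented boundary of the region $\{z \in {\mathcal R} : r < |z| < R, \ 0 < \arg(z) < b\}$. This boundary consists of four pieces: the segment $[r,R]$ on the positive real axis, the outer arc $A_R = \{Re^{\i\theta} : 0 \le \theta \le b\}$, the segment from $Re^{\i b}$ to $re^{\i b}$ along the ray $\arg(z) = b$ (traversed inward), and the inner arc $A_r = \{re^{\i\theta} : 0 \le \theta \le b\}$ traversed clockwise. Choosing $r$ small enough and $R$ large enough that all the poles $z_i$ lie in the interior, the residue theorem gives
\[
\int_{\Gamma_{r,R}} f(z)\,\d z = 2\pi \i \sum_i {\textnormal{Res}}(f(z)\,:\,z = z_i).
\]

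Next, I would bound the two arc integrals. On the outer arc, parametrizing $z = Re^{\i\theta}$ gives $|\d z| = R\,\d\theta$, so the growth hypothesis $f(z) = O(|z|^{-1-\delta})$ uniformly for $0 \le \arg(z) \le b$ yields $|\int_{A_R} f(z)\,\d z| \le C R \cdot R^{-1-\delta} \cdot b \to 0$ as $R \to +\infty$. Similarly, on the inner arc $A_r$ the hypothesis $f(z) = O(|z|^{-1+\delta})$ yields $|\int_{A_r} f(z)\,\d z| \le C r \cdot r^{-1+\delta} \cdot b = C b \cdot r^{\delta} \to 0$ as $r \to 0^+$. Both growth estimates also guarantee that the integrals along the positive real axis and the ray $\arg(z) = b$ converge as $r \to 0^+$ and $R \to +\infty$: the value at the origin is controlled by the local bound $|z|^{-1+\delta}$ (which is integrable near $0$) and the behavior at infinity by $|z|^{-1-\delta}$ (which is integrable at $\infty$).

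Passing to the limit, the four boundary pieces contribute
\[
\int_0^{\infty} f(z)\,\d z \ - \ e^{\i b}\int_0^{\infty} f(e^{\i b} z)\,\d z,
\]
where the minus sign and the factor $e^{\i b}$ (after substituting $z \mapsto e^{\i b} z$) come from reversing the orientation on the slanted ray. Equating this to the residue sum produces the stated identity. The only mild subtlety, rather than a serious obstacle, is checking that the hypotheses on $f$ are uniform in $\arg(z) \in [0,b]$ so that the arc estimates are valid; since this uniformity is part of the assumption, the argument is routine.
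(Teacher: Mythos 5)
Your argument is correct and is precisely the standard pie-slice contour argument via the Cauchy residue theorem that the paper alludes to without writing out (the paper simply states that both Lemma \ref{lemma_1} and Lemma \ref{lemma_2} ``can be easily established by Cauchy Residue Theorem''). Your arc estimates and the passage to the limit are exactly what is needed, so this fills in the details the paper leaves to the reader.
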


\begin{lemma}[Hankel's contour of integration]\label{lemma_2}
Assume that the function $f(z)$, $z\in {\mathcal R}$ satisfies $\overline{f(z)}=f(\bar z)$ and is analytic in the domain
 $\pi/2-\epsilon<|\arg(z)|<\pi+\epsilon$ for some $\epsilon>0$, except for a finite number of poles at $z=z_i$ in the sector $\pi/2<\arg(z)<\pi$ and the corresponding poles $z=\bar z_i$ in the sector $-\pi<\arg(z)<-\pi/2$.  Assume also that for some  $\delta>0$  we have
$f(z)=O(|z|^{-1+\delta})$ as $|z|\to 0^+$ and $f(z)=O(|z|^{-1-\delta})$ as $|z|\to +\infty$, uniformly in the sector 
 $\pi/2 \le \arg(z)\le \pi$. 
Then
$$
\frac{1}{2\pi \i}\int_{\i \r} f(z) \d z=-\frac{1}{\pi} \int_0^{\infty} \im[f(e^{\pi \i}z)] \d z+
2 \re \Big[ \sum_i {\textnormal{Res}}(f(z) \,:\, z=z_i) \Big] . 
$$
\end{lemma}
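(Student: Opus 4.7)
The plan is to reduce the statement directly to Lemma~\ref{lemma_1} by combining a rotation of contour in the upper half-plane with the reflection symmetry $\overline{f(z)} = f(\bar z)$. Heuristically, the imaginary axis is first folded onto the positive imaginary ray using the symmetry, and that ray is then rotated by $\pi/2$ onto the upper side of the negative real ray using Lemma~\ref{lemma_1}, picking up the residues in the second quadrant along the way.

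First, I would parameterize the vertical contour by $z = \i t$, $t \in \r$. The symmetry $f(-\i t) = \overline{f(\i t)}$ allows me to fold the two halves of the line together and write
\begin{equation*}
\frac{1}{2\pi \i} \int_{\i \r} f(z)\,\d z = \frac{1}{2\pi} \int_{-\infty}^{\infty} f(\i t)\,\d t = \frac{1}{\pi} \int_0^{\infty} \re[f(\i t)]\,\d t.
\end{equation*}

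Next, I would apply Lemma~\ref{lemma_1} to the auxiliary function $g(z) := f(\i z)$ with rotation angle $b = \pi/2$. The assumed analyticity and decay of $f$ in the sector $\pi/2 \le \arg(z) \le \pi$ translate directly into the hypotheses of Lemma~\ref{lemma_1} for $g$ on the sector $0 \le \arg(z) \le \pi/2$, and the poles $z_i$ of $f$ in that sector correspond to poles of $g$ at $e^{-\pi \i/2} z_i$ in the first quadrant. After unwinding the $1/\i$ Jacobian in the residue calculation, the lemma produces
\begin{equation*}
\int_0^{\infty} f(\i t)\,\d t = \i \int_0^{\infty} f(e^{\pi \i} z)\,\d z + 2\pi \sum_i {\textnormal{Res}}(f(z) : z = z_i),
\end{equation*}
where $f(e^{\pi \i} z)$ denotes the boundary value on the upper side of the negative real axis, which is the side onto which the counterclockwise rotation in Lemma~\ref{lemma_1} lands.

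Finally, taking real parts of both sides of this identity, dividing by $\pi$, and combining with the folding step from the beginning yields exactly the claimed formula. The proof is essentially bookkeeping once Lemma~\ref{lemma_1} is available; the only places requiring attention are verifying that the translated decay hypotheses for $g$ match what Lemma~\ref{lemma_1} needs and keeping track of the branch-cut side of $f$ at $\arg(z) = \pi$, both of which are immediate from the statement of Lemma~\ref{lemma_1}.
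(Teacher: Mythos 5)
Your proof is correct. The paper does not actually supply an argument for this lemma (it merely remarks that Lemmas~\ref{lemma_1} and~\ref{lemma_2} are ``well-known and can be easily established by Cauchy Residue Theorem''), and your derivation---fold the vertical contour onto the positive imaginary ray using $f(\bar z)=\overline{f(z)}$, rotate by $\pi/2$ via Lemma~\ref{lemma_1} applied to $g(z)=f(\i z)$, then take real parts---is exactly the kind of routine residue-theorem computation the authors have in mind, and the residue bookkeeping (each residue of $g$ equals $\i^{-1}$ times the corresponding residue of $f$, which cancels the $\i$ from $2\pi\i$) is carried out correctly.
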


The following elementary result will be useful in deriving various estimates. 
\begin{lemma}\label{lemma_Mellin_bound}
Let $f_1(x):=\min(x^{a},x^b)$ and $f_2(x):=\min(x^c, x^d)$, where we assume that $a\ge b$, $c\ge d$, $a+c+1>0$ and $b+d+1<0$. 
Define $\xi:=\min(a+1,-d)$ and $\eta:=\max(b+1,-c)$. 
Then $\xi\ge \eta$ and there exists $C=C(a,b,c,d)>0$ such that 
$\int_0^{\infty} f_1(x) f_2(x/\lambda) \d x \le
C \min(\lambda^{\xi},\lambda^{\eta})$ for all $\lambda>0$.
\end{lemma}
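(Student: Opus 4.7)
\emph{Proof plan.} The inequality $\xi \ge \eta$ is immediate: $a \ge b$ gives $a+1 \ge b+1$, $c \ge d$ gives $-d \ge -c$, while the remaining assumptions $a+c+1>0$ and $b+d+1<0$ may be read as $a+1 > -c$ and $-d > b+1$. Consequently each of $a+1$ and $-d$ dominates both $b+1$ and $-c$, so $\xi = \min(a+1,-d) \ge \max(b+1,-c) = \eta$. For the integral bound, the key observation is that the integrand has exactly two break-points: $f_1(x)$ equals $x^a$ on $(0,1]$ and $x^b$ on $[1,\infty)$, and $f_2(x/\lambda)$ equals $(x/\lambda)^c$ on $(0,\lambda]$ and $(x/\lambda)^d$ on $[\lambda,\infty)$. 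Splitting $(0,\infty)$ at $x=1$ and $x=\lambda$, on each of the three resulting sub-intervals $f_1(x)f_2(x/\lambda)$ reduces to a single monomial $\lambda^{-e}x^p$ with an elementary primitive.

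Consider first $\lambda \le 1$, where $\min(\lambda^\xi,\lambda^\eta) = \lambda^\xi$. The three regions are $(0,\lambda)$, $(\lambda,1)$, and $(1,\infty)$. On $(0,\lambda)$ the integrand is $\lambda^{-c}x^{a+c}$, integrating to $\lambda^{a+1}/(a+c+1)$, which is finite by $a+c+1>0$ and bounded by $\lambda^\xi$ since $a+1\ge\xi$ and $\lambda\le 1$. On $(1,\infty)$ the integrand is $\lambda^{-d}x^{b+d}$, integrating to $\lambda^{-d}/|b+d+1|$, finite by $b+d+1<0$ and bounded by $\lambda^\xi$ since $-d\ge\xi$. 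The middle region gives $\lambda^{-d}\int_\lambda^1 x^{a+d}\d x$; a short case-split on the sign of $a+d+1$ shows that the contribution is $O(\lambda^{-d})$ when $a+d+1>0$ (the upper endpoint dominates) and $O(\lambda^{a+1})$ when $a+d+1<0$ (the lower endpoint dominates), so in either case it is $O(\lambda^{\min(a+1,-d)}) = O(\lambda^\xi)$. Summing the three pieces yields $I(\lambda) \le C\lambda^\xi$.

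The case $\lambda \ge 1$ is entirely symmetric: the break-points swap to the order $1 < \lambda$, the outer contributions become $O(\lambda^{-c})$ from $(0,1)$ and $O(\lambda^{b+1})$ from $(\lambda,\infty)$, each bounded by $\lambda^\eta$ (since $-c \le \eta$ and $b+1 \le \eta$ and $\lambda \ge 1$), while the middle piece $(1,\lambda)$ integrates $\lambda^{-c}x^{b+c}$ and, after the analogous case-split on the sign of $b+c+1$, is $O(\lambda^{\max(b+1,-c)}) = O(\lambda^\eta)$. The main (and essentially only) technical obstacle is this middle-region estimate: the antiderivative must be controlled uniformly in $\lambda$ across the two opposite-sign sub-cases of the exponent, so that each sub-case cleanly recovers the target exponent $\xi$ (respectively $\eta$). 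Taking $C$ as the maximum of the six constants arising across both regimes yields the asserted estimate with $C$ depending only on $a,b,c,d$.
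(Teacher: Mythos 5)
Your approach is exactly the one the paper indicates (split the domain at $x=1$ and $x=\lambda$, treat $\lambda<1$ and $\lambda>1$ separately), and the inequality $\xi\ge\eta$ and the two outer-region bounds are handled correctly. However, in the middle-region estimate you dispatch only the cases $a+d+1>0$ and $a+d+1<0$ (and, for $\lambda\ge 1$, only $b+c+1\gtrless 0$) and silently skip the equality case. This is not a cosmetic omission. When $a+d+1=0$ one has $a+1=-d=\xi$, and the middle piece is $\lambda^{-d}\int_\lambda^1 x^{-1}\,\d x=\lambda^{\xi}\ln(1/\lambda)$, which is \emph{not} $O(\lambda^{\xi})$ as $\lambda\to 0^{+}$. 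In fact, with $(a,b,c,d)=(0,-1,1,-1)$ (which satisfies all the stated hypotheses) one computes directly, for $0<\lambda\le 1$,
\begin{equation*}
\int_0^\infty \min(1,x^{-1})\,\min(x/\lambda,\lambda/x)\,\d x=\tfrac{\lambda}{2}+\lambda\ln(1/\lambda)+\lambda=\lambda\bigl(\tfrac{3}{2}+\ln(1/\lambda)\bigr),
\end{equation*}
while $\xi=1$, $\eta=0$ and $\min(\lambda^{\xi},\lambda^{\eta})=\lambda$; no constant $C$ works as $\lambda\to 0^{+}$.

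So the gap is real: the lemma as stated fails precisely on the boundary $a+d+1=0$ (and, symmetrically, $b+c+1=0$), and your proof should either add the hypotheses $a+d+1\ne 0$ and $b+c+1\ne 0$ and say so explicitly, or concede a logarithmic loss in the conclusion. (The paper leaves the proof to the reader and does not flag this either; note that in the application in Lemma~\ref{lemma8} one does have $b+c+1=0$, so the resulting bound actually carries a harmless logarithm which does not affect any downstream use.)
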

The proof of the above result is very simple -- one only needs to evaluate the integral $\int_0^{\infty} f_1(x) f_2(x/\lambda) \d x$, distinguishing the two cases $\lambda>1$ and $\lambda<1$. We leave all the details to the reader.

\subsection{Proof of Theorem \ref{thm_F_Laplace_Mellin}}\label{subsection_proof_Thm3}

\vspace{0.25cm}
\noindent
{\bf Proof of Theorem \ref{thm_F_Laplace_Mellin}(i):}
We denote
\begin{align}\label{def_f_new}
f(z):=\frac{\sqrt{\alpha}}{2}S_2(\alpha\rho) \, &z^{-\alpha\hat \rho/2-1/2} 
S_2(1+\alpha/2+\alpha\hat \rho/2 +  \i \alpha \ln(z)/(2\pi))\\
& \qquad  \;\;\;\; \times S_2(1+\alpha/2+\alpha\hat \rho/2 -  \i \alpha \ln(z)/(2\pi)). \nonumber
\end{align} 
We consider $f(z)$ as a function on the Riemann surface ${\mathcal R}$. 
{Part~(i) of Theorem~\ref{thm_F_Laplace_Mellin} states that
$$
\int_0^\infty e^{-z x} F(x) dx = f(z)
$$
for $z > \max(0, \cos(\pi \rho))$. We}
will establish an equivalent statement 
\begin{equation}\label{eqn_F_inverse_Laplace}
F(x)=\frac{1}{2\pi \i} \int_{c+\i \r} 
f(z) e^{zx} \d z,
\end{equation}
where $c>\max(0,\cos(\pi \rho))$. 

Let us denote $\eta=2 \pi \min(1,1 / \alpha)+\rho \pi$ (it is easy to check that $\eta>\pi$ for all $(\alpha,\rho)\in {\mathcal A}$). 
Given the fact that the double sine function $S_2(z)$ has poles at points $\{m+n\alpha \, : \, m,n\ge 1\}$ and the pole at $1+\alpha$ is simple (see Appendix \ref{AppendixA}) we see that the function $f(z)$ is analytic in the sector 
$|\arg(z)|< \eta$, except for two simple poles at points $z_{\pm}=\exp(\pm \pi \i \rho)$. 
Let us compute the residues at these poles. From the first functional equation in \eqref{S2_two_functional_eqns} we find that
$$
S_2(1+\alpha/2+\alpha\hat \rho/2-\i \alpha \ln(z)/(2\pi))=\frac{S_2(\alpha/2+\alpha\hat\rho/2-\i\alpha \ln(z)/(2\pi))}
{2\sin(\pi(1/2+\hat\rho/2-\i\ln(z)/(2\pi))}.
$$
We also check that
$$
\frac{\d}{\d z} 2\sin(\pi(1/2+\hat\rho/2-\i\ln(z)/(2\pi)) \Big \vert_{z=z_+}=\frac{\i}{z_+}.
$$
Combining the above two formulas with \eqref{def_f_new} we obtain
\begin{align*}
{\textnormal{Res}}(f  :  z=z_+)&=-\i \frac{\sqrt{\alpha}}{2}S_2(\alpha\rho)
\exp(-\pi \i \rho (1+\alpha\hat \rho)/2+\pi \i \rho) S_2(\alpha)S_2(1+\alpha \hat \rho)\\
&=-\frac{\i}{2} \exp(\pi \i \rho(1-\alpha \hat \rho)/2),
\end{align*}
where we have also used formulas \eqref{S2_special_values} and \eqref{S2_reflection_formula}. 

Note that 
\begin{equation}\label{f_bound}
|f(z)| \le C(\alpha,\rho) \times \min(1,|z|^{-\alpha \hat \rho-1}), 
\end{equation}
as $|z|\to \infty$ or $|z| \to 0$ uniformly in the sector $|\arg z| < \eta$ (this upper bound follows from \eqref{S_2_asymptotics} and 
\eqref{def_f_new}). In particular, the integral in~\eqref{eqn_F_inverse_Laplace} converges, and we can shift the contour of integration in \eqref{eqn_F_inverse_Laplace} so that $c+\i \r \mapsto \i \r$ and then pass to Hankel's contour of integration (see Lemma \ref{lemma_2}) and obtain
\begin{align} \nonumber
&\frac{1}{2\pi \i} \int_{c+\i \r} 
f(z) e^{zx} \d x=
2 \re\left[ - \frac{\i}{2} \exp(e^{\pi \i \rho}x+\pi \i \rho(1-\alpha \hat \rho)/2)) \right]
- \frac{1}{\pi} \int_{0}^{\infty} e^{-zx} \im[f(ze^{\pi \i})] \d z
  \\
&= e^{\cos(\pi \rho) x} 
\sin(x \sin(\pi \rho)+\pi \rho(1-\alpha \hat \rho)/2)-\frac{1}{\pi} \int_{0}^{\infty} e^{-zx} \im[f(ze^{\pi \i})] \d z .
\label{thm2i_proof0}
\end{align}
Using the definition of $f(z)$ in \eqref{def_f_new} we check that
\begin{align*}
f(ze^{\pi \i})=\frac{\sqrt{\alpha}}{2} S_2(\alpha\rho) e^{-\pi \i (1+\alpha \hat \rho)/2}z^{-\alpha \hat \rho/2-1/2} S_2(1+\alpha+\alpha\hat\rho/2-\i \alpha \ln(z)/(2\pi))
S_2(1+\alpha\hat\rho/2+\i\alpha \ln(z)/(2\pi)).
\end{align*}
From the second functional equation in \eqref{S2_two_functional_eqns} it follows that
$$
S_2(1+\alpha\hat\rho/2+\i \alpha \ln(z)/(2\pi))=
\i\left[ e^{\pi \i \alpha \hat\rho/2}z^{-\alpha/2}-e^{-\pi \i \alpha \hat\rho/2} z^{\alpha/2} \right] 
S_2(1+\alpha+\alpha\hat\rho/2+\i \alpha \ln(z)/(2\pi)).
$$
The above two equations give us 
$$
\im[f(ze^{\pi \i})]=\frac{\sqrt{\alpha}}{2}
\sin(\pi \alpha \hat \rho)
S_2(\alpha \rho) 
z^{\alpha \rho/2-1/2} |S_2(1+\alpha+\alpha\rho/2+  \i \alpha \ln(z)/(2\pi))|^2.
$$
Applying the 
the second functional equation in \eqref{S2_two_functional_eqns}  we check that 
$$
\frac{\sqrt{\alpha}}{2} \sin(\pi \alpha \hat \rho)S_2(\alpha \rho) =
-\frac{\sqrt{\alpha}}{4} S_2(-\alpha \hat \rho). 
$$
Combining the above two results with \eqref{thm2i_proof0} we obtain formula \eqref{eqn_F_inverse_Laplace}. 
\qed

\begin{lemma}\label{lemma_3} Assume that $(\alpha,\rho)\in {\mathcal A}$. Then $F(x)=O(x^{\alpha \hat \rho})$ as 
$x\to 0^+$. 
\end{lemma}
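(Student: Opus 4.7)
The strategy exploits the Laplace transform identity \eqref{eqn_F_Laplace} from Theorem \ref{thm_F_Laplace_Mellin}(i) together with the explicit representation \eqref{def_F_eigenfunction}. Although neither of the two summands in \eqref{def_F_eigenfunction} vanishes at $x = 0$, their sum does, and this cancellation is the heart of the lemma. Applying the asymptotic \eqref{S_2_asymptotics} with $b = 1 + \alpha/2 + \alpha\hat\rho/2$ to the right-hand side of \eqref{eqn_F_Laplace} gives
\[
\int_0^{\infty} e^{-zx} F(x)\,\d x = O(z^{-1-\alpha\hat\rho}) \quad \text{as } z \to +\infty.
\]
Since $F$ is right-continuous at $0$ (the exponential-sine term is smooth and $G$ is continuous at $0$ by dominated convergence), the classical Abelian theorem for the Laplace transform yields $F(0^+) = \lim_{z \to +\infty} z \int_0^\infty e^{-zx} F(x)\,\d x = 0$, using $\alpha\hat\rho > 0$.

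With $F(0^+) = 0$ in hand, the remaining task is to estimate the rate of decay. I would write $F = A + C\,G$ with $A(x) := e^{x\cos(\pi\rho)}\sin(x\sin(\pi\rho) + \pi\rho(1-\alpha\hat\rho)/2)$ and $C := \tfrac{\sqrt{\alpha}}{4\pi}S_2(-\alpha\hat\rho)$. Smoothness of $A$ gives $A(x) - A(0) = O(x)$ immediately. For $G$, setting $g(z) := z^{\alpha\rho/2 - 1/2}|S_2(1 + \alpha + \alpha\hat\rho/2 + \i\alpha\ln(z)/(2\pi))|^2$, I would write
\[
G(0) - G(x) = \int_0^\infty (1 - e^{-zx})\,g(z)\,\d z
\]
and split the integral at $z = 1$. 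The asymptotic \eqref{S_2_asymptotics} gives $g(z) = O(z^{\alpha})$ as $z \to 0^+$ and $g(z) = O(z^{-1-\alpha\hat\rho})$ as $z \to +\infty$. The bound $1 - e^{-zx} \le zx$ handles $(0,1)$ at the cost of a term $O(x)$; on $(1,\infty)$, the substitution $w = zx$ together with the large-$z$ asymptotic of $g$ yields
\[
\int_1^\infty (1 - e^{-zx})\,g(z)\,\d z \le C\,x^{\alpha\hat\rho}\int_x^\infty (1 - e^{-w})\,w^{-1-\alpha\hat\rho}\,\d w = O(x^{\alpha\hat\rho}),
\]
the integral being finite in the limit $x \to 0^+$ thanks to $\alpha\hat\rho \in (0, 1)$.

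Combining, $F(x) = [A(x) - A(0)] + C[G(x) - G(0)] = O(x) + O(x^{\alpha\hat\rho}) = O(x^{\alpha\hat\rho})$, since $\alpha\hat\rho < 1$ throughout $\mathcal{A}$ makes $x = O(x^{\alpha\hat\rho})$ as $x \to 0^+$. The main conceptual obstacle is the cancellation used in the first step: the identity $\sin(\pi\rho(1-\alpha\hat\rho)/2) + C\,G(0) = 0$ would be cumbersome to verify by direct manipulation of the double sine functional equations, but it emerges essentially for free from Theorem \ref{thm_F_Laplace_Mellin}(i) through the Abelian theorem.
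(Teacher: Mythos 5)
Your proof is correct, and it takes a genuinely different route from the paper's. The paper also starts from the large-$z$ asymptotic $f(z) \sim C z^{-1-\alpha\hat\rho}$ of the Laplace transform~\eqref{eqn_F_Laplace}, but then it runs the \emph{Tauberian} machinery: it first notes $G(0^+)<\infty$ while $G'(0^+)=-\infty$ (so $F$ is monotone on some $(0,\epsilon)$), and then invokes Karamata's Tauberian theorem followed by the Monotone Density Theorem to deduce the sharper statement $F(x)=C\,x^{\alpha\hat\rho}(1+o(1))$. You instead use the \emph{Abelian} direction (the initial-value theorem), which only requires knowing in advance that $F$ is right-continuous at $0$ -- established from the smoothness of the exponential-sine term together with $G(0^+)<\infty$ -- to get $F(0^+)=0$ cheaply, and then obtain the $O(x^{\alpha\hat\rho})$ bound by the elementary split of $\int_0^\infty(1-e^{-zx})g(z)\,\d z$ at $z=1$. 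Your direct estimate of $G(0)-G(x)$ correctly uses $g(z)=O(z^{\alpha})$ near $0$ and $g(z)=O(z^{-1-\alpha\hat\rho})$ near $\infty$ (the paper's text has a harmless typo, writing $z^{-\alpha\rho-1}$ where $z^{-\alpha\hat\rho-1}$ is meant), and the constraint $\alpha\hat\rho\in(0,1)$ which you need holds throughout $\mathcal{A}$. The trade-off: the paper's route buys the precise constant in the asymptotic (useful elsewhere, though Lemma~\ref{lemma_3} asks only for $O$), at the cost of having to justify the monotonicity and the applicability of the Tauberian/Monotone Density theorems to a function that is not a priori nonnegative; your route is more self-contained, avoids Tauberian theory entirely, and delivers exactly the stated bound. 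Your closing observation -- that the cancellation $\sin(\pi\rho(1-\alpha\hat\rho)/2)+C\,G(0)=0$ comes for free from the Laplace-transform asymptotic rather than from manipulating the $S_2$ functional equations -- is a nice way to package the key non-obvious fact underlying the lemma.
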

\begin{proof}
Let us consider the function $G(x)$ defined by \eqref{def_g}. Using
\eqref{S_2_asymptotics} we check that the integrand in \eqref{def_g} satisfies 
\begin{equation}\label{eqn_g_est}
 z^{(\alpha \rho-1)/2} |S_2(1+\alpha+\alpha\hat \rho/2 + \i \alpha \ln(z)/(2\pi))|^2 
 = 
 \begin{cases}
 z^{\alpha}(1+o(1)), \;\; &{\textnormal{ as $z\to 0^+$}}, \\
 z^{-\alpha \rho-1}(1+o(1)), \;\; &{\textnormal{ as $z\to +\infty$}}. 
 \end{cases}
\end{equation}
This result combined with  \eqref{def_g} implies that $G(0^+)<+\infty$ and $G'(0^+)=-\infty$. We conclude that 
the function $F(x)$ is monotone in some interval $(0,\epsilon)$. 
Using  \eqref{S_2_asymptotics} we check that the function $f(z)$ (the Laplace transform of $F(x)$ given by 
\eqref{eqn_F_Laplace}) satisfies 
$f(z)= C z^{-\alpha \hat \rho-1} (1+o(1))$ as $z\to +\infty$,
for some constant $C=C(\alpha,\rho)>0$. Applying Karamata's Tauberian theorem followed by the Monotone Density Theorem we conclude that 
$F(x)= C x^{\alpha \hat \rho} (1+o(1))$ as $x\to 0^+$.  
\end{proof}

\vspace{0.25cm}
\noindent
{\bf Proof of Theorem \ref{thm_F_Laplace_Mellin}(ii):}
Let us again denote by $f(z)$ the Laplace transform of $F(x)$, given by \eqref{eqn_F_Laplace}. From formulas 
\eqref{S_2_asymptotics} and \eqref{eqn_F_Laplace} we check that the upper bound 
\eqref{f_bound} is true for all $z \in (0,\infty)$. At the same time, 
when $\rho\ge 1/2$ the function $F(x)$ is bounded  and according to Lemma \ref{lemma_3} it satisfies 
$F(x)=O(x^{\alpha\hat \rho})$ as $x\to 0^+$. Therefore we can apply Fubini's Theorem and conclude that
$$
\int_0^{\infty} f(z) z^{-s} \d z =\Gamma(1-s) \int_0^{\infty} x^{s-1} F(x) \d x,
$$
where both integrals converge absolutely for $\re(s) \in (-\alpha \hat\rho, 0)$. 
The integral identity \eqref{eqn_tau_binomial} implies 
\begin{equation}\label{formula_Mellin_f}
\int_0^{\infty} z^{-s} f(z) \d z=   \frac{\pi }{S_2(1-s)S_2(\alpha\hat \rho+s)}, \;\;\; -\alpha \hat\rho<\re(s)<1. 
\end{equation}
Formula \eqref{eqn_Mellin_F} follows from the above two equations by using \eqref{S2_reflection_formula}, the second functional equation in \eqref{S2_two_functional_eqns} and applying the reflection formula for the gamma function. 
\qed

\subsection{Proof of Theorem \ref{thm_main}}\label{subsection_proof_thm_main}

In the next lemma we collect some properties of the  function 
$$
\mu(u)=
 \frac{1}{\pi} \sin(\pi \alpha \rho)
  u^{\alpha \hat \rho/2} | S_2(1/2+\alpha+\alpha\rho/2 + \i \alpha \ln(u)/(2\pi))|^2,
$$
which has first appeared in Proposition \ref{thm_Xe1_CM}.

\begin{lemma}\label{lemma_4}
Assume that $(\alpha,\rho)\in {\mathcal A}$. 
\begin{itemize}
\item[(i)] The function $\mu(u)$ is analytic in the sector 
$|\arg(u)|<\pi(1/\alpha+\hat \rho)$, $u\in {\mathcal R}$, except for two simple poles at points $u_{\pm}=\exp(\pm \pi \i (1/\alpha-\rho))$. The residues at these poles are given by
\begin{equation}\label{residue_mu}
{\textnormal{Res}}(\mu(u) \, : \, u=u_{\pm})=\frac{S_2(\alpha \rho)}{2\pi \sqrt{\alpha}}
e^{\mp \pi \i(\alpha \rho \hat \rho/2+3 \rho/2-1/\alpha)}.
\end{equation} 
\item[(ii)] Denote $C:=\sin(\pi \alpha \rho)/\pi$. Then
  \begin{align}\label{asymptotics_mu}
  |\mu(u)|=
\begin{cases}
|u|^{\alpha}(C+o(1)), \;\;\;&\textnormal{ as } |u| \to 0^+, \\
|u|^{-\alpha \rho}(C+o(1)), \;\;\;&\textnormal{ as } |u| \to +\infty, 
\end{cases}
  \end{align}
uniformly in the sector $|\arg(u)|<\pi (1/\alpha + \hat \rho)$. 
\end{itemize}
\end{lemma}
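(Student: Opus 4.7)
I will first extend $\mu(u)$ to a meromorphic function on $\mathcal{R}$ by writing
\[
\mu(u) = \frac{\sin(\pi\alpha\rho)}{\pi}\, u^{\alpha\hat\rho/2}\, S_2(a+w)\, S_2(a-w),
\]
where $w := \i\alpha\ln(u)/(2\pi)$ and $a := 1/2 + \alpha + \alpha\rho/2$. For $u > 0$ the quantity $w$ is purely imaginary and $S_2(a-w) = \overline{S_2(a+w)}$ (since $S_2$ is real on $\r$), so the right-hand side reproduces \eqref{def_mu}.

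For part~(i), the singularities of $\mu$ come from the poles of $S_2(a\pm w)$. Since $S_2$ has simple poles exactly at $\{m\alpha+n:m,n\ge 1\}$, the factor $S_2(a+w)$ is singular at $w = (m-1)\alpha + (n-1) + (1-\alpha\rho)/2$, with the nearest such value $w_- := (1-\alpha\rho)/2$ attained at $(m,n)=(1,1)$. Since $\re(w) = -\alpha\arg(u)/(2\pi)$, this corresponds precisely to $u = u_-$, and the symmetric analysis of $S_2(a-w)$ gives $u = u_+$. Every other pole satisfies $|\re(w)| \ge (1-\alpha\rho)/2 + \min(1,\alpha)$, which strictly exceeds $(1+\alpha\hat\rho)/2$ because $\min(1,\alpha) > \alpha/2$ for every admissible $\alpha$, hence lies outside the sector $|\arg(u)| < \pi(1/\alpha + \hat\rho)$.

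For the residue at $u_-$, I apply the second functional equation in \eqref{S2_two_functional_eqns} to write $S_2(a+w) = S_2(a+w-\alpha)/(2\sin(\pi(a+w-\alpha)))$, so that $S_2(a+w) \sim S_2(1)/(-2\pi(w-w_-))$ near $w_-$. The regular factor reduces, by the same functional equation, to $S_2(a-w_-) = S_2(\alpha+\alpha\rho) = S_2(\alpha\rho)/(2\sin(\pi\alpha\rho))$. Combining with $u - u_- \sim -2\pi\i\, u_-/\alpha \cdot (w-w_-)$ and invoking the special value $S_2(1) = \sqrt{\alpha}$ (which follows from the reflection formula \eqref{S2_reflection_formula} and \eqref{S2_special_values}), I obtain
\[
\textnormal{Res}(\mu;u_-) = \frac{\i\, u_-^{1+\alpha\hat\rho/2}}{2\pi\sqrt{\alpha}}\, S_2(\alpha\rho).
\]
A direct computation shows $\i\, u_-^{1+\alpha\hat\rho/2} = e^{\i\pi(\alpha\rho\hat\rho/2 + 3\rho/2 - 1/\alpha)}$, which reproduces \eqref{residue_mu} at $u_-$; the residue at $u_+$ then follows from the symmetry $\mu(\bar u) = \overline{\mu(u)}$.

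For part~(ii), I apply \eqref{S_2_asymptotics} directly with $b = a$ and $c = \arg(u)$: setting $u = y e^{\i c}$ yields $|S_2(a+w) S_2(a-w)| = y^{-\alpha(1+\rho)/2}(1+o(1))$ as $y \to +\infty$ and $y^{\alpha(1+\rho)/2}(1+o(1))$ as $y \to 0^+$, uniformly in $c$ on compacts. Multiplying by $|u^{\alpha\hat\rho/2}| = y^{\alpha\hat\rho/2}$ and by $\sin(\pi\alpha\rho)/\pi = C$, the combined exponents collapse to $-\alpha\rho$ and $\alpha$, which is precisely \eqref{asymptotics_mu}. The main obstacle I anticipate is the residue bookkeeping in part~(i): tracking the phase contributions from $u_\pm$, from the prefactor $\sin(\pi\alpha\rho)$, from $S_2(1) = \sqrt{\alpha}$, and from each application of the functional equations, so as to collapse them into the compact form \eqref{residue_mu}.
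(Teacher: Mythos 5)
Your proposal is correct and follows essentially the same route as the paper: rewrite $\mu(u)$ with $w = \i\alpha\ln(u)/(2\pi)$, locate the nearest poles of $S_2(a\pm w)$ using the lattice $\{m\alpha+n : m,n\ge 1\}$, extract the residue via the second functional equation together with $S_2(1)=\sqrt{\alpha}$ and $S_2(\alpha+\alpha\rho)=S_2(\alpha\rho)/(2\sin(\pi\alpha\rho))$, and read off part~(ii) directly from \eqref{S_2_asymptotics}. The only cosmetic difference is that you carry out the residue computation in the $w$-variable and then convert via $u-u_-\sim -2\pi\i u_-(w-w_-)/\alpha$, whereas the paper differentiates the sine factor with respect to $u$ directly; the two computations are equivalent and both reduce to the same phase bookkeeping.
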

\begin{proof}
Let us rewrite
 the expression in \eqref{def_mu} in the form
\begin{equation}\label{def_mu2}
\mu(u)= C u^{\alpha \hat \rho/2} 
S_2(1/2+\alpha+\alpha\rho/2 + \i \alpha \ln(u)/(2\pi))
S_2(1/2+\alpha+\alpha\rho/2 - \i \alpha \ln(u)/(2\pi)).
\end{equation}
The double sine function $S_2(z)$ is a meromorphic function which has poles at points 
$\{m+\alpha n \, : \, m,n\ge 1\}$, and the pole at $z=1+\alpha$ is simple (see Appendix \ref{AppendixA}). This implies that the function $\mu(u)$ is analytic in the sector $|\arg(u)|<\eta := \pi/\alpha-\pi\rho+2\pi\min(1,1/\alpha)$, except for the two simple poles at $u=u_{\pm}$. It is easy to see that $\eta \ge \pi/\alpha + \pi \hat{\rho}$. Let us compute the residue at $u=u_-$. 
In this case the function $S_2(1/2+\alpha+\alpha\rho/2-\i \alpha \ln(u)/(2\pi))$ is analytic in the neighborhood of $u=u_-$ and the pole comes from the other factor $S_2(1/2+\alpha+\alpha\rho/2+ \i \alpha \ln(u)/(2\pi))$, which we transform with the help of 
\eqref{S2_two_functional_eqns} as follows
$$
S_2(1/2+\alpha+\alpha\rho/2+\i \alpha \ln(u)/(2\pi))=\frac{S_2(1/2+\alpha\rho/2+\i \alpha \ln(u)/(2\pi))}
{2\sin(\pi(1/2+\alpha\rho/2+\i \alpha \ln(u)/(2\pi))}.
$$
Note that 
$$
\frac{\d}{\d u} \left[ 2\sin(\pi(1/2+\alpha\rho/2+\i \alpha \ln(u)/(2\pi)) \right] \; \Big \vert_{u=u_-}=
\frac{\alpha}{\i u_-} ,
$$
thus 
\begin{align*}
{\textnormal{Res}}(\mu(u) \, : \, u=u_-)&=
\frac{\sin (\pi \alpha \rho)}{\pi} \times 
\frac{\i u_-}{\alpha} \times  (u_-)^{\alpha \hat\rho/2} 
S_2(1/2+\alpha+\alpha\rho/2-\i \alpha \ln(u_-)/(2\pi))\\
&\qquad \qquad \qquad \qquad \qquad \;\;\;\;\;\;\times
S_2(1/2+\alpha\rho/2+\i \alpha \ln(u_-)/(2\pi))\\
&=\frac{\sin(\pi \alpha \rho)}{\pi \alpha}
e^{\pi \i/2-\pi \i (1/\alpha-\rho)(1+\alpha \hat \rho/2)} S_2(\alpha+\alpha \rho) S_2(1)\\
&=\frac{S_2(\alpha \rho)}{2\pi \sqrt{\alpha}}
e^{\pi \i(\alpha \rho \hat \rho/2+3 \rho/2-1/\alpha)},
\end{align*}
where in the last step we have used the functional equation \eqref{S2_two_functional_eqns} and formula \eqref{S2_special_values}. Thus we have proved  
\eqref{residue_mu} for $u=u_-$, and the result for $u=u_+$ follows by taking the complex conjugate. 

The result in item (ii) follows at once from  \eqref{S_2_asymptotics} and \eqref{def_mu2}. 
\end{proof}

\begin{lemma}\label{lemma_5} Assume that $(\alpha,\rho)\in {\mathcal A}$. 
\begin{itemize}
\item[(i)] The function $f_{\overline X}(x)$ can be extended to an analytic function in the sector 
$|\arg(x)|<\pi(1/\alpha+\hat \rho)$, $x\in {\mathcal R}$.
\item[(ii)] There exists a constant $C=C(\alpha,\rho)$  such that for all $x \in {\mathcal R}$ in the sector $|\arg(x)|<\pi(1/\alpha+\hat \rho)$ we have
\begin{align}\label{f_X_upper_bound}
|f_{\overline X}(x)|<
\begin{cases}
C \min(|x|^{\alpha \rho-1}, |x|^{-\alpha-1}), &  {\textnormal{ if }} |\arg(x)|< \pi (1/\alpha-\rho), \\
C \min(|x|^{\alpha \rho-1}, |x|^{-\alpha-1})+C e^{- \xi |x|}, \; &{\textnormal{ if }} |\arg(x)|\ge \pi (1/\alpha-\rho),
\end{cases}
\end{align}  
where we have denoted $\xi:=\cos(|\arg(x)|-\pi(1/\alpha-\rho))$. 
\item[(iii)] For $x>0$ 
\begin{equation}\label{p_piialpha_F}
 e^{\pi \i/\alpha} f_{\overline X}(e^{\pi \i /\alpha} x)=
\frac{2}{\sqrt{\alpha}} S_2(1+\alpha \rho)
\left(\hat F(x)+e^{\pi \i \rho}\hat F'(x)\right). 
\end{equation}
\end{itemize}
The corresponding results for $f_{\underline X}(x)$ can be obtained by changing $\rho \mapsto \hat \rho$ and $\hat F\mapsto F$. 
\end{lemma}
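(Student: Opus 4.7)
All three assertions will follow from a single contour-rotation device applied to the mixture representation \eqref{px_completely_monotone}. For $x \in {\mathcal R}$ with $\theta = \arg(x) \in (0, \pi(1/\alpha + \hat\rho))$, I would choose $\psi \in [0, \pi(1/\alpha + \hat\rho))$ with $|\theta - \psi| < \pi/2$ (such $\psi$ always exists; e.g. $\psi = \theta$) and apply Cauchy's theorem to the sector bounded by $\arg(u) \in \{0, -\psi\}$. The small- and large-arc contributions vanish by the polynomial decay of $\mu$ from Lemma \ref{lemma_4}(ii) combined with the uniform positivity of $\cos(\theta - \psi'')$ on the intermediate angles $\psi'' \in [0,\psi]$ (guaranteed by $|\theta - \psi| < \pi/2$), yielding the template identity
\[
f_{\overline{X}}(x) = e^{-\i \psi}\int_0^\infty \mu(s e^{-\i \psi})\, e^{-x s e^{-\i \psi}}\, \d s - 2 \pi \i \, \ind_{\{\psi > \pi(1/\alpha - \rho)\}}\, {\textnormal{Res}}\bigl(\mu(u) e^{-xu} \, : \, u = u_-\bigr),
\]
where $u_\pm = \exp(\pm \pi \i (1/\alpha - \rho))$ are the poles of $\mu$ from Lemma \ref{lemma_4}(i). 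An analogous formula with $u_+$ in place of $u_-$ handles $\theta \in (-\pi(1/\alpha + \hat\rho), 0)$. A further application of Cauchy's theorem shows that the right-hand side is independent of the choice of $\psi$ (the residue switches on precisely when $\psi$ sweeps across $u_-$), so the identity defines an analytic continuation of $f_{\overline{X}}$ to the full sector $|\arg x| < \pi(1/\alpha + \hat \rho)$, proving (i).

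For (ii), take $\psi = \theta$, so that $|e^{-x s e^{-\i\psi}}| = e^{-|x|s}$. The uniform bound $|\mu(s e^{-\i\psi})| \le C \min(s^\alpha, s^{-\alpha\rho})$ from Lemma \ref{lemma_4}(ii) then reduces the estimate of the integral piece to the elementary Laplace-type bound
\[
\int_0^\infty \min(s^\alpha, s^{-\alpha\rho})\, e^{-|x|s}\, \d s \le C' \min\bigl(|x|^{\alpha\rho-1}, |x|^{-\alpha-1}\bigr),
\]
obtained by splitting the integral at $s = 1/|x|$. For $|\theta| > \pi(1/\alpha - \rho)$, the residue term satisfies $|e^{-xu_\pm}| = \exp(-|x|\cos(|\theta| - \pi(1/\alpha - \rho))) = e^{-\xi|x|}$, producing the second clause of \eqref{f_X_upper_bound}.

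Part (iii) is the main computation. I would specialise the template to $x > 0$ with $\theta = \pi/\alpha$ and $\psi = \pi/\alpha$; since $\rho > 0$, this rotation crosses $u_-$ and triggers the residue. The key simplification is $e^{\pi \i /\alpha} x \cdot s e^{-\pi \i /\alpha} = xs$, which makes the rotated integrand a plain Laplace kernel. Substituting $u = s e^{-\pi \i /\alpha}$ into \eqref{def_mu2} and applying the first functional equation of \eqref{S2_two_functional_eqns} to absorb the resulting unit shift in one of the $S_2$-factors converts $\mu(s e^{-\pi \i /\alpha})$ into
\[
-\frac{\sin(\pi \alpha \rho)}{\pi} \bigl(s^{(\alpha \hat \rho - 1)/2} - e^{\pi \i \rho} s^{(\alpha \hat \rho + 1)/2}\bigr) \bigl| S_2\bigl(1+\alpha+\alpha\rho/2+\i\alpha\ln(s)/(2\pi)\bigr)\bigr|^2.
\]
The two resulting Laplace integrals are $\hat G(x)$ and $-\hat G'(x)$ by definition \eqref{def_g} (with $\rho \leftrightarrow \hat\rho$), so the integral part of $e^{\pi\i/\alpha} f_{\overline{X}}(e^{\pi\i/\alpha} x)$ equals $-\tfrac{\sin(\pi\alpha\rho)}{\pi}(\hat G(x) + e^{\pi\i\rho}\hat G'(x))$. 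The residue, computed from \eqref{residue_mu} and simplified by the functional equation identity $2\sin(\pi\rho) S_2(1+\alpha\rho) = S_2(\alpha\rho)$, reproduces exactly the exponential-sine part of $\tfrac{2}{\sqrt\alpha} S_2(1+\alpha\rho)(\hat F(x) + e^{\pi\i\rho}\hat F'(x))$. The identity $S_2(1+\alpha\rho) S_2(-\alpha\rho) = -2\sin(\pi\alpha\rho)$, which follows from \eqref{S2_reflection_formula} combined with the second functional equation of \eqref{S2_two_functional_eqns}, then turns the coefficient of $\hat G + e^{\pi\i\rho}\hat G'$ on the right-hand side of \eqref{p_piialpha_F} into $-\sin(\pi\alpha\rho)/\pi$, matching the integral piece.

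The hardest step will be the phase bookkeeping in (iii). The factor $e^{\pi \i (\alpha\rho\hat\rho/2 + 3\rho/2 - 1/\alpha)}$ from \eqref{residue_mu} must combine with $e^{\pi \i /\alpha}$ and with the exponential-sine part of $\hat F + e^{\pi\i\rho}\hat F'$ so that the phases align perfectly, and the two independent invocations of the $S_2$ functional equations (one inside the integrand, one on the coefficient of the right-hand side) must yield precisely compensating factors of $-2\sin(\pi\alpha\rho)$. Given the intertwining of \eqref{S2_two_functional_eqns}, \eqref{S2_reflection_formula}, and \eqref{residue_mu}, a careful bookkeeping of signs and phases will be required.
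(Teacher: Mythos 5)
Your proposal follows the same broad strategy as the paper — rotate the contour in the mixture representation \eqref{px_completely_monotone}, pick up a residue when the rotation sweeps past $u_-$, and then specialise to $\psi=\pi/\alpha$ to obtain \eqref{p_piialpha_F} — and the template identity you write down, together with all of the bookkeeping in part (iii), is correct. However, the single-pass justification you offer for parts (i) and (ii) has two genuine gaps.

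First, the ``single contour-rotation device'' does not actually apply Cauchy's theorem in one step once $|\arg(x)|>\pi/2$. Your claim that the arc contributions vanish because $\cos(\theta-\psi'')>0$ uniformly for $\psi''\in[0,\psi]$ is only ``guaranteed by $|\theta-\psi|<\pi/2$'' at the endpoint $\psi''=\psi$. At the other endpoint $\psi''=0$, one needs $\cos(\theta)>0$, i.e.\ $|\theta|<\pi/2$. When $|\theta|>\pi/2$ the integrand $\mu(u)e^{-xu}$ on the original ray $\arg(u)=0$ already grows exponentially, so there is no convergent integral from which to start the rotation, and Cauchy's theorem cannot be applied on a single sector reaching from $\arg(u)=0$ to $\arg(u)=-\theta$. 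The paper avoids this by bootstrapping: rotate by at most $\pi/2$ at a time, each time obtaining a new convergent integral representation valid on a wider sector, and repeat until the full sector $|\arg(x)|<\pi(1/\alpha+\hat\rho)$ is covered (together with a Schwarz reflection for negative arguments). Your template formula is the \emph{output} of this bootstrapping, not something one derives in one Cauchy application.

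Second, in part (ii) you invoke the bound $|\mu(se^{-\i\psi})|\le C\min(s^\alpha,s^{-\alpha\rho})$ uniformly in $\psi$, but this fails on rays that pass through or near the simple pole of $\mu$ at $u_-=e^{-\pi\i(1/\alpha-\rho)}$: as $\psi\to\pi(1/\alpha-\rho)$ the constant $C$ blows up (the pole sits at $|u|=1$, where $\min(s^\alpha,s^{-\alpha\rho})\asymp 1$). Choosing $\psi=\theta$ therefore produces no uniform constant $C(\alpha,\rho)$ when $\theta$ is near $\pi(1/\alpha-\rho)$. The paper handles this by keeping the rotated ray a fixed angular distance (there, $\pi/8$) away from $\arg(u)=-\pi(1/\alpha-\rho)$, which forces one to decouple $\psi$ from $\theta$ and to cover the $x$-sector by finitely many overlapping sub-sectors (with $|\arg y|<\pi/4$ relative to each base direction); the residue term, when present, is then controlled by $e^{-\xi|x|}$ exactly as you describe. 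With these two repairs your argument reproduces the paper's proof; part (iii) is the same computation as the paper's and is correct as sketched.
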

\begin{proof}
Let us first prove part (i). 
We start with the equation \eqref{px_completely_monotone}, which defines $f_{\overline X}(x)$ as an analytic function in the right half-plane $\re(x)>0$. Assume that $x$ lies in the first quadrant, that is $\arg(x) \in (0,\pi/2)$. Choose any $\beta \in (0,\pi/2)$ such that $\beta\neq \pi(1/\alpha-\rho)$. Applying Lemmas \ref{lemma_1} and \ref{lemma_4}, we rotate the contour of integration in \eqref{px_completely_monotone} by angle $\beta$ in the clockwise direction, 
so that $\r^+ \mapsto e^{-\i \beta} \r^+$. Taking into account the pole of $\mu(u)$ at $u=u_-$ (which will lie in the sector $-\beta < \arg(u)< 0$ if $\beta>\pi(1/\alpha-\rho)$) we obtain the following identity
\begin{equation}\label{f_x_rotation_of_contour}
f_{\overline X}(x)=e^{-\i \beta} \int_0^{\infty} e^{-e^{-\i \beta} z x } \mu(e^{- \i \beta} z)\d z -
2 \pi \i \times  {\textnormal{Res}}(\mu(u) \, : \, u=u_-) e^{-(u_-) x} {\bf 1}_{\{\beta > \pi(1/\alpha -\rho)\}}. 
\end{equation}
 Note that the integral in \eqref{f_x_rotation_of_contour} converges if $\re(e^{-\i \beta} x)>0$, thus we have obtained an analytic continuation of $f(x)$ into the half-plane $\arg(x) \in (\beta-\pi/2, \beta+\pi/2)$. If $\beta + \pi/2< \pi(1/\alpha+\hat \rho)$, we can repeat the above procedure: choose $x$ in the sector $\arg(x) \in (\beta, \beta+\pi/2)$ and rotate the contour of integration by an angle $\beta$ clockwise. Eventually we will cover the whole sector $\arg(x) \in (-\pi/2, 1/\alpha+\hat \rho)$. Finally, we extend $f_{\overline X}(x)$ into the sector $\arg(x) \in (-1/\alpha-\hat \rho,\pi/2)$ by the symmetry principle (the conjugate of  
${f_{\overline X}(x)}$ is $f_{\overline X}(\bar x)$).  This ends the proof of part (i).

Let us now prove part (ii). Everywhere in this proof we will denote by $A_i$ some positive constants which can depend only on $(\alpha,\rho)$. We choose any $\beta \in (0,\pi(1/\alpha-\rho-1/8))$ (if this interval is empty we take $\beta=0$). We set $x=e^{ \i \beta} y$ in \eqref{f_x_rotation_of_contour} and rewrite that equation in the form  
\begin{equation}\label{f_x_rotation_of_contour2}
f_{\overline X}(e^{\i \beta} y)=e^{- \i \beta} \int_0^{\infty} e^{- z y } \mu(e^{- \i \beta} z)\d z.  
\end{equation}
Note that $\beta < \pi( 1/\alpha-\rho)$, so the second term in \eqref{f_x_rotation_of_contour} vanishes. 
Formula \eqref{f_x_rotation_of_contour2} is valid in the half-plane $\re(y)>0$. The function $\mu(u)$ is analytic in the sector 
$|\arg(u)|<\pi (1/\alpha-\rho)$, which includes the sector $|\arg(u)|\le \pi(1/\alpha-\rho-1/8)$. Given the behavior of $\mu(u)$ as $u\to 0^+$ or 
$u\to +\infty$ given in  \eqref{asymptotics_mu}, we see that there must exist a constant $A_1$ such that 
\begin{equation}\label{mu_bound}
|\mu(u)|<A_1 \min(|u|^{\alpha},|u|^{-\alpha \rho}), 
\end{equation}
for all $u$ in the sector $|\arg(u)|\le \pi (1/\alpha-\rho-1/8)$. 
Note that $\re(y)>|y|/2$ in the sector $|\arg(y)|<\pi/4$, thus for all $y$ in the sector $|\arg(y)|<\pi/4$ we have
\begin{equation}\label{f_upper_bound}
|f_{\overline X}(e^{\pi \i \beta} y)| < A_1 \int_0^{\infty} e^{- z \times |y|/2}  \min(z^{\alpha},z^{-\alpha \rho}) \d z.
\end{equation}
 We leave it as an exercise to check that the above estimate implies that there exists a constant $A_2$ such that
$$
|f_{\overline X}(e^{\pi \i \beta} y)|<A_2 \min(|y|^{\alpha\rho-1}, |y|^{-\alpha-1}), 
$$ 
for $|\arg(y)|<\pi/4$. 
Since the sector $ |\arg(x)| < \pi(1/\alpha-\rho+1/8)$ can be covered by finitely many sectors of angle $\pi/2$, we have proved that there exists some constant $A_3$ such that for all $x$ in the sector 
$|\arg(x)|<\pi (1/\alpha-\rho+1/8)$ we have 
$$
|f_{\overline X}(x)|<A_3 \min(|x|^{\alpha\rho-1}, |x|^{-\alpha-1}). 
$$

\begin{figure}
\centering
\includegraphics[height =7cm]{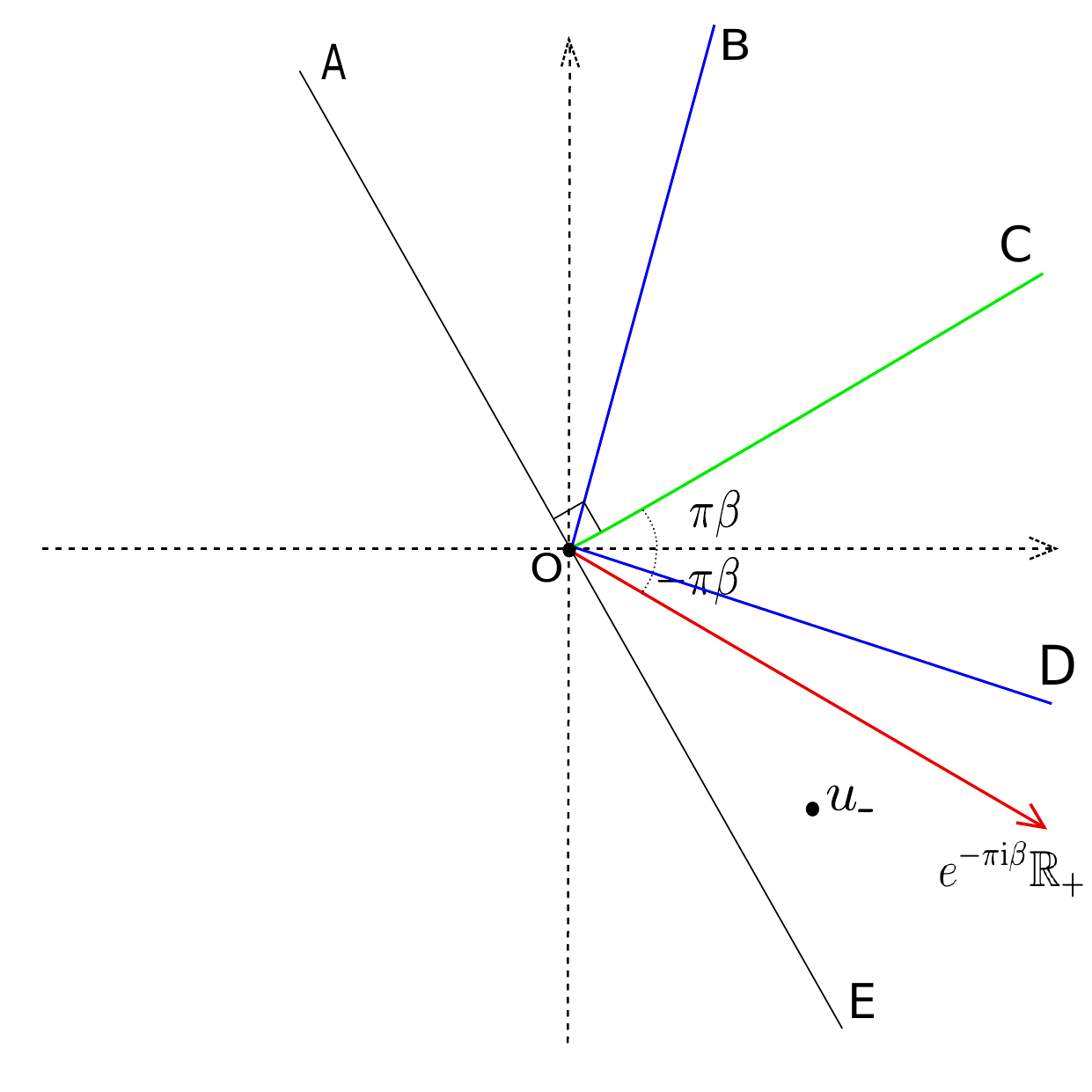}
\caption{Illustration to the proof of Lemma \ref{lemma_5}.}
\label{fig_1}
\end{figure}

Next, we take any $\beta \in (\pi(1/\alpha-\rho+1/8),\pi(1/\alpha-\rho+1))$ and repeat the same procedure. We set $x=e^{\i \beta} y$ in \eqref{f_x_rotation_of_contour} and rewrite that equation in the form  
\begin{equation}\label{f_x_rotation_of_contour3}
f_{\overline X}(e^{\i \beta} y)=e^{-\i \beta} \int_0^{\infty} e^{- z y } \mu(e^{-\i \beta} z)\d z
-2 \pi \i \times  {\textnormal{Res}}(\mu(u) \, : \, u=u_-) e^{-(u_-) e^{\i \beta} y}. 
\end{equation}
The integral term is estimated as above, and the exponential term is estimated as 
$$
|e^{-(u_-) e^{\i \beta} y}|=\exp(-\re(e^{-\pi \i (1/\alpha-\rho)+ \i \beta+\i \arg(y)}) |y|)
=\exp(-\cos(\beta-\pi (1/\alpha-\rho)) |y|). 
$$
This ends the proof of item (ii). 

The ideas for the above proof of item (ii) are illustrated in Figure \ref{fig_1}. The contour of integration in \eqref{f_x_rotation_of_contour}  is over the red line $e^{-\pi \i \beta} \r^+$. With this contour of integration, the integral representation \eqref{f_x_rotation_of_contour} is valid in the half-plane AOE, and we obtain asymptotic estimates of 
$f_{\overline X}(x)$ in the sector BOD. The asymptotic estimates can be made uniform, as long as the contour of integration
$e^{-\pi \i \beta} \r^+$ does not pass too close to the pole of $\mu(u)$ as $u=u_-$.

Let us now prove part (iii). Setting $\beta = 1/\alpha$ in the equation \eqref{f_x_rotation_of_contour3} and using 
formula \eqref{residue_mu} we obtain
\begin{equation}\label{p_rotate_eqn1}
e^{\pi \i/\alpha} f_{\overline X}(e^{\pi \i /\alpha} x)=
- \i \frac{S_2(\alpha \rho)}{\sqrt{\alpha}}\exp(-xe^{\pi \i \rho} + \pi \i \alpha \rho \hat \rho/2+3\pi \i \rho/2)
+\int_0^{\infty} e^{-z x} \mu(e^{-\pi \i/\alpha}z) \d z.
\end{equation}
Next, using \eqref{def_mu} we compute 
\begin{equation*}
\mu(e^{-\pi \i/\alpha} z)=
\frac{1}{\pi} \sin(\pi \alpha \rho) z^{\alpha \hat \rho/2} e^{-\pi \i \hat \rho/2}
S_2(\alpha+\alpha \rho/2 -\i \alpha \ln(z)/(2\pi ))S_2(1+\alpha+\alpha \rho/2+\i \alpha\ln(z)/(2\pi)).
\end{equation*}
The functional equation \eqref{S2_two_functional_eqns} gives us the identity
\begin{align*}
S_2(\alpha+\alpha \rho/2-\i \alpha \ln(z)/(2\pi))&=2 \sin(\pi(1+\rho/2-\i  \ln(z)/(2\pi))) S_2(1+\alpha+\alpha \rho/2-\i \alpha \ln(z)/(2\pi))\\
&=\i  \left(e^{\pi \i \rho/2} z^{1/2}-e^{-\pi \i \rho/2} z^{-1/2}\right)
S_2(1+\alpha+\alpha \rho/2-\i \alpha \ln(z)/(2\pi))
\end{align*}
and simplifying the result we obtain
\begin{equation*}
\mu(e^{-\pi \i/\alpha} z)=\frac{1}{\pi} \sin(\pi \alpha \rho)
z^{\alpha \hat \rho/2-1/2}(e^{\pi \i \rho} z-1) |S_2(1+\alpha+\alpha \rho/2+\i \alpha \ln(z)/(2\pi \i))|^2 .
\end{equation*}
Combining the above result with \eqref{p_rotate_eqn1} and \eqref{def_g} we get 
\begin{align*}
e^{\pi \i/\alpha} f_{\overline X}(e^{\pi \i /\alpha} x)&=
- \i \frac{S_2(\alpha \rho)}{\sqrt{\alpha}}\exp(-xe^{\pi \i \rho} + \pi \i \alpha \rho \hat \rho/2+3\pi \i \rho/2)-
\frac{1}{\pi} \sin(\pi \alpha \rho) (\hat G(x)+e^{\pi \i \rho} \hat G'(x))
\\
&=\frac{2}{\sqrt{\alpha}} S_2(1+\alpha \rho)
\times \Big[-\i \sin(\pi \rho)\exp(-xe^{\pi \i \rho} + \pi \i \alpha \rho \hat \rho/2+3\pi \i \rho/2)
\\
&\qquad\qquad\qquad\qquad\;\;\;\;\;-\frac{\sqrt{\alpha} \sin(\pi \alpha \rho)}{2\pi S_2(1+\alpha \rho)}(\hat G(x)+e^{\pi \i \rho} \hat G'(x))  \Big].
\end{align*}
Using formulas \eqref{S2_two_functional_eqns} and \eqref{S2_reflection_formula} we check that 
$$
-\frac{\sqrt{\alpha} \sin(\pi \alpha \rho)}{2\pi S_2(1+\alpha \rho)}=\frac{\sqrt{\alpha}}{4\pi} S_2(-\alpha \rho),
$$
and we leave it as an exercise to verify that 
$$
-\i \sin(\pi \rho)\exp(-xe^{\pi \i \rho} + \pi \i \alpha \rho \hat \rho/2+3\pi \i \rho/2)=\hat{K}(x)+e^{\pi \i \rho} \hat{K}'(x),
$$
where 
$$
\hat{K}(x):=e^{-x \cos(\pi \rho)} \sin(x \sin(\pi \rho)+\pi \hat \rho(1- \alpha \rho)/2). 
$$
The above four formulas combined with the definition of $\hat F(x)$ (see formula \eqref{def_F_eigenfunction}) imply the desired identity \eqref{p_piialpha_F}.
\end{proof}

\begin{remark}
The proof of part (i) of Lemma \ref{lemma_5} shows that $f_{\overline X}(x)$ can in fact be extended to an analytic function on the whole of 
${\mathcal R}$, though this result will not be used anywhere in this paper. Also, note that \eqref{p_piialpha_F} implies the following results,
\begin{align*}
\frac{2}{\sqrt{\alpha}} S_2(1+\alpha \rho) \hat F(x)&=e^{\pi \i (1/\alpha-\rho)}  
\int_0^x \exp(e^{-\pi \i \rho} (y-x)) f_{\overline X}(e^{\pi \i/\alpha} y) \d y\\
&=-\frac{1}{\sin(\pi \rho)}\im\bigl(e^{\pi \i/\alpha - \pi \i \rho} f_{\overline X}(e^{\pi \i /\alpha} x)\bigr),
\end{align*}
which establish a direct link between the spectral theory of stable processes
on the half-line and the Wiener-Hopf factorization theory. 
\end{remark}

Let us consider what happens to the asymptotic behavior of the function $f_{\overline X}(x)$ (as $|x| \to +\infty$) as we increase $|\arg(x)|$. Lemma  \ref{lemma_5}(ii) tells us 
that there is a transition from power-type decay $|q|^{-\alpha-1}$ to exponential growth 
$\exp(-\xi |q|)$ which occurs at the critical level $|\arg(x)|=\sigma:=\pi (1/\alpha-\rho+1/2)$. For values of $|\arg(x)|<\sigma$
we have $\xi>0$, thus we have a power-type decay; precisely at the critical level 
$|\arg(x)|=\sigma$ we have $\xi=0$ which results in a bounded oscillatory behavior; finally, when $|\arg(x)|$ exceeds the critical level $\sigma$ we have $\xi \in (-1,0)$ 
and $f_{\overline X}(x)$ exponentially increases and oscillates. It is easy to check that for all $(\alpha,\rho) \in {\mathcal A}$ we have
$\sigma>\pi/(2\alpha)$. Using this result we rewrite the upper bound in Lemma  \ref{lemma_5}(ii) in the following less informative (but more useful) form:
\begin{corollary}\label{corollary2}
 There exist constants $C=C(\alpha,\rho)>0$ and $\theta=\theta(\alpha,\rho)>0$ 
such that for all $q$ in the sector $|\arg(q)|<\pi/2+\theta$ we have
\begin{align}\label{f_X_upper_bound1}
|f_{\overline X}(q^{1/\alpha})|\le C \min(|q|^{\rho-1/\alpha}, |q|^{-1-1/\alpha}),
\end{align}  
and for all $q$ in the sector $\pi/2+\theta<|\arg(q)|\le \pi$ we have
\begin{align}\label{f_X_upper_bound2}
|f_{\overline X}(q^{1/\alpha})| \le 
\begin{cases}
C \min(|q|^{\rho-1/\alpha}, |q|^{-1-1/\alpha}), \; &  {\textnormal{ if }} \rho < 1/2, \\
C (|q|^{\rho-1/\alpha}+1), \; &  {\textnormal{ if }} \rho = 1/2, \\
C (|q|^{\rho-1/\alpha}+\exp(|q|^{1/\alpha})) \; &{\textnormal{ if }} \rho>1/2. 
\end{cases}
\end{align} 
The corresponding results for $f_{\underline X}(x)$ can be obtained by changing $\rho \mapsto \hat \rho$.
\end{corollary}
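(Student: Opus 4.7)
\textbf{Proof plan for Corollary \ref{corollary2}.} The entire result is obtained by specializing Lemma \ref{lemma_5}(ii) to $x = q^{1/\alpha}$, which gives $|x| = |q|^{1/\alpha}$ and $\arg(x) = \arg(q)/\alpha$. First I would verify that the hypothesis $|\arg(x)| < \pi(1/\alpha + \hat{\rho})$ is satisfied throughout $|\arg(q)| \le \pi$: this amounts to $|\arg(q)| < \pi(1 + \alpha\hat{\rho})$, which is automatic since $\alpha\hat{\rho} > 0$. Under this substitution the polynomial factor becomes
\[
C \min(|x|^{\alpha\rho - 1}, |x|^{-\alpha-1}) = C \min(|q|^{\rho - 1/\alpha}, |q|^{-1 - 1/\alpha}),
\]
which already matches the polynomial part of the stated bounds. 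The exponential correction becomes $C e^{-\xi |q|^{1/\alpha}}$ with $\xi = \cos(|\arg(q)|/\alpha - \pi(1/\alpha - \rho))$, and the whole problem is reduced to controlling this factor in the various sub-sectors.

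For the inner sector $|\arg(q)| < \pi/2 + \theta$, the plan is to choose $\theta > 0$ so small that the angle $|\arg(q)|/\alpha - \pi(1/\alpha - \rho)$ stays strictly below $\pi/2$, forcing $\xi \ge \xi_0 > 0$ throughout. Explicitly this requires $\theta < \pi(1 + \alpha - 2\alpha\rho)/2$, and the right-hand side is positive precisely because $\alpha\rho < (\alpha+1)/2$ holds on $\mathcal{A}$ (for $\alpha \le 1$ this follows from $\rho < 1$, and for $\alpha > 1$ from the admissibility constraint $\rho < 1/\alpha$). Once $\xi \ge \xi_0 > 0$ is secured, the exponential $e^{-\xi_0 |q|^{1/\alpha}}$ must be absorbed into $\min(|q|^{\rho - 1/\alpha}, |q|^{-1 - 1/\alpha})$. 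For $|q| \le 1$ this is immediate, since $\rho < 1/\alpha$ on $\mathcal{A}$ implies $|q|^{\rho - 1/\alpha} \ge 1 \ge e^{-\xi_0 |q|^{1/\alpha}}$; for $|q| \ge 1$ it follows from the super-polynomial decay $e^{-\xi_0 t} \le C t^{-\alpha - 1}$ applied to $t = |q|^{1/\alpha}$.

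For the outer sector $\pi/2 + \theta < |\arg(q)| \le \pi$ I would carry out case analysis on $\rho$, using monotonicity of $\xi$ along rays of decreasing $|\arg(q)|/\alpha - \pi(1/\alpha - \rho)$ so that the extreme value occurs at $|\arg(q)| = \pi$, where $\xi = \cos(\pi \rho)$. If $\rho < 1/2$ then $\cos(\pi\rho) > 0$, so $\xi$ remains bounded below by a positive constant on the whole outer sector and the exponential is absorbed exactly as in the inner sector, yielding the same polynomial bound. If $\rho = 1/2$ then $\xi \ge 0$ throughout, giving $e^{-\xi |q|^{1/\alpha}} \le 1$ and hence the bound $|q|^{\rho - 1/\alpha} + 1$ after using $\min(|q|^{\rho - 1/\alpha}, |q|^{-1 - 1/\alpha}) + 1 \le |q|^{\rho - 1/\alpha} + 1$. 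If $\rho > 1/2$ then $\xi \in (-1, 0]$, so $e^{-\xi |q|^{1/\alpha}} \le e^{|q|^{1/\alpha}}$, yielding the third bound.

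The conversion from $x$ back to $q$ and the case analysis at $|\arg(q)| = \pi$ are essentially bookkeeping; the one step requiring genuine care is selecting $\theta$ to guarantee $\xi \ge \xi_0 > 0$ on the inner sector, and this is exactly where the admissibility inequality $\alpha\rho < (\alpha+1)/2$ enters. Once $\theta$ is fixed, everything else follows by the elementary observation that a decaying exponential of $|q|^{1/\alpha}$ is dominated by any polynomial, while an oscillating or growing exponential at the outer boundary is controlled by $e^{|q|^{1/\alpha}}$ since $|\cos(\pi\rho)| < 1$.
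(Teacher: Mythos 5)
Your proposal is correct and follows essentially the same route as the paper: substitute $x = q^{1/\alpha}$ into Lemma~\ref{lemma_5}(ii), and note that the critical angle $\sigma = \pi(1/\alpha - \rho + 1/2)$ (in the $x$-variable) satisfies $\sigma > \pi/(2\alpha)$ on $\mathcal{A}$, which is exactly your inequality $\alpha\rho < (\alpha+1)/2$ and gives room to choose $\theta > 0$ so that $\xi$ stays bounded below by a positive constant on the inner sector. The case analysis on $\rho$ at the ray $|\arg(q)| = \pi$ via $\xi = \cos(\pi\rho)$ is also what the paper's surrounding discussion uses; your write-up simply makes the absorption of the exponential into the polynomial bound (for $|q| \le 1$ versus $|q| \ge 1$) more explicit than the paper does.
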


The next corollary will play a crucial role in the proof of Theorem \ref{thm_main}(ii) and Theorem \ref{theorem_2}(ii). 
\begin{corollary}\label{corollary_lucky_integral}
Assume that $(\alpha,\rho)\in {\mathcal A}$. Then for all $x,y>0$  we have 
\begin{equation}\label{lucky_integral}
\int_0^{\min(x,y)}
\im \left[ e^{2\pi \i/\alpha}  f_{\underline X}((x-z)  e^{\pi \i/\alpha} )
f_{\overline X}((y-z) e^{\pi \i/\alpha})\right]\d z=
\frac{2}{\alpha}
 F(x)\hat F(y). 
\end{equation}
\end{corollary}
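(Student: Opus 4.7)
\textbf{Proof plan for Corollary \ref{corollary_lucky_integral}.} The plan is to substitute the identity \eqref{p_piialpha_F} of Lemma \ref{lemma_5}(iii), together with its dual (obtained by swapping $\rho\leftrightarrow\hat\rho$, $F\leftrightarrow \hat F$), directly into the integrand, and then reduce the result to a total derivative in $z$. Setting
\[
A := \tfrac{4}{\alpha} S_2(1+\alpha\rho)S_2(1+\alpha\hat\rho),
\]
I compute
\[
e^{2\pi\i/\alpha} f_{\underline X}((x-z)e^{\pi\i/\alpha})f_{\overline X}((y-z)e^{\pi\i/\alpha}) = A\bigl(F(x-z)+e^{\pi\i\hat\rho}F'(x-z)\bigr)\bigl(\hat F(y-z)+e^{\pi\i\rho}\hat F'(y-z)\bigr).
\]
Since $F$ and $\hat F$ are real on $(0,\infty)$, the terms $F(x-z)\hat F(y-z)$ and $e^{\pi\i(\rho+\hat\rho)}F'(x-z)\hat F'(y-z) = -F'(x-z)\hat F'(y-z)$ are real and contribute nothing to the imaginary part. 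The two cross terms carry the factors $\im(e^{\pi\i\rho}) = \sin(\pi\rho)$ and $\im(e^{\pi\i\hat\rho}) = \sin(\pi\hat\rho) = \sin(\pi\rho)$, so
\[
\im[\cdots] = A\sin(\pi\rho)\bigl(F(x-z)\hat F'(y-z) + F'(x-z)\hat F(y-z)\bigr).
\]

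The bracketed quantity is precisely $-\tfrac{d}{dz}\bigl[F(x-z)\hat F(y-z)\bigr]$. Integrating from $0$ to $\min(x,y)$ therefore yields
\[
\int_0^{\min(x,y)}\im[\cdots]\,\d z = A\sin(\pi\rho)\bigl(F(x)\hat F(y) - F(x-m)\hat F(y-m)\bigr),\qquad m=\min(x,y).
\]
At the upper endpoint, either $x-m=0$ or $y-m=0$; by Lemma \ref{lemma_3} (and its obvious analogue for $\hat F$), $F(0^+)=\hat F(0^+)=0$, so the boundary contribution at $z=m$ vanishes.

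The final step is to verify that $A\sin(\pi\rho) = 2/\alpha$, i.e.\ $S_2(1+\alpha\rho)S_2(1+\alpha\hat\rho)=1/(2\sin(\pi\rho))$. Applying the first functional equation in \eqref{S2_two_functional_eqns} with $z=\alpha\rho$ gives $S_2(1+\alpha\rho) = S_2(\alpha\rho)/(2\sin(\pi\rho))$, which reduces the claim to $S_2(\alpha\rho)S_2(1+\alpha\hat\rho)=1$; this is the reflection identity \eqref{S2_reflection_formula} applied at $z=\alpha\rho$ (the same instance already used in the proof of Theorem \ref{thm_phi_S2}). No serious obstacle is expected: the single substantive step is spotting the telescoping structure after taking imaginary parts, and the computation is otherwise routine given Lemmas \ref{lemma_3} and \ref{lemma_5}(iii).
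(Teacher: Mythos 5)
Your proof is correct and follows essentially the same path as the paper's: expand both factors via Lemma \ref{lemma_5}(iii), take imaginary parts (the diagonal terms are real, the cross terms each contribute $\sin(\pi\rho)$), recognize the total derivative $-\tfrac{\d}{\d z}[F(x-z)\hat F(y-z)]$, integrate and invoke $F(0^+)=\hat F(0^+)=0$, and finally reduce the prefactor using \eqref{S2_two_functional_eqns} and \eqref{S2_reflection_formula}. The only cosmetic difference is that the paper establishes $S_2(1+\alpha\rho)S_2(1+\alpha\hat\rho)=1/(2\sin(\pi\rho))$ up front so the prefactor appears as $\tfrac{2}{\alpha\sin(\pi\rho)}$ from the start, whereas you carry $A$ along and verify the identity at the end; you are also slightly more explicit about why the diagonal terms drop out.
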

\begin{proof}
First of all, we use formulas  \eqref{S2_two_functional_eqns} and \eqref{S2_reflection_formula} to check that 
$$
S_2(1+\alpha \rho) S_2(1+\alpha \hat \rho)=\frac{1}{2\sin(\pi \rho)}. 
$$
Next, we use the identity \eqref{p_piialpha_F} and compute
\begin{align*}
&\im \left[ e^{2\pi \i/\alpha} f_{\underline X}((x-z)  e^{\pi \i/\alpha})
f_{\overline X}((y-z) e^{\pi \i/\alpha}) \right] \\ 
&=
\frac{2}{\alpha \sin(\pi \rho)}\im \left[ 
( F(x-z)+e^{\pi \i \hat \rho}  F'(x-z)) (\hat F(y-v)+e^{\pi \i \rho} \hat F'(y-z))\right]\\
&=
\frac{2}{\alpha}\left[ F(x-z) \hat F'(y-z)
+ F'(x-z) \hat F(y-z)\right]=- \frac{2}{\alpha} \frac{\d}{\d z} \left[ F(x-z) \hat F(y-x)\right] .
\end{align*}
To finish the proof one needs to integrate the above expression in $z\in (0,\min(x,y))$ and use the fact that 
$F(0+)=\hat F(0+)=0$ (see Lemma \ref{lemma_3}). 
\end{proof}

\vspace{0.25cm}
\noindent
{\bf Proof of Theorem \ref{thm_main}, part (i):}
Let us denote by $m_t(y)$ the probability density of the random variable $-\underline X_t$ when $X_0=0$ (the existence of this density is known - see \cite{chaumont2013,doney2010,Kuz2011}, though it also follows easily from the absolute convergence of the integral in formula \eqref{eqn_h_t_Laplace} below). 
As we have discussed in Section \ref{subsection_21}, the scaling property of stable processes implies that the probability density
function of $-{\underline X}_{\ee(q)}$ is given by $q^{1/\alpha-1} f_{\underline X}(yq^{1/\alpha})$. Conditioning on the exponential random variable $\ee(q)$ we arrive at the Laplace transform identity  
$$
\int_0^{\infty} e^{-q t} m_t(y) \d t=q^{1/\alpha-1} f_{\underline X}(yq^{1/\alpha}). 
$$
We write down $m_t(y)$ as the inverse Laplace transform 
and then  pass to Hankel's contour (via Lemma \ref{lemma_2} and Corollary \ref{corollary2}; here we need $\alpha > 1$ or $\hat \rho \le 1/2$):
\begin{equation}\label{eqn_h_t_Laplace}
m_t(y)=\frac{1}{2\pi \i} \int_{\i \r}  q^{1/\alpha-1} f_{\underline X}(yq^{1/\alpha})e^{qt} \d q
=\frac{1}{\pi} \int_0^{\infty} e^{-q t} q^{1/\alpha-1} \im\left[ e^{\pi \i /\alpha} f_{\underline X}(y q^{1/\alpha} e^{\pi \i/\alpha}) \right]\d q.
\end{equation}
Next we compute
\begin{align*}
\p_x(T_0>t )&=\p_0( - \underline X_t<x)= \int_0^x m_t(y) \d y\\
&=
\frac{1}{\pi} \int_0^{\infty} e^{-q t} q^{1/\alpha-1}
\left\{
\int_0^x
\im\left[ e^{\pi \i /\alpha} f_{\underline X}(y q^{1/\alpha} e^{\pi \i/\alpha}) \right]\d y \right\}\d q, 
\end{align*}
where the application of Fubini's Theorem is justified due to Corollary \ref{corollary2}. 
The integral in curly brackets can be evaluated using formula \eqref{p_piialpha_F}: 
\begin{align*}
\int_0^x
\im\left[ e^{\pi \i /\alpha} f_{\underline X}(y q^{1/\alpha} e^{\pi \i/\alpha}) \right]\d y=
\frac{2}{\sqrt{\alpha}} S_2(1+\alpha \hat \rho) \sin(\pi \rho) 
\int_0^x
  F'(y q^{1/\alpha}) \d y=
 \frac{S_2(\alpha \hat \rho)}{\sqrt{\alpha}}  q^{-1/\alpha} F(x q^{1/\alpha}), 
\end{align*}
where we have used the fact that $F(0+)=0$ (Lemma \ref{lemma_3}) and identity \eqref{S2_two_functional_eqns}. 
Formula \eqref{eqn_first_exit_time_spectral} follows by combining the above two identities and changing the variable of integration
$q=\lambda^{\alpha}$. 
\qed

\vspace{0.25cm}
\noindent
{\bf Proof of Theorem \ref{thm_main}, part (ii):}
We recall our notation from Section \ref{subsection_21}: the function $h_t(x,y,z)$ denotes the joint probability density of 
random variables $(X_t, \underline X_t)$ for a stable process $X$ started from $X_0=x$. 
We start with equation \eqref{eqn_H_inv_Laplace}, which we reproduce here for convenience: 
\begin{equation}\label{eqn_H_inv_Laplace2}
h_t(x,y,z)=\frac{1}{2\pi \i} \int_{\i \r} q^{2/\alpha-1} f_{\overline X}((y-z)q^{1/\alpha}) f_{\underline X}((x-z)q^{1/\alpha}) e^{qt} \d q. 
\end{equation}
According to Corollary \ref{corollary2}, for fixed $0<z<\min(x,y)$ the integrand is bounded from above by $C \min(1,|q|^{-3})$ when $q\in \i \r$, thus the integral in \eqref{eqn_H_inv_Laplace2} converges absolutely. 

Now we plan to pass to Hankel's contour of integration in \eqref{eqn_H_inv_Laplace2}. Corollary \ref{corollary2} tells us that 
in some sector $\pi/2\le |\arg(q)|<\pi/2+\theta$ the integrand is bounded from above by $C \min(1,|q|^{-3})$. 
In the sector $\pi/2+\theta\le |\arg(q)|\le \pi$ the term $e^{qt}$ is exponentially decaying. When $\alpha>1$, the integrand is bounded by $C e^{2|q|^{1/\alpha}+qt}$, which decays to zero exponentially. When $\alpha\le 1$ and $\rho=1/2$ the integrand is bounded by $C e^{qt}$, which again decays to zero exponentially. Thus all conditions of Lemma \ref{lemma_2} are satisfied and 
we obtain
\begin{equation*}\label{eqn_H_inv_Laplace3}
h_t(x,y,z):=\frac{1}{\pi} \int_{0}^{\infty}  
\im \left[ e^{2\pi \i/\alpha} f_{\overline X}((y-z) e^{\pi \i/\alpha} q^{1/\alpha}) f_{\underline X}((x-z)  e^{\pi \i/\alpha} q^{1/\alpha})
\right]  q^{2/\alpha-1} e^{-qt} \d q. 
\end{equation*}
Assume first that $x\neq y$. We use formula \eqref{eqn_p_t_P_t} and apply Fubini's Theorem to obtain
\begin{align}\label{eqn_thm1_proof3}
p_t(x,y)&=\int_0^{\min(x,y)} h_t(x,y,z)\d z\\
\nonumber
&=
\frac{1}{\pi} \int_{0}^{\infty}  
\left\{ \int_0^{\min(x,y)} \im \left[ e^{2\pi \i/\alpha} f_{\overline X}((y-z) e^{\pi \i/\alpha} q^{1/\alpha}) f_{\underline X}((x-z)  e^{\pi \i/\alpha} q^{1/\alpha})
\right] \d z \right\}  q^{2/\alpha-1} e^{-qt} \d q. 
\end{align}
To justify the application of Fubini's Theorem, we use Corollary \ref{corollary2} and estimate the integrand as follows:
\begin{align}\label{eqn_thm1_proof4}
&|f_{\overline X}((y-z) e^{\pi \i/\alpha} q^{1/\alpha}) f_{\underline X}((x-z)  e^{\pi \i/\alpha} q^{1/\alpha})|
  q^{2/\alpha-1} e^{-qt}\\ \nonumber
  &<C q^{2/\alpha-1} e^{-qt} ((y-z)^{\alpha \rho-1} q^{\hat\rho-1/\alpha}+A(q))
  ((x-z)^{\alpha\hat \rho-1}q^{\rho-1/\alpha}+B(q)),
\end{align}
where $A(q)=B(q)=1$ if $\rho=1/2$ and $\alpha\le 1$ and $A(q)=B(q)=\exp(\max(x,y) |q|^{1/\alpha})$ if $\alpha>1$. Considering these two cases separately ($\alpha\le 1$ and $\alpha>1$) one can check that the function in the right-hand side of \eqref{eqn_thm1_proof4} is integrable in $(z,q) \in \{ 0<z< \min(x,y), \; q>0\} \subset \r^2$ as long as $x\neq y$.

To finish the proof, we compute the integral in curly brackets in 
\eqref{eqn_thm1_proof3} using Corollary \ref{corollary_lucky_integral} and change the variable of integration $q=\lambda^{\alpha}$.
To remove the restriction $x\neq y$ we note that the right-hand side in  \eqref{Q_spectral_formula} defines a continuous function of $t,x,y>0$, and we define $p_t(x,x)$ by continuity. 
\qed

\subsection{Proof of Theorem \ref{theorem_2}}\label{section_proof_thm_2}

Recall that we have defined $\zeta:=\frac{\pi}{2}  \min(1,1/\alpha)$. Let us define ${\mathcal S}_{r}$ to be the ``shift operator": ${\mathcal S}_r u(x)=u(r+x)$, and let $\laplace u(z):=\int_0^{\infty} u(x) e^{-xz} \d x$ denote the Laplace transform of $u$. Note that $u \in \ex{\alpha}$ implies ${\mathcal S}_r u \in \ex{\alpha}$ for all $r\ge 0$.

\begin{lemma}\label{lemma6}
Let $u\in \ex{\alpha}$. Then $\laplace {\mathcal S}_r u(z)$ is an entire function 
and for every small $\epsilon>0$ there exists $C=C(u,\alpha,\epsilon)$ such that for all $r\ge 0$ 
we have $|\laplace  {\mathcal S}_r u(z)| \le C \min(1,|z|^{-1})$ in the sector $|\arg(z)|\le \zeta+\pi/2-\epsilon$.   
\end{lemma}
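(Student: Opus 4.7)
The plan is to extract super-exponential decay from Definition~\ref{def_set_X}(ii) and then rotate the contour of integration in $\laplace {\mathcal S}_r u(z)$. Writing $|x|^{-\delta|x|} = e^{-\delta|x|\ln|x|}$ shows that for every $\epsilon' \in (0,\zeta)$ and every $\delta' > 0$ there exists $M = M(u,\epsilon',\delta') > 0$ with $|u(x)| \le M e^{-\delta'|x|}$ throughout the sector $|\arg(x)| \le \zeta - \epsilon'$, and in fact the decay is faster than any exponential. The entireness of $\laplace {\mathcal S}_r u(z)$ then follows at once by dominated convergence and Morera's theorem.

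For the uniform estimate I may assume $\phi := \arg(z) \ge 0$, by the conjugation symmetry $\overline{\laplace{\mathcal S}_r u(z)} = \laplace{\mathcal S}_r u(\bar z)$ that comes from the reality of $u$ on $(0,\infty)$. For $\phi \in [0, \zeta + \pi/2 - \epsilon]$ I would pick the auxiliary angle $\theta = \theta(\phi) := \min(\phi, \zeta - \epsilon/2)$, so that $0 \le \theta \le \zeta - \epsilon/2$ and $0 \le \phi - \theta \le \pi/2 - \epsilon/2$. The key step is then Cauchy's theorem applied to the contour $[0,R] \cup \{Re^{-\i\psi} : 0 \le \psi \le \theta\} \cup [Re^{-\i\theta}, 0]$; passing $R \to \infty$ yields
\[
\laplace {\mathcal S}_r u(z) = e^{-\i\theta} \int_0^{\infty} u(r + t e^{-\i\theta}) e^{-z t e^{-\i\theta}} \d t.
\]
Once this is established, the estimate is quick: $\cos\theta > 0$ gives $|r + te^{-\i\theta}|^2 = r^2 + t^2 + 2rt\cos\theta \ge t^2$, so $|u(r+te^{-\i\theta})| \le Me^{-\delta' t}$ uniformly in $r \ge 0$; the choice of $\theta$ gives $\re(ze^{-\i\theta}) = |z|\cos(\phi-\theta) \ge |z|\sin(\epsilon/2)$, and combining these yields
\[
|\laplace {\mathcal S}_r u(z)| \le \int_0^{\infty} Me^{-(\delta' + |z|\sin(\epsilon/2))t} \d t = \frac{M}{\delta' + |z|\sin(\epsilon/2)} \le C\min(1, |z|^{-1})
\]
with $C = C(u,\alpha,\epsilon)$ independent of $r$.

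The main obstacle will be justifying the vanishing of the arc integral in the contour deformation. On the arc $s = Re^{-\i\psi}$, $\psi \in [0,\theta]$, the naive bound $|e^{-zs}| \le e^{R|z|}$ can be exponentially large in $R$, so merely exponential decay of $u$ is insufficient. This is precisely where the full super-exponential bound $|u(r+s)| \le e^{-(R/2)\ln(R/2)}$ (valid for $R$ so large that $|r+s| \ge R/2$ on the arc) is needed: it dominates $e^{R|z|}$ for any fixed $z$, so the arc contribution, whose length is at most $R\theta$, tends to $0$ as $R \to \infty$. The remaining check --- analyticity of $u(r+s)$ in $s$ on the closed sector $|\arg(s)| \le \theta$, which holds because adding $r \ge 0$ only brings $\arg(r+s)$ closer to zero --- is routine.
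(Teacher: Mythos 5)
Your proof is correct and follows essentially the same strategy as the paper's: extract super-exponential decay from Definition~\ref{def_set_X}(ii), rotate the contour of integration by an angle bounded away from $\pm\zeta$, and combine the resulting exponential decay in $t$ with the estimate $\re(z e^{-\i\theta}) \gtrsim |z|$ to obtain the $\min(1,|z|^{-1})$ bound. The only cosmetic difference is that you choose the rotation angle $\theta$ as an explicit function of $\arg(z)$, whereas the paper fixes $\beta$ and lets it range over $(-\zeta,\zeta)$; you also spell out the vanishing of the arc contribution in the Cauchy deformation, which the paper leaves implicit.
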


\begin{proof}
The fact that $\laplace {\mathcal S}_r u(z)$ is an entire function follows from Definition \ref{def_set_X}, which implies that $u(x)$ is bounded for small $x=0$ and decays faster than $\exp(-\delta x\ln(x))$ for large $x$. 

Let us fix $\beta \in (-\zeta,\zeta)$. By the same estimate as above, we can rotate the contour of integration and change the variable of integration $x=e^{\i \beta} y$ to obtain
\begin{equation}\label{lem6_eqn1}
\laplace {\mathcal S}_r u(z)=e^{\i \beta} \int_0^{\infty} e^{-y e^{\i \beta} z} u(r +  e^{\i \beta} y) \d y.
\end{equation}
According to Definition \ref{def_set_X}, there exists $\delta(u, \alpha, \epsilon)>0$ and $C(u, \alpha, \epsilon)>0$ 
such that $|u(r + e^{\i \beta} y)|\le C  \min(1, y^{-\delta y})$ for all $y > 0$. Using the estimate $ \re(e^{\i \beta} z) \ge 
 \sin(\epsilon)|z|$ in the sector $|\arg(e^{\i \beta}z)|\le \pi/2-\epsilon$ we obtain (for all $z$ in this sector)
$$
|\laplace {\mathcal S}_r u(z)| \le C \int_0^\infty e^{-\sin(\epsilon) |z|y }\d y \le \frac{C}{\sin(\epsilon) |z|} \, . 
$$
Furthermore,
$$
|\laplace {\mathcal S}_r u(z)| \le C \int_0^\infty y^{-\delta y} \d y . 
$$
Since $\beta$ can be chosen arbitrary in the interval $(-\zeta,\zeta)$, we obtain 
the desired result.
\end{proof}

Next, for $u \in \ex{\alpha}$ we define  
\begin{equation}\label{def_Xi}
\Xi u(\lambda)=\int_0^{\infty} f_{\overline X}(\lambda x) u(x) \d x. 
\end{equation}

\begin{lemma}\label{lemma8} Let $(\alpha,\rho) \in {\mathcal A}$ and $u \in \ex{\alpha}$. 
\begin{itemize}
\item[(i)] There exists $\epsilon>0$ such that for every $r\ge 0$ the function $\lambda \mapsto \Xi {\mathcal S}_r u(\lambda)$ is analytic in the sector $|\arg(\lambda)|<\pi/\alpha+\epsilon$ where it satisfies
\begin{equation}\label{eqn_Xi_u_bound}
|\Xi {\mathcal S}_r u(\lambda)|\le C \times \min(|\lambda|^{\alpha\rho-1},|\lambda|^{-1}), 
\end{equation}  
for some constant $C=C(u,\alpha,\rho,\epsilon)>0$. 
\item[(ii)] The function $\lambda \in (0,\infty) \mapsto \Pi u(\lambda)$ satisfies 
\begin{equation}\label{eqn_Pi_u_bound}
|\Pi u(\lambda)|\le C \times \min(1,\lambda^{-1}), 
\end{equation}  
for some constant $C=C(u,\alpha,\rho)>0$. 
\end{itemize}
\end{lemma}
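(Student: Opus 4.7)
The plan is to prove part (i) via the Laplace representation of $f_{\overline X}$ given in Proposition \ref{thm_Xe1_CM}, combined with a contour rotation that brings the evaluation point of $\laplace \mathcal{S}_r u$ into the sector in which Lemma \ref{lemma6} applies. Substituting $f_{\overline X}(\lambda x) = \int_0^\infty e^{-\lambda x v} \mu(v)\, \d v$ into the definition of $\Xi \mathcal{S}_r u(\lambda)$ and applying Fubini's theorem (justified for $\lambda > 0$ real by the bounds of Lemma \ref{lemma_4}(ii) together with the super-exponential decay of $u \in \ex{\alpha}$), I obtain the representation
\[
\Xi \mathcal{S}_r u(\lambda) = \int_0^\infty \mu(v)\, \laplace \mathcal{S}_r u(\lambda v)\, \d v.
\]
For complex $\lambda = s e^{\i \beta}$ with $|\beta| < \pi/\alpha + \epsilon$ I would extend this identity by rotating the $v$-contour by the angle $\gamma := \mathrm{sgn}(\beta)\, \min(|\beta|, \pi(1/\alpha - \rho) - \delta)$ for some small $\delta > 0$, using Lemma \ref{lemma_1}; no pole of $\mu$ is crossed since $|\gamma| < \pi(1/\alpha - \rho)$, and the required decay at the origin and at infinity is supplied by Lemmas \ref{lemma_4}(ii) and \ref{lemma6}.

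The pivotal geometric observation is that after the rotation one has $|\arg(\lambda e^{-\i\gamma})| \le \pi\rho + \delta + \epsilon$, and the inequality $\pi\rho < \zeta + \pi/2$ holds for every $(\alpha, \rho) \in \mathcal{A}$: when $\alpha \le 1$ this is just $\rho < 1$, while for $\alpha > 1$ the condition becomes $\rho < 1/2 + 1/(2\alpha)$, which is implied by $\rho < 1/\alpha$ together with the elementary inequality $1/\alpha \le 1/2 + 1/(2\alpha)$ valid for $\alpha \ge 1$. Consequently $\arg(\lambda e^{-\i\gamma})$ lies strictly inside the sector of Lemma \ref{lemma6} for $\delta$ and $\epsilon$ small enough, and combining the bound $|\laplace \mathcal{S}_r u(\lambda e^{-\i\gamma} t)| \le C \min(1, (st)^{-1})$ from Lemma \ref{lemma6} with $|\mu(e^{-\i\gamma} t)| \le C \min(t^\alpha, t^{-\alpha\rho})$ from Lemma \ref{lemma_4}(ii) and Lemma \ref{lemma_Mellin_bound} (applied with $a = \alpha$, $b = -\alpha\rho$, $c = 0$, $d = -1$, giving $\xi = 1$ and $\eta = 1 - \alpha\rho$) yields the desired estimate $|\Xi \mathcal{S}_r u(\lambda)| \le C \min(s^{\alpha\rho - 1}, s^{-1})$. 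The analyticity of $\Xi \mathcal{S}_r u(\lambda)$ in the sector $|\arg\lambda| < \pi/\alpha + \epsilon$ is automatic once one observes that $f_{\overline X}$ is analytic in the wider sector $|\arg x| < \pi(1/\alpha + \hat\rho)$ by Lemma \ref{lemma_5}(i), so the original integral defines an analytic function of $\lambda$ there.

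For part (ii), I would split $F = H + c_0 G$ according to \eqref{def_F_eigenfunction}, where $H(x) = e^{x \cos(\pi\rho)} \sin(x \sin(\pi\rho) + \pi\rho(1-\alpha\hat\rho)/2)$ and $c_0 = \sqrt{\alpha}\, S_2(-\alpha\hat\rho)/(4\pi)$, and then bound each piece separately. For the $H$-contribution, the identity $\sin\theta = \im(e^{\i\theta})$ converts the integral into $\sqrt{2/\pi}\, \im\bigl(e^{\i\pi\rho(1-\alpha\hat\rho)/2}\, \laplace u(-\lambda e^{\i\pi\rho})\bigr)$; the point $-\lambda e^{\i\pi\rho}$ has absolute argument $\pi \hat\rho < \zeta + \pi/2$ (the inequality is proved exactly as in part (i) with $\rho$ and $\hat\rho$ exchanged), so Lemma \ref{lemma6} immediately delivers a $\min(1, \lambda^{-1})$ bound. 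For the $G$-contribution, the Laplace representation \eqref{def_g} together with Fubini reduces the integral to $\int_0^\infty g(z)\, \laplace u(z\lambda)\, \d z$ with $g(z) = z^{\alpha\rho/2 - 1/2} |S_2(1+\alpha+\alpha\hat\rho/2 + \i \alpha \ln(z)/(2\pi))|^2$; the asymptotic formula \eqref{S_2_asymptotics} yields $|g(z)| \le C \min(z^\alpha, z^{\alpha\rho - 1 - \alpha})$, and Lemma \ref{lemma_Mellin_bound} applied with $a = \alpha$, $b = \alpha\rho - 1 - \alpha$, $c = 0$, $d = -1$ combined with Lemma \ref{lemma6} produces the bound $C \min(1, \lambda^{-1})$.

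The main technical obstacle is the careful selection of the rotation angle $\gamma$ in part (i): it must simultaneously avoid the poles of $\mu$ at $e^{\pm \i \pi (1/\alpha - \rho)}$ and keep $\lambda v$ inside the sector of validity of Lemma \ref{lemma6}. The verification that the critical inequality $\pi\rho < \zeta + \pi/2$ holds throughout $\mathcal{A}$---which splits into the two cases $\alpha \le 1$ and $\alpha > 1$---is the decisive step that makes the uniform bound go through.
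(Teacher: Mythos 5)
Your approach to part (i) is genuinely different from the paper's. The paper expresses $\Xi u(\lambda)$ via Plancherel's theorem as a contour integral $\frac{1}{2\pi\i\lambda}\int_{\i\r}\phi(z/\lambda)\laplace u(-z)\,\d z$, where $\phi$ is the Wiener--Hopf factor (the Laplace transform of $f_{\overline X}$), then deforms the vertical line to a wedge $(e^{-\i\beta}\infty,0)\cup(0,e^{\i\beta}\infty)$ with $\beta=\pi/2-\zeta+\epsilon$; the crucial advantage of this representation is that $\phi$ is analytic and pole-free in the sector $|\arg z|<\pi(1/\alpha+\hat\rho)$, so the whole deformation can be done in a single step for $\lambda>0$, and analytic continuation in $\lambda$ is then immediate. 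You instead unwind one more transform and work directly with the Stieltjes density $\mu$ of the mixture representation from Proposition~\ref{thm_Xe1_CM}, which forces you to steer around the poles $u_\pm=e^{\pm\i\pi(1/\alpha-\rho)}$ of $\mu$. The key geometric inequality $\pi\rho<\zeta+\pi/2$ and the application of Lemma~\ref{lemma_Mellin_bound} are correct, and your part (ii) (decomposing $F=H+c_0G$ and treating the exponential term via the imaginary part of $\laplace u$ evaluated at $-\lambda e^{\i\pi\rho}$, the integral term via Fubini and the asymptotics of the $S_2$-kernel) is a clean alternative to the paper's repetition of the Plancherel computation with $f=\laplace F$ in place of $\phi$.

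There is, however, a gap in the contour rotation for part (i). You propose, for $\lambda=se^{\i\beta}$, to rotate the $v$-contour in $\int_0^\infty\mu(v)\laplace\mathcal{S}_ru(\lambda v)\,\d v$ by the angle $\gamma=\mathrm{sgn}(\beta)\min(|\beta|,\pi(1/\alpha-\rho)-\delta)$ in one application of Lemma~\ref{lemma_1}. But Lemma~\ref{lemma_1} demands that the integrand decay uniformly on the entire swept sector, and on that sector $\arg(\lambda v)$ ranges over an interval containing $\beta$ itself; when $\alpha<1$ the target sector $|\beta|<\pi/\alpha+\epsilon$ already exceeds $\zeta+\pi/2=\pi$, so Lemma~\ref{lemma6} provides no bound on $\laplace\mathcal{S}_ru(\lambda v)$ there and the integral on the unrotated ray $v>0$ does not even converge. (Indeed $\laplace\mathcal{S}_ru$ has super-exponential growth in the left half-plane.) The argument must instead proceed by a chain of overlapping rotations: for each fixed $\gamma$ with $|\gamma|<\pi(1/\alpha-\rho)-\delta$, the rotated integral $I_\gamma(\lambda)$ converges and is analytic precisely when $|\arg(\lambda e^{-\i\gamma})|<\zeta+\pi/2-\epsilon'$; consecutive $I_{\gamma_1}$, $I_{\gamma_2}$ with $|\gamma_1-\gamma_2|$ small can be compared by a small rotation wholly inside the good sector, and gluing these together along $\arg\lambda$ and appealing to your separate observation that $\Xi\mathcal{S}_ru$ is directly analytic (via Lemma~\ref{lemma_5}(i)) completes the identification. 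Without spelling out this patching, the argument as written fails for $\alpha$ sufficiently small (roughly $\alpha\le 1/(1+\rho)$). The paper's Plancherel route sidesteps the whole issue since $\phi$ has no poles to dodge.
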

\begin{proof}
First we will prove part (i) in the special case $r=0$ (so that ${\mathcal S}_r u=u$).  Recall that the Wiener-Hopf factor $\phi(z)$ (given by \eqref{eq_phi_S2}) is the Laplace transform of $f_{\overline X}(x)$
and that $\phi(z)$ can be extended to a meromorphic function on ${\mathcal R}$ (the Riemann surface of the logarithm function). 
Moreover, according to \eqref{eq_phi_S2} and properties (i) and (ii) of the double sine function 
on page \pageref{properties_S2_function}, we see that the function $\phi(z)$ is analytic in the sector 
$|\arg(z)|<\pi (1/\alpha+\hat \rho)$ and satisfies 
$|\phi(z)| \le  C \min(1,|z|^{-\alpha \rho})$
in any smaller sector $|\arg(z)|<\pi(1/\alpha+\hat\rho)-\epsilon$ for some $C=C(\alpha,\rho,\epsilon)$. 

Let us take $u \in \ex{\alpha}$. 
{Observe that $\Xi u(\lambda)$ is the value of the convolution of $f_{\overline{X}}(\lambda x)$ and $u(-x)$ at $x = 0$ (we extend these functions to $\r$ by setting $f_{\overline{X}}(\lambda x) = u(x) = 0$ when $x < 0$). Both $f_{\overline{X}}(\lambda x)$ and $u(-x)$ are integrable, so the Fourier transform of the convolution of these two functions is the product of the corresponding Fourier transforms: $\lambda^{-1} \phi(-\i z/\lambda)$ and $\laplace u(\i z)$. The estimates discussed above imply that the function $\lambda^{-1} \phi(-\i z/\lambda) \laplace u(\i z)$ is integrable over $z \in \r$, and thus $\Xi u(\lambda)$ can be evaluated as the inverse Fourier transform of $\lambda^{-1} \phi(-\i z/\lambda) \laplace u(\i z)$ at zero. This allows us to write}
\begin{equation*}
\Xi u(\lambda)={}{\frac{1}{2\pi \lambda} \int_{\r} \phi(\i z/\lambda) \laplace u(-\i z) \d z={}}\frac{1}{2\pi \i \lambda} \int_{\i \r} \phi(z/\lambda) \laplace u(-z) \d z. 
\end{equation*}
If $\alpha\le 1$ we choose any $\epsilon \in (0,\pi \hat \rho)$, and if $\alpha>1$ we choose any
$\epsilon \in (0,\frac{\pi}{2} (1-1/\alpha))$.  
Let us define $\beta=\pi/2-\zeta+\epsilon$ (where $\zeta=\frac{\pi}{2}\min(1,1/\alpha)$). 
Applying Lemma \ref{lemma_1}, the upper bound for $\phi$ and the properties of $\laplace u(-z)$ described in Lemma \ref{lemma6}
we can deform the contour of integration in the above equation $\i \r \mapsto \contour$, where 
$\contour=(e^{-\i \beta}\infty,0) \cup (0,e^{\i \beta} \infty)$, and obtain
\begin{equation*}
\Xi u(\lambda)=\frac{1}{2\pi \i \lambda} \int_{\contour} \phi(z/\lambda) \laplace u(-z) \d z.
\end{equation*}
Rewriting this expression as two integrals and changing the variable of integration we arrive at
\begin{align}\label{lemma8_eqn1}
\Xi u(\lambda)&=I_1(\lambda)+I_2(\lambda)\\ 
\nonumber
&=
\frac{e^{\i \beta}}{2\pi \i \lambda} \int_{0}^{\infty} \phi(w e^{\i \beta}/\lambda) \laplace u(-w e^{\i \beta}) \d w
-\frac{e^{-\i \beta}}{2\pi \i \lambda} \int_{0}^{\infty} \phi(w e^{-\i \beta}/\lambda) \laplace u(-w e^{-\i \beta}) \d w.
\end{align}
Since $\phi(z)$ is analytic in the sector $|\arg(z)|<\pi(1/\alpha+\hat \rho)$, the functions $I_1(\lambda)$ and $I_2(\lambda)$ 
are analytic in the sector $|\arg(\lambda)|<\pi (1/\alpha+\hat \rho)-\beta$. Considering separately the two cases $\alpha\le 1$ and $\alpha>1$ we check that $\xi:=\pi \hat \rho-\beta>0$, thus the function $\Xi u(\lambda)$ is analytic in the sector
$|\arg(\lambda)|<\pi/\alpha+\xi$. 

According to Lemma \ref{lemma6}, we can estimate $|\laplace u( -w e^{\pm \i \beta})|\le C \min(1,|w|^{-1})$. 
This result and the bound $|\phi(w e^{\pm \i \beta})| \le C \min(1,|w|^{-\alpha \rho})$ combined with 
Lemma \ref{lemma_Mellin_bound} give us the upper bound \eqref{eqn_Xi_u_bound}. 

The proof of the case $r>0$ follows exactly the same steps, since the function $v_r(x)={\mathcal S}_r u(x)$ also belongs to $\ex{\alpha}$. Lemma~\ref{lemma6} then tells us that the upper bound in \eqref{eqn_Xi_u_bound} is uniform in $r$.  

The proof of part (ii) follows very similar steps. We choose $\epsilon$ and $\beta$ in the same way as above. Let 
$f(z):=\sqrt{2/\pi} \int_0^{\infty}  F(x) e^{-zx} dx$. As we have established in 
formula \eqref{eqn_F_Laplace}, the function $f(z)$ can be analytically continued to a meromorphic function on ${\mathcal R}$. Moreover, $f(z)$ is analytic in the sector $|\arg(z)|<\pi \hat \rho$ and in any smaller sector 
$|\arg(z)|<\pi \hat \rho-\epsilon_2$ it satisfies an upper bound
$|f(z)|\le C \min(1,|z|^{-1-\alpha \rho})$, which follows from  \eqref{S_2_asymptotics} and \eqref{eqn_F_Laplace}.

Note that $e^{- x} F(x)$ and $e^{\lambda x} u(x)$ are in $L^2(\r^+)$. We express $\Pi u$ using Plancherel's theorem for Laplace transform as follows
\begin{align*}
  \Pi u(\lambda) & = \sqrt{2/\pi}\int_0^\infty (e^{-\lambda x} F(\lambda x)) (e^{\lambda x} u(x)) dx = \frac{1}{2 \pi i \lambda} \int_{\lambda + i \r} f(\tfrac{z}{\lambda}) \laplace u(-z) dz .
\end{align*}
The contour of integration is deformed in the same way as in \eqref{lemma8_eqn1} and we obtain
\begin{equation*}
  \Pi u(\lambda)  = \frac{e^{\i \beta}}{2\pi \i \lambda} \int_{0}^{\infty}   f(w e^{\i \beta}/\lambda) \laplace u(-w e^{\i \beta}) \d w
-\frac{e^{-\i \beta}}{2\pi \i \lambda} \int_{0}^{\infty}   f(w e^{-\i \beta}/\lambda) \laplace u(-w e^{-\i \beta}) \d w.
\end{equation*}
Applying Lemma \ref{lemma_Mellin_bound} and the above mentioned estimate 
$|  f(w e^{-\i \beta})|\le C \min(1,|w|^{-1-\alpha \rho})$ we get the desired result 
\eqref{eqn_Pi_u_bound}.
\end{proof}

\begin{lemma}\label{lemma9}
Let $\alpha$ and $\rho$ be as in Theorem~\ref{theorem_2}. Then $\Pi: L^2(\r^+) \mapsto L^2(\r^+)$ is a bounded self-adjoint operator and $ \hat \Pi: \ex{\alpha} \mapsto  L^2(\r^+)$ is a symmetric operator such that $ \hat \Pi \ex{\alpha}$ is dense in $L^2(\r^+)$. 
\end{lemma}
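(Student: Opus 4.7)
My plan is to prove the three claims in turn: $L^2$-boundedness and self-adjointness of $\Pi$, well-definedness and symmetry of $\hat\Pi$ on $\ex{\alpha}$, and density of $\hat\Pi\ex{\alpha}$ in $L^2(\r^+)$. Throughout I exploit the fact that for $\rho\ge 1/2$ the function $F$ is bounded on $(0,\infty)$, whereas $\hat F$ may grow exponentially.

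For the boundedness of $\Pi$, the natural tool is the Mellin-Plancherel theorem. For $u$ in a dense subspace (for instance $u\in\ex{\alpha}$), Fubini and a change of variable formally give
\[
\mellin(\Pi u)(s)=\sqrt{2/\pi}\,\mellin F(s)\,\mellin u(1-s).
\]
By Theorem~\ref{thm_F_Laplace_Mellin}(ii), the quantity $\mellin F(z)=\Gamma(z)S_2(z)/(2S_2(\alpha\hat\rho+z))$ admits analytic continuation through the strip $0<\re(z)<1+\alpha\rho$: indeed $S_2$ is analytic and zero-free on $0<\re(z)<1+\alpha$ and $\Gamma$ has no poles in $\re(z)>0$. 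Combining Stirling's formula with the asymptotic \eqref{S_2_asymptotics}, the exponential factors along the critical line $\re(z)=1/2$ cancel precisely when $\hat\rho\le 1/2$, i.e.\ when $\rho\ge 1/2$, so that $\sup_{t\in\r}|\mellin F(1/2+it)|<\infty$. Mellin-Plancherel then gives $\|\Pi u\|_2\le C\|u\|_2$ on the dense subspace, and $\Pi$ extends boundedly to $L^2(\r^+)$. Self-adjointness follows from Fubini on the real, symmetric kernel $F(\lambda x)$ applied to $u,v\in L^1\cap L^2$, and by density.

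Next I turn to $\hat\Pi$. The integral defining $\hat\Pi u(\lambda)$ converges for each $\lambda>0$ and $u\in\ex{\alpha}$ because $u$ decays faster than $\exp(-\delta|x|\ln|x|)$ in its sector of analyticity while $\hat F$ grows at most exponentially. To place $\hat\Pi u$ in $L^2(\r^+)$, I would mimic the proof of Lemma~\ref{lemma8}: recast $\hat F$ via \eqref{p_piialpha_F} (with $\rho\leftrightarrow\hat\rho$) in terms of the Wiener-Hopf density $f_{\overline X}$ along a rotated ray, then rotate the contour in the $x$-integral into the sector of analyticity of $u$, where the exponential growth of $\hat F(\lambda x)$ is dominated by the super-exponential decay of $u$ and by the decay of $f_{\overline X}$ from Corollary~\ref{corollary2}. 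This yields an estimate $|\hat\Pi u(\lambda)|\le C_u\min(1,\lambda^{-1})$, so $\hat\Pi u\in L^2(\r^+)$. Symmetry, $\langle\hat\Pi u,v\rangle=\langle u,\hat\Pi v\rangle$ for $u,v\in\ex{\alpha}$, is then Fubini, justified by the same estimates.

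For density of $\hat\Pi\ex{\alpha}$ in $L^2(\r^+)$, I argue by duality. Suppose $w\in L^2(\r^+)$ satisfies $\langle w,\hat\Pi u\rangle=0$ for every $u\in\ex{\alpha}$. Fubini (justified by the bounds above together with $w\in L^2$) reduces this to $\int_0^\infty u(x)W(x)\,dx=0$ for all $u\in\ex{\alpha}$, where $W(x):=\int_0^\infty \hat F(\lambda x) w(\lambda)\,d\lambda$. Density of $\ex{\alpha}|_{\r^+}$ in $L^2(\r^+)$ (property (v) before Definition~\ref{def_set_X}) forces $W\equiv 0$. Taking the Laplace transform of $W$ in $x$ for $z>\max(0,\cos(\pi\hat\rho))$, Fubini and the explicit Laplace transform of $\hat F$ from \eqref{eqn_F_Laplace} (with $\rho\mapsto\hat\rho$) turn this into the vanishing of an explicit Stieltjes-type transform of $w$ against a nonvanishing kernel, from which $w=0$ follows by uniqueness of the Stieltjes/Laplace transform.

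The main obstacle is the first step: since $F$ is bounded but not in $L^2$ (and in the critical case $\rho=1/2$ even merely oscillatory at infinity), the integral $\mellin F(1/2+it)$ does not converge absolutely. I would handle this by regularization, e.g.\ $F_\epsilon(x)=F(x)e^{-\epsilon x}$, for which $\mellin F_\epsilon$ is given by its defining integral on $\re(z)>0$; verifying that $\mellin F_\epsilon(1/2+it)\to \mellin F(1/2+it)$ (the analytic continuation provided by Theorem~\ref{thm_F_Laplace_Mellin}(ii)) with uniform bounds in $\epsilon$ is the technical heart of the argument, and this is exactly where the hypothesis $\rho\ge 1/2$ is used decisively.
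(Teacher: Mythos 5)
Your treatment of the boundedness and self-adjointness of $\Pi$ matches the paper's: both pass to the Mellin transform, recognize $\Pi$ as a Mellin convolution, and use the asymptotics of $\mellin F$ along the critical line together with $\rho\ge 1/2$ to obtain a bounded multiplier. The issue you flag about absolute convergence of $\int_0^\infty F(x)\,x^{-1/2+\i t}\,\d x$ is real (since $F$ is only bounded, this integral is not absolutely convergent at the critical line, and $\mellin F$ there is the analytic continuation from Theorem~\ref{thm_F_Laplace_Mellin}(ii)); the paper brushes this away as ``a standard computation,'' and your regularization $F_\epsilon(x)=F(x)e^{-\epsilon x}$ is a legitimate way to make it rigorous. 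The mapping property of $\hat\Pi$ into $L^2(\r^+)$ also proceeds exactly as in the paper, by invoking the $\hat\rho$-analogue of Lemma~\ref{lemma8}(ii).

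The density argument, however, contains a genuine gap. You fix $w\in L^2(\r^+)$ orthogonal to $\hat\Pi\ex{\alpha}$ and ``apply Fubini'' to rewrite $0=\langle w,\hat\Pi u\rangle$ as $\int_0^\infty u(x)W(x)\,\d x$ with $W(x)=\int_0^\infty\hat F(\lambda x)\,w(\lambda)\,\d\lambda$. This exchange is not justified, and in fact $W$ is typically not even defined: when $\rho>1/2$ we have $\hat\rho<1/2$ so $\cos(\pi\hat\rho)>0$, and $\hat F(y)$ grows like $e^{y\cos(\pi\hat\rho)}$. A generic $w\in L^2(\r^+)$ has no pointwise decay, so $\int_0^\infty\hat F(\lambda x)\,w(\lambda)\,\d\lambda$ diverges. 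The bound $|\hat\Pi u(\lambda)|\le C\min(1,\lambda^{-1})$ comes from contour rotation and cancellation, not from absolute integrability of the double integral, so it does not rescue Tonelli here. Your entire subsequent chain (from $W\equiv 0$ to Laplace/Stieltjes uniqueness for $w$) is therefore built on an object that does not exist.

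The paper circumvents precisely this obstacle by keeping the exponentially growing $\hat F$ locked inside the already-tamed quantity $v=\hat\Pi u$. It fixes one nonzero $u\in\ex{\alpha}$, sets $v=\hat\Pi u\in L^2(\r^+)$, and exploits the scaling covariance $\hat\Pi u_a(\lambda)=a^{-1}v(a^{-1}\lambda)$ for $u_a(x)=u(ax)\in\ex{\alpha}$. The orthogonality condition then becomes $\int_0^\infty w(x)\,v(a^{-1}x)\,\d x=0$ for all $a>0$, which by Mellin--Plancherel is the statement that the Fourier transform of $\mellin w(-z)\,\mellin v(z)\in L^1(\r)$ vanishes identically. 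Hence $\mellin w(-z)\,\mellin v(z)=0$ a.e., and since the decay bound on $v$ makes $\mellin v$ analytic (and so a.e.\ nonzero) on the strip $|\im z|<1/2$, one concludes $\mellin w\equiv 0$ and $w=0$. If you want to fix your write-up, replace the $W$-construction with this dilation argument; the rest of your reasoning then goes through.
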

\begin{proof}
The fact that both operators $\Pi$ and $\hat \Pi$ are symmetric follows easily from their definition \eqref{def_operators}. Let us denote by $\mellin$ the Mellin transform operator:
\begin{equation}
\mellin u(z)=\frac{1}{\sqrt{2\pi}}\int_0^{\infty} u(x) x^{-1/2+\i z} \d x, \;\;\; z\in \r. 
\end{equation} 
It is well-known that the Mellin transform operator is an isometry between $L^2(\r^+)$ and $L^2(\r)$.
Since the operator $\Pi$ is a Mellin convolution, Theorem \ref{thm_F_Laplace_Mellin}(ii) and a standard computation leads to identity
$$
\mellin \Pi u(z)=\mellin  F(z) \times \mellin u (-z).  
$$
Formula \eqref{eqn_Mellin_F} and location of poles/zeros of the double sine function imply that $\mellin  F(z)$ has no poles on the real line, and the asymptotic relation \eqref{S_2_asymptotics} tells us that 
$|\mellin  F(z)| \exp((\rho-1/2)\pi |z|) \to 1$ as $z\to \infty$. Thus for $\rho\ge 1/2$ the function $\mellin  F(z)$ is bounded and it induces a bounded multiplication operator on $L^2(\r)$, thus the operator 
$\Pi u=\mellin^{-1} [ \mellin F \times \mellin^{-1}u]$ is a bounded (self-adjoint) operator on $L^2(\r^+)$.  

The fact that $ \hat\Pi$ maps $\ex{\alpha}$ into $L^2(\r^+)$ follows from Lemma \ref{lemma8}(ii). Let us prove that the image 
$ \hat\Pi \ex{\alpha}$ is dense in $L^2(\r^+)$. Assume to the contrary that $ \hat\Pi \ex{\alpha}$ is not dense in $L^2(\r^+)$. Then there must exist a nonzero function $w\in L^2(\r^+)$ such that $\int_0^{\infty} w(x)  \hat\Pi u(x) \d x=0$ for all $u\in \ex{\alpha}$. Consider any nonzero function $u \in \ex{\alpha}$ and define 
$u_a(x)=u(ax)$ for $a>0$. As we have mentioned in the paragraph following Definition \ref{def_set_X}, we have $u_a \in \ex{\alpha}$ for all $a>0$. Let $v(\lambda)= \hat\Pi u(\lambda)$. Then it is clear from the definition of $\hat\Pi$ that $\hat\Pi u_a(\lambda)=a^{-1} v(a^{-1} \lambda)$, so that all functions $v(a^{-1} x)$ belong to $ \hat\Pi \ex{\alpha}$ 
and we have $\int_0^{\infty} w(x) v(a^{-1} x) \d x=0$ for all $a>0$. 
Applying Plancherel's Theorem for Mellin transform we see that 
$$
0=\int_0^{\infty} w(x) v(a^{-1} x) \d x=
\int_{\r} \mellin w(-z) \mellin v(z) a^{1/2+\i z} \d z,  \;\;
{\textnormal{ for all }} \; a>0.  
$$
Thus the Fourier transform of a function $\mellin w(-z) \mellin v(z) \in L^1(\r)$ is identically zero, therefore 
\begin{equation}\label{lemma9_eqn1}
\mellin w(-z) \mellin v(z)=0, \; \; {\textnormal{ for all }} \; z \in \r.
\end{equation}
 Next, due to the fact that $v=\hat \Pi u$ and the estimate 
\eqref{eqn_Pi_u_bound} we see that $\mellin v(z)$ is analytic in a strip $|\im(z)|<1/2$, thus $\mellin v(z)$ is nonzero almost everywhere in this strip. Equation \eqref{lemma9_eqn1} then implies that $\mellin w(-z)=0$ for almost all $z\in \r$, therefore $w \equiv 0$ and we arrive at a contradiction. {(This argument is essentially an application of the $L^2$ version of Wiener's Tauberian theorem for the Mellin transform.)}
\end{proof}

\vspace{0.25cm}
\noindent
{\bf Proof of Theorem \ref{theorem_2}:}
Given the result in Lemma \ref{lemma9}, it remains to establish formulas \eqref{eqn_Pi_hat_Pi}, 
 \eqref{spectral_formula_2} and \eqref{spectral_formula_2_dual}. Our plan is to prove \eqref{spectral_formula_2} 
 using similar method as in the proof of Theorem \ref{thm_main}(ii) and then to derive the remaining statements \eqref{eqn_Pi_hat_Pi} and \eqref{spectral_formula_2_dual} as corollaries of \eqref{spectral_formula_2}. 

When $\rho=1/2$ formula \eqref{spectral_formula_2}  follows at once from \eqref{Q_spectral_formula} and Fubini's Theorem (whose application is justified since the functions $F$ and $\hat F$ are bounded in this case). Thus we will only consider the case when $\rho>1/2$. 

We recall that the operator $\Xi$ is defined by \eqref{def_Xi} and ${\mathcal S}_r$ is the ``shift" operator from 
Lemma~\ref{lemma6}. Let us fix $u \in \ex{\alpha}$. We begin by rewriting formula \eqref{eqn_p_t_P_t}: 
\begin{equation}\label{eqn_Pt_kt}
P_t u(x)=\int_0^{x} k_t(x,z) \d z,
\end{equation} 
where we have defined 
$$
k_t(x,z):=\int_z^{\infty} h_t(x,y,z) u(y) \d y. 
$$
The Laplace transform of $t\mapsto k_t(x,z)$ can be computed by applying Fubini's Theorem to the above formula 
and using \eqref{eqn_H_q_factorization},  \eqref{eqn_Ht_Hq} and \eqref{def_Xi}: 
\begin{align}
\int_0^{\infty} e^{-qt} k_t(x,z)\d t=
\int_z^{\infty} q^{-1} H_q(x,y,z) u(y) \d y=
q^{2/\alpha-1} f_{\underline X}((x-z)q^{1/\alpha}) \Xi {\mathcal S}_z u(q^{1/\alpha})=:K_q(x,z). 
\end{align}

Let us denote $w=(x-z)|q|^{1/\alpha}$. According to Corollary \ref{corollary2} and Lemma \ref{lemma8}, the function 
$q\mapsto  K_q(x,z)$ is analytic in the sector $|\arg(q)|<\pi+\epsilon$ and 
it is bounded in the sector $|\arg(q)|\le \pi$  by
\begin{equation}\label{eqn_Kq_bound1}
|K_q(x,z)|\le  C(u,\alpha,\rho) \times |q|^{2/\alpha-1} \min(w^{\alpha \hat \rho-1},w^{-\alpha-1}) \times \min(|q|^{\rho-1/\alpha},
|q|^{-1/\alpha}). 
\end{equation}
For fixed $z$ and $x$ the upper bound \eqref{eqn_Kq_bound1} implies the estimate: 
\begin{equation}\label{eqn_Kq_bound2}
|K_q(x,z)|\le C(u,\alpha,\rho,x,z)\times \min(1,|q|^{-2}), \;\;\; |\arg(q)|\le \pi. 
\end{equation}
At the same time, using the trivial bounds $\min(w^{\alpha \hat \rho-1},w^{-\alpha-1}) \le w^{\alpha \hat \rho-1}$
and $\min(|q|^{\rho-1/\alpha},|q|^{-1/\alpha})\le q^{\rho-1/\alpha}$ we obtain 
from \eqref{eqn_Kq_bound1} the following result: 
\begin{equation}\label{eqn_Kq_bound3}
|K_q(x,z)|\le C(u,\alpha,\rho)\times |x-z|^{\alpha \hat \rho-1}, \;\;\; |\arg(q)|=\pi. 
\end{equation}

Next, we fix $x,z$ satisfying $0<z<x$, use the upper bound 
\eqref{eqn_Kq_bound2} and express $k_t(x,z)$ as the inverse Laplace transform 
$$
k_t(x,z)=\frac{1}{2\pi \i} \int_{\i \r} K_q(x,z) e^{qt} \d q. 
$$
Next, we use the same upper bound \eqref{eqn_Kq_bound2} and Lemma \ref{lemma_2} and pass to Hankel's contour of integration in the above integral:
$$
k_t(x,z)=-\frac{1}{\pi} \int_0^{\infty} \im[ K_{qe^{\pi \i}} (x,z)] e^{-qt} \d q. 
$$
Combining this result with formula \eqref{eqn_Pt_kt} and applying Fubini's Theorem 
we obtain 
\begin{equation}\label{thm2_proof1}
P_t u(x)=-\frac{1}{\pi} \int_0^{\infty} 
\left[ \int_0^{x}  \im[ K_{qe^{\pi \i}} (x,z)] \d z \right] e^{-qt} \d q. 
\end{equation} 
The use of Fubini's Theorem in the previous step is justified since the function 
$(z,q) \mapsto K_{qe^{\pi \i}} (x,z) e^{-qt}$ is absolutely integrable on $(0,x) \times (0,\infty)$ -- this follows easily from
the upper bound \eqref{eqn_Kq_bound3}.

Next, we rewrite the integral in  square brackets in \eqref{thm2_proof1} as follows
\begin{align}\label{thm2_proof2}
& \int_0^{x}  \im[ K_{qe^{\pi \i}} (x,z)] \d z \\ \nonumber
&\qquad =
 -q^{2/\alpha-1} \int_0^{x}  \int_z^{\infty} 
 \im \left[e^{2\pi \i/\alpha} f_{\underline X}((x-z)q^{1/\alpha} e^{\pi \i/\alpha})
 f_{\overline X}((y-z)q^{1/\alpha} e^{\pi \i/\alpha}) u(y) \right] \d y \d z \\ \nonumber
 &\qquad =
 -q^{2/\alpha-1}   \int_0^{\infty} \int_0^{\min(x,y)}
 \im \left[e^{2\pi \i/\alpha} f_{\underline X}((x-z)q^{1/\alpha} e^{\pi \i/\alpha})
 f_{\overline X}((y-z)q^{1/\alpha} e^{\pi \i/\alpha})  \right] \d z u(y) \d y
 \\ \nonumber
 &\qquad = - \frac{2}{\alpha} q^{1/\alpha-1}  \int_0^{\infty}  F(xq^{1/\alpha}) \hat F(yq^{1/\alpha}) u(y) \d y
 =- \frac{2}{\alpha}  q^{1/\alpha-1} F(x q^{1/\alpha}) \hat \Pi u(q^{1/\alpha}). 
\end{align} 
In the second step we have again applied Fubini's Theorem: to justify its use note that according to 
Corollary \ref{corollary2} we have
$$
|f_{\underline X}((x-z)q^{1/\alpha} e^{\pi \i/\alpha})
 f_{\overline X}((y-z)q^{1/\alpha} e^{\pi \i/\alpha}) u(y)|\le
 C \times |x-z|^{\alpha \hat \rho-1} \times |y-z|^{\alpha \rho-1} e^{y} \times |u(y)|
$$
for some constant $C=C(\alpha,\rho,q)$, and the function in the right-hand side of the above equation 
is integrable over the region $\{(z,y) \in \r^2 : 0<z<\min(x,y)\}$. In the third step of \eqref{thm2_proof2} we have applied 
Corollary \ref{corollary_lucky_integral}. Combining formulas \eqref{thm2_proof1} and \eqref{thm2_proof2} and 
changing the variable of integration $q=\lambda^{\alpha}$ we obtain the desired result 
\eqref{spectral_formula_2}.

Formula \eqref{eqn_Pi_hat_Pi} follows from \eqref{spectral_formula_2} by taking the limit $t\to 0^+$: Then $P_tu(x) \to u(x)$ for all $x>0$ (since $u$ is continuous and a L\'evy process $X$ killed on the first exit from $(0,\infty)$ is continuous in probability), while the right-hand side of \eqref{spectral_formula_2} converges to $\Pi \hat \Pi u$. 

Let us now prove formula \eqref{spectral_formula_2_dual}. We denote by $(u,v)$ the inner product in $L^2(\r^+)$. We start with $v,w \in \ex{\alpha}$ and use 
formulas \eqref{eqn_Pi_hat_Pi} and \eqref{spectral_formula_2} and the fact that operators $\Pi$ and $\hat \Pi$ are symmetric to compute
\begin{equation}\label{L2_computation1}
(\hat P_t \hat \Pi w, v)=(\hat \Pi w, P_t v)=(\hat \Pi w, \Pi e^{-\lambda^{\alpha}t} \hat \Pi v)
=(\Pi \hat \Pi w, e^{-\lambda^{\alpha}t} \hat \Pi v)
=( w, e^{-\lambda^{\alpha}t} \hat \Pi v)=(\hat \Pi e^{-t \lambda^{\alpha}} w, v). 
\end{equation}
In the last step we have used the fact that $e^{-t \lambda^{\alpha}} w \in \ex{\alpha}$. Since 
\eqref{L2_computation1} is valid for all $v \in \ex{\alpha}$ and $\ex{\alpha}$ is dense in $L^2(\r^+)$ we conclude that $\hat P_t \hat \Pi w=\hat \Pi e^{-t \lambda^{\alpha}} w$. Therefore, $\Pi \hat P_t \hat \Pi w=\Pi \hat \Pi e^{-t \lambda^{\alpha}} w = e^{-t \lambda^{\alpha}} w$, as desired.
\qed

\vspace{0.25cm}
\noindent
{\bf Proof of Corollary \ref{corollary1}:} Part (ii) of Corollary \ref{corollary1} was established above (see the discussion following \eqref{L2_computation1}). To prove part (i), we start with $u \in \ex{\alpha}$, $v \in L^2(\r^+)$ and denote 
$\hat \Pi u=w$. According to \eqref{eqn_Pi_hat_Pi} we have $\Pi w=u$. 
We use formula \eqref{spectral_formula_2_dual} and compute 
\begin{equation}\label{corrollary1_proof1}
(P_t \Pi v, w)=(P_t \Pi v, \hat \Pi u)=(v, \Pi \hat P_t \hat \Pi u)=
(v,e^{-t \lambda^{\alpha}}u)=
(e^{-t \lambda^{\alpha}}v, \Pi w)=(\Pi e^{-t \lambda^{\alpha}}v,  w). 
\end{equation}
Since $w$ ranges over $\hat \Pi \ex{\alpha}$, and the latter is a dense subset of $L^2(\r^+)$ due to 
Theorem \ref{theorem_2}(i), we see that \eqref{corrollary1_proof1} implies that for all $v \in L^2(\r^+)$ 
we have $P_t \Pi v=\Pi e^{-t \lambda^{\alpha}}v$. Application of Fubini's Theorem (note that $u_\lambda(x) = F(\lambda x)$ is bounded when $\rho \ge 1/2$) gives
\[
 P_t \Pi v(x) = \int_0^\infty \int_0^\infty p_t(x, y) F(\lambda y) v(\lambda) \d \lambda \d y = \int_0^\infty P_t u_\lambda(x) v(\lambda) \d \lambda .
\]
Since $v \in L^2(\r^+)$ is arbitrary, we have $P_t u_\lambda(x) = e^{-t \lambda^\alpha} u_\lambda(x)$ for almost all $x, \lambda > 0$. By continuity, this relation holds for all $x, \lambda > 0$, as desired.
\qed

\section{Doney classes}\label{Section_Doney_classes}

Doney \cite{Doney1987} has introduced classes of stable processes for which the Wiener-Hopf factors can be computed explicitly in terms 
of q-Pochhammer symbols. Let $k,l\in {\mathbb Z}$ and let us define by ${\mathcal C}_{k,l}$ the class of stable processes with parameters $(\alpha,\rho)$ satisfying
\begin{equation}\label{def_Doney_class}
\alpha\rho=l-k\alpha.
\end{equation}
These classes include the spectrally-positive $({\mathcal C}_{0,1})$ and the spectrally-negative processes $({\mathcal C}_{-1,-1})$. 
Note that $X \in {\mathcal C}_{k,l}$ if and only if $\hat X \in {\mathcal C}_{-k-1,-l}$. 
Let us define the q-Pochhammer symbol 
\begin{align*}
(a;q)_n=
\begin{cases}
\prod\limits_{j=0}^{n-1} (1-aq^j), \;\;\; n>0,\\
\prod\limits_{j=1}^{|n|} (1-aq^{-j})^{-1}, \;\;\; n<0,
\end{cases}
\end{align*}
and $(a;q)_0=1$. 

The main result of Doney \cite{Doney1987} is the following formula for the Wiener-Hopf factor: for a process $X\in {\mathcal C}_{k,l}$ we have 
 \begin{equation}
 \phi(z)=\frac{\left((-1)^{l+1}z^{\alpha}e^{\pi \i \alpha (1-k)} ;q\right)_k}
{\left((-1)^{k-1} z e^{\pi \i (l-1)/\alpha} ;\tilde q\right)_l}, 
 \end{equation}
where $q:=e^{2\pi \i \alpha}$ and $\tilde q:=e^{-\frac{2\pi \i}{\alpha}}$. 
 We note that this formula follows from our general result (Theorem \ref{thm_phi_S2}) and formula \eqref{S2_Doney}. It is not surprising that all expressions involving the eigenfunctions $F(x)$ also simplify considerably for Doney classes ${\mathcal C}_{k,l}$. 
 \begin{proposition}\label{prop_Doney}
 Assume that $X \in {\mathcal C}_{k,l}$. 
 \begin{itemize} 
 \item[(i)] The function $G(x)$ (which is related to $F(x)$
 through \eqref{def_F_eigenfunction}) is  given by 
\begin{equation}\label{def_g2}
G(x)=\int_0^{\infty} e^{-z x} 
 z^{\alpha} \frac{\left((-1)^{l}z^{\alpha}e^{\pi \i \alpha (k+3)} ;q\right)_{-k-2}}
 {\left((-1)^{k+1} z e^{-\pi \i l/\alpha} ;\tilde q\right)_{-l+1}} \d z. 
\end{equation}
\item[(ii)] For $\re(z)>\max(0,\cos(\pi \rho))$ 
\begin{equation}\label{eqn_F_Laplace2}
\int_0^{\infty} e^{-zx} F(x) \d x=
\frac{\sqrt{\alpha}}{2} S_2(\alpha \rho) 
\frac{\left((-1)^{l}z^{\alpha}e^{\pi \i \alpha (k+2)} ;q\right)_{-k-1}}
{\left((-1)^{k} z e^{-\pi \i l/\alpha} ;\tilde q\right)_{-l+1}}.
\end{equation}
\item[(iii)] Assume that $\rho \ge 1/2$ and $\re(z) \in (-\alpha \hat \rho,0)$. If $l>0$ then
\begin{equation}\label{Mellin_F_Ckl_+}
\int_0^{\infty} x^{z-1} F(x) \d x=\frac{1}{2} (-1)^{(k+1)l} \Gamma(z) 
 \prod\limits_{j=1}^{k+1} 2 \sin(\pi (z+(j-1)\alpha))
 \prod\limits_{j=1}^l \frac{1}{2\sin(\pi (z-j)/\alpha)}, 
\end{equation}
while if $l<0$ we have
\begin{equation}\label{Mellin_F_Ckl_-}
\int_0^{\infty} x^{z-1} F(x) \d x=\frac{1}{2} (-1)^{(k+1)l} \Gamma(z) 
 \prod\limits_{j=1}^{|l|} 2 \sin(\pi (z+j-1)/\alpha)
 \prod\limits_{j=1}^{|k+1|} \frac{1}{2\sin(\pi (z-j\alpha))}, 
\end{equation}
\end{itemize}
\end{proposition}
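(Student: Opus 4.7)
The unifying observation for all three parts is that the Doney condition $\alpha\rho = l-k\alpha$ (equivalently, $\alpha\hat\rho = (k+1)\alpha - l$) forces the arguments of every $S_2$-factor in the formulas of Theorem~\ref{thm_F_Laplace_Mellin} and in the definition~\eqref{def_g} of $G$ to differ by integer linear combinations of $1$ and $\alpha$. Consequently, the two functional equations~\eqref{S2_two_functional_eqns} can be iterated a finite number of times to cancel the double-sine factors entirely and leave only finite products of ordinary sines. After that, the appendix identity~\eqref{S2_Doney} translates these products into $q$-Pochhammer symbols. The plan is to do this reduction once for each of the three formulas.

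For part~(i), I would start from~\eqref{def_g} and write $w = i\alpha\ln(z)/(2\pi)$, so that the integrand contains the pair $S_2(1+\alpha+\alpha\hat\rho/2+w)\,S_2(1+\alpha+\alpha\hat\rho/2-w)$. Using $\alpha\hat\rho/2 = (k+1)\alpha/2 - l/2$, repeated application of the second functional equation (to shift by $\alpha$) and the first functional equation (to shift by $1$) reduces this symmetric product to a finite ratio of sines with arguments linear in $w$, i.e.\ linear in $\ln z$. Exponentiating $w$ turns each $\sin(\pi(a\pm w))$-factor into a binomial $1-b z^{\pm\alpha}$ or $1 - b z^{\mp 1}$, and collecting terms produces precisely the $q$-Pochhammer expression~\eqref{def_g2}. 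One also has to absorb the prefactor $z^{\alpha\rho/2-1/2}$; the powers of $z$ that come out of reducing the double sine combine with $z^{\alpha\rho/2-1/2}$ to yield $z^\alpha$.

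Parts~(ii) and~(iii) are then direct applications of Theorem~\ref{thm_F_Laplace_Mellin} with the same reduction. For~(ii), substitute the Doney relation into \eqref{eqn_F_Laplace} and apply~\eqref{S2_two_functional_eqns} to collapse the factor $|S_2(1+\alpha/2+\alpha\hat\rho/2+i\alpha\ln(z)/(2\pi))|^2$ into a finite product of sines in $\log z$, which upon exponentiation becomes the $q$-Pochhammer ratio in~\eqref{eqn_F_Laplace2}. For~(iii), use \eqref{eqn_Mellin_F}: since $\alpha\hat\rho + z = z + (k+1)\alpha - l$, the ratio $S_2(z)/S_2(\alpha\hat\rho+z)$ telescopes under~\eqref{S2_two_functional_eqns}. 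Shifting by $-l$ units of $1$ (using the first functional equation) and by $k+1$ units of $\alpha$ (using the second) eliminates both double sines, leaving the two finite products of sines in~\eqref{Mellin_F_Ckl_+} and~\eqref{Mellin_F_Ckl_-}; the cases $l>0$ and $l<0$ correspond to shifting in opposite directions, which is why the sine products swap between numerator and denominator.

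The principal obstacle will be bookkeeping rather than analysis: one must keep careful track of phase factors and signs (the $(-1)^{(k+1)l}$ in part~(iii) and the sign conventions for the $q$-Pochhammer symbols with negative index in parts~(i)--(ii)), and distinguish the cases of positive/negative $k$ and $l$ so that the functional equations are iterated in the correct direction. The asymptotic bound~\eqref{S_2_asymptotics} should be used to confirm that all integrals and products converge and that one may freely rearrange terms; the admissibility constraints $(\alpha,\rho)\in\mathcal{A}$ and $\rho\ge 1/2$ (in part~(iii)) only need to be checked at the end to ensure the strip $-\alpha\hat\rho<\re(z)<0$ is nonempty and that no extraneous poles are encountered during the functional-equation shifts.
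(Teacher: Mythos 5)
Your plan is essentially the same as the paper's one-line proof, which substitutes the Doney relation $\alpha\hat\rho=(k+1)\alpha-l$ into \eqref{def_g}, \eqref{eqn_F_Laplace} and \eqref{eqn_Mellin_F} and then invokes the two appendix identities \eqref{S2_Doney} and \eqref{S2_Doney2}. For part~(iii) your proposal and \eqref{S2_Doney2} are literally the same computation: the ratio $S_2(z)/S_2(z+\alpha\hat\rho)$ has arguments differing by $(k+1)\alpha-l$, so the functional equations \eqref{S2_two_functional_eqns} telescope it to the finite sine products. There is, however, a gap in your plan for parts~(i) and~(ii) as you have written it. The relevant object is a symmetric product $S_2(b+w)S_2(b-w)$, and the base point $b$ differs from the normalization point $(1+\alpha)/2$ by $\delta=\tfrac{1-l}{2}+\tfrac{(k+2)\alpha}{2}$ (for part~(i); similarly for part~(ii)). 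This $\delta$ is only a \emph{half}-integer combination of $1$ and $\alpha$, so iterating \eqref{S2_two_functional_eqns} alone can never shift the symmetric product down to $|S_2((1+\alpha)/2+w)|^2$: the functional equations only move the argument by integer multiples of $1$ or $\alpha$. The missing ingredient is the reflection formula \eqref{S2_reflection_formula}, which converts the symmetric product into a ratio $S_2(b+w)/S_2(1+\alpha-b+w)$ whose arguments now differ by $2\delta=(1-l)+(k+2)\alpha$, an honest integer combination that \emph{does} telescope. (Alternatively one may shift the two factors asymmetrically and finish with reflection.) Once you add this step your approach is sound, and it is a legitimate alternative to the paper's derivation: the paper instead establishes \eqref{S2_Doney} from the $\gamma$-function representation \eqref{def_gamma_function}--\eqref{eqn_S_2_gamma} together with $(a;q)_\infty = (aq^n;q)_\infty(a;q)_n$, which lands directly on the $q$-Pochhammer form and saves you the intermediate step of collecting the sine factors $2\sin(\pi(a\pm w))$ into $(\cdot;q)_n$ and $(\cdot;\tilde q)_m$ symbols by hand. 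In a fully written-out version you would need to track the phase prefactors $e^{\pi\i a}z^{\mp\alpha/2}$ or $e^{\pi\i a}z^{\mp 1/2}$ that each sine contributes; these collect into the $e^{\pi\i\alpha(k+\cdot)}$, $e^{-\pi\i l/\alpha}$ phases inside the $q$-Pochhammer symbols and into the overall power of $z$, which you correctly observe must combine with $z^{\alpha\rho/2-1/2}$ to give $z^\alpha$ in \eqref{def_g2}. The sign $(-1)^{(k+1)l}$ in part~(iii) comes out of exactly the $(-1)^{mn}$ factor in \eqref{S2_Doney2}, so your description of the bookkeeping is accurate.
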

\begin{proof}
The proof follows immediately from \eqref{def_g}, \eqref{eqn_F_Laplace}, \eqref{eqn_Mellin_F} and formulas 
\eqref{S2_Doney} and \eqref{S2_Doney2}. 
\end{proof}

\subsection{Spectrally one-sided processes}\label{section_one_sided}

The spectrally-negative (spectrally-positive) processes  belong to classes ${\mathcal C}_{0,1}$ (respectively, 
${\mathcal C}_{-1,-1}$). All the formulas given in Proposition \ref{prop_Doney} for general Doney classes 
${\mathcal C}_{k,l}$ remain valid, except that in the spectrally-negative case 
$\alpha \rho=1$ we should take $G(x)\equiv 0$. Let us consider this case in more detail. 

In the spectrally-negative case it is well-known that the random variable ${\overline X}_{\ee(1)}$ has exponential distribution 
with parameter one, so that 
$$
f_{\overline X}(x)=e^{-x}, \;\;\; x>0. 
$$
Following the same analysis as before (or simply using formula \eqref{p_piialpha_F}), we would obtain
the co-eigenfunction 
$$
\hat F(x)=e^{-x \cos(\pi/\alpha)} \sin(x \sin(\pi/\alpha)). 
$$ 
The eigenfunction is given by \eqref{def_F_eigenfunction} and \eqref{def_g2}: 
\begin{align*} F(x)&=e^{x\cos(\pi/\alpha)}\sin(x\sin(\pi/\alpha)+\pi (2-\alpha)/(2\alpha))\\
&+\frac{\alpha}{2\pi} \sin(\pi \alpha)
\int_0^{\infty} \frac{e^{- ux} u^{\alpha} \d u}{1+2\cos(\pi \alpha)u^{\alpha} + u^{2\alpha}}.
\end{align*}

\section{Conclusion}\label{section_conclusion}

Our methods for establishing Theorems \ref{thm_main}, \ref{theorem_2} and \ref{thm_F_Laplace_Mellin} are fundamentally based on
the Wiener-Hopf factorization (Theorem \ref{theorem_WHF}), which comes from the fluctuation theory of L\'evy processes. We hope that the methods developed in this paper will help to study spectral properties of semigroups of more general L\'evy processes on a half-line (see \cite{Kwasnicki2013} for some results in this direction). Of course, the case of stable processes is a very special one, as stable processes lie in the intersection of the class of L\'evy processes and the class of self-similar Markov processes. We would like to mention that the spectral properties of the semigroup of general positive self-similar Markov processes (which include stable processes killed on the first exit from $(0,\infty)$) are studied in \cite{Patie_Savov_Zhao} using a different approach (based on Lamperti transformation and Mellin transform techniques). 

Finally, we would like to emphasize the importance of the double sine function $S_2(z;\alpha)$ in the study of stable processes. This function appears in the definition \eqref{def_F_eigenfunction} of the eigenfunctions, in formulas  \eqref{eqn_F_Laplace} and \eqref{eqn_Mellin_F} that give the Laplace and Mellin transform of the eigenfunctions and in formula \eqref{eq_phi_S2}, which gives the Wiener-Hopf factors of stable processes. So far, apart from some number-theoretic applications \cite{Koyama2007204,Tanaka},
 this curious special function has appeared  mostly in the Physics literature \cite{Fock,Ponsot_Teschner,Ruijsenaars2,Volkov}, where it is used in studying quantum topology and cluster algebras. It is an interesting question whether this appearance of the double sine function
 in our study of stable processes is simply a coincidence or there is indeed a deeper connection between stable processes on the half-line and quantum topology and/or cluster algebras.

\section*{Acknowledgements}
Research of A. Kuznetsov was supported by the Natural Sciences and Engineering Research Council of Canada. 
M.~Kwa\'snicki was supported by the Polish National Science Centre (NCN) grant no. 2011/03/D/ST1/00311.
The authors would like to thank Pierre Patie and Mladen Savov for insightful discussions and two anonymous referees for careful reading of the paper and for many helpful suggestions.



\begin{appendices}

    \setcounter{proposition}{0}
    \renewcommand{\theproposition}{\Alph{section}\arabic{proposition}}
    \setcounter{theorem}{0}
    \renewcommand{\thetheorem}{\Alph{section}\arabic{theorem}}
    \numberwithin{equation}{section}

\section{The double sine function}\label{AppendixA}
    \renewcommand{\theequation}{\Alph{section}.\arabic{equation}}

In a series of papers Koyama and Kurokawa (see \cite{Kurokawa,Koyama2007204}, for example) have studied the multiple sine functions $S_r(z;{\mathbf w})$ where $z\in \c$, $r\in {\mathbb N}$ 
and ${\mathbf w}=(w_1,\dots,w_r)$, $w_i \in \c \setminus (-\infty,0]$. These are homogeneous functions (in the sense that $S_r(cz;c{\mathbf w})=S_r(z;{\mathbf w})$ for any $c>0$) that satisfy certain functional equations. Considering the case $r=2$, 
the function  $S_2(z;{\mathbf w})$ satisfies 
$$
S_2(z+w_i;{\mathbf w})=\frac{S_2(z;{\mathbf w})}{2\sin(\pi z /w_i)}, \;\;\; i \in \{1,2\}. 
$$
It can be shown \cite{Bult} that $S_2(z;{\mathbf w})$ is a unique meromorphic function of $z$ which satisfies the above functional equations and has value one at $z=(w_1+w_2)/2$. Due to the homogeneity of $S_2(z;(w_1,w_2))$ we can set $w_1=1$, and from now on we will write simply 
$S_2(z;\alpha)=S_2(z;(1,\alpha))$. Note that the same homogeneity property implies an identity 
\begin{equation}\label{S_2_modular}
S_2(z;\alpha)=S_2(z/\alpha;1/\alpha).
\end{equation}

As the double sine function is becoming better known and more widely used in Mathematics and Physics literature, 
it seems that most authors introduce a new notation for this object, which makes it very hard to navigate the literature and to find relevant results. Below we provide a short summary of various notations used for the double sine function $S_2(z;\alpha)$:
\begin{itemize}
\item[(i)]  The double sine function $S_b(z)$ of Ponsot and Teschner \cite{Ponsot_Teschner}:
$$
S_2(z;\alpha)=S_{\sqrt{\alpha}}(z/\sqrt{\alpha})^{-1}.
$$
\item[(ii)] The hyperbolic gamma function $G_{{\textnormal{hyp}}}(w_1,w_2;z)$ of Ruijsenaars \cite{Ruijsenaars2,Ruijsenaars}: 
$$
S_2(z;\alpha)=G_{{\textnormal{hyp}}}(1,\alpha;\i (1/2+\alpha/2-z))^{-1}.
$$
\item[(iii)] The hyperbolic gamma function $\Gamma_h(z;w_1,w_2)$ of van de Bult \cite{Bult}: 
$$
S_2(z;\alpha)=\Gamma_h(z;1,\alpha)^{-1}.
$$ 
\item[(iv)] The double gamma function $G(z;\alpha)$ of Barnes \cite{Barnes1899}:
\begin{equation}\label{eqn_S2_Barnes_Gamma}
S_2(z;\alpha)=(2\pi)^{(1+\alpha)/2-z} \frac{G(z;\alpha)}{G(1+\alpha-z;\alpha)}. 
\end{equation}
\item[(iv)] The $\gamma$-function (see \cite{Volkov} and the references therein): assume that $\im(\alpha)>0$ and define $q:=e^{2\pi \i \alpha}$, $\tilde q:=e^{-2\pi \i/\alpha}$ and 
\begin{equation}\label{def_gamma_function}
\gamma(z;\alpha):=\frac{(q e^{-2\i \pi z};q)_{\infty}}{(e^{-2 \pi \i z /\alpha};\tilde q)_{\infty}},
\end{equation}
where $(a,q)_{\infty}:=\prod_{k\ge 0} (1-aq^k)$ is the q-Pochhammer symbol. The $\gamma$-function is related to the double sine function through the identity
\begin{equation}\label{eqn_S_2_gamma}
S_2(z;\alpha)=e^{-\pi \i (2z-1-\alpha)^2/(8\alpha)+\pi \i (\alpha+1/\alpha)/24}  \times  \frac{1}{\gamma(z;\alpha)}. 
\end{equation}
\item[(iv)] The quantum dilogarithm function $\Phi^{\hbar}(z)$ (see \cite{Fock} and the references therein) is related to the double sine
 function  $S_2(z;\alpha)$ through the above formula \eqref{eqn_S_2_gamma} and the identity
$$
\gamma(z;\alpha)=\Phi^{\alpha}(\pi \i (1+\alpha-2z))^{-1}. 
$$
\end{itemize}

 \begin{figure}
\centering
\includegraphics[height =8cm]{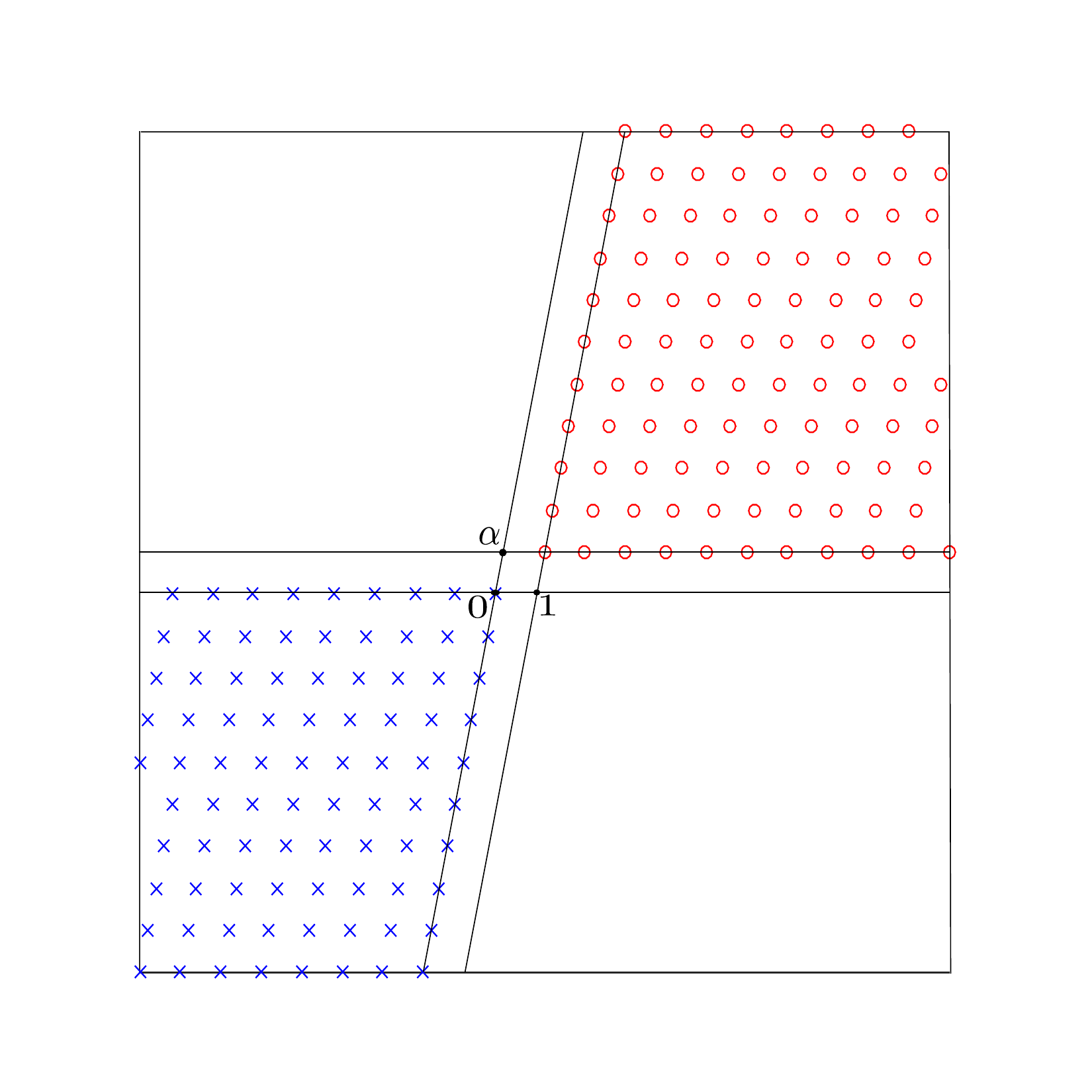} 
\caption{The roots (blue crosses) and the poles (red circles) of the double sine function $S_2(z;\alpha)$.}
\label{fig_2}
\end{figure}
 
For fixed $\alpha \in \c \setminus (-\infty,0]$, 
the double sine function is meromorphic in $z$-variable and it admits the product representation of the form
\begin{equation}\label{eqn_S2_Weierstrass}
S_2(z;\alpha)=e^{A(\alpha)+B(\alpha)z+C(\alpha)z^2} \frac{z}{z-1-\alpha}
\prod\limits_{m\ge 0} \prod\limits_{n\ge 0} {}^{'}
\frac{P(-z/(m+\alpha n))}{P(z/(m+1+\alpha(n+1)))}
\end{equation}
where $P(z):=(1-z)\exp(z+z^2/2)$ and the prime in the second product means that the term corresponding to $m=n=0$ is omitted.
The explicit expressions for $A(\alpha)$, $B(\alpha)$ and $C(\alpha)$ can be found in 
\cite[Theorem 6]{Tanaka}. 
It is clear from \eqref{eqn_S2_Weierstrass} that $S_2(z;\alpha)$ has zeros at 
points $\{-m-\alpha n \; : \; m\ge0, n\ge 0\}$ and poles at points $\{m+\alpha n \; : \; m\ge 1, n\ge 1\}$. Moreover, the pole 
at $z=\alpha+1$ is simple (other poles may have multiplicity greater than one when $\alpha$ is rational). Note that if
$\im(\alpha)>0$ and if we look at the lattice $\{m+\alpha n, \; m, n\in {\mathbb Z}\}$, then the poles (zeros) 
of $S_2(z;\alpha)$ lie in the first (respectively, third) quadrant of this lattice (see Figure \ref{fig_2}). 

Besides the value $S_2(1/2+\alpha/2;\alpha)=1$, the following values are known explicitly \cite{Koyama2007204}: 
\begin{equation}\label{S2_special_values}
S_2(1;\alpha)=1/S_2(\alpha;\alpha)=\sqrt{\alpha}, \;\;\; 
S_2(1/2;\alpha)=S_2(\alpha/2;\alpha)=\sqrt{2}. 
\end{equation} 
The above result \eqref{eqn_S2_Barnes_Gamma} implies a very useful reflection formula
\begin{equation}\label{S2_reflection_formula}
S_2(z;\alpha)S_2(1+\alpha-z;\alpha)=1.
\end{equation}

Let us denote $q=e^{2\pi \i \alpha}$ and $\tilde q=e^{-\frac{2\pi \i}{\alpha}}$. 
Using \eqref{def_gamma_function}, \eqref{eqn_S_2_gamma}, \eqref{S2_reflection_formula} and the identity $(a;q)_{\infty}=(aq^n;q)_{\infty} (a;q)_n$ we can derive a useful result
\begin{align}\label{S2_Doney}
&S_2(1/2+\alpha/2+(m-n\alpha)/2 + z;\alpha)S_2(1/2+\alpha/2+(m-n\alpha)/2 - z;\alpha)
 \\ & \qquad \qquad \nonumber
=
e^{-\pi \i (m-\alpha n) z/\alpha}\frac{((-1)^{m-1}e^{-2\pi \i z+\pi \i \alpha(1-n)};q)_n}
{((-1)^{n-1} e^{-2\pi \i z/\alpha+\pi \i (m-1)/\alpha};\tilde q)_m}. 
\end{align}
When $m,n \in {\mathbb Z}^+$, we can apply repeatedly the functional equations \eqref{S2_two_functional_eqns} and prove the following result
\begin{equation}\label{S2_Doney2}
\frac{S_2(z;\alpha)}{S_2(z+m-n\alpha;\alpha)}=(-1)^{mn} \prod\limits_{j=1}^m 2\sin(\pi (z+j-1)/\alpha) 
 \prod\limits_{j=1}^n \frac{1}{2 \sin(\pi(z-j \alpha))}.
\end{equation}
A similar expression when $-m, -n \in {\mathbb Z}^+$ can be easily obtained from the above formula by changing variables 
$z+m-n\alpha=w$ (or by using transformation \eqref{S_2_modular} or the reflection formula \eqref{S2_reflection_formula}). 

An asymptotic behavior of the double sine function as the imaginary part of the argument increases is described by the following equation 
\begin{equation}\label{S2_asymptotic}
  |S_2(c+\i y;\alpha)| =e^{\pi |y| (1+\alpha-2c)/(2\alpha)}(1+o(1)), \;\;\; y\to \infty. 
\end{equation}
See formula 135 in \cite{Ponsot_Teschner}. It is easy to prove that this result holds  uniformly for $c$ on compact subsets of $\r$. We note that this result implies \eqref{S_2_asymptotics}.

Finally, the ``b-beta integral" of Ponsot-Teschner (Lemma 15 in \cite{Ponsot_Teschner}, see also formula 7 in \cite{Volkov}) implies the following result: For $\alpha>0$, $0<b<(1+\alpha)/2$ and $s\in (-b,b)$ we have
\begin{equation}\label{eqn_tau_binomial}
\int_{0}^{\infty} x^{s-1} |S_2(1/2+\alpha/2+b + \i \alpha \ln(x)/(2\pi);\alpha)|^2 \d x=\frac{2\pi}{\sqrt{\alpha}} 
\frac{S_2(2b;\alpha)}{S_2(b+s;\alpha)S_2(b-s;\alpha)}. 
\end{equation}

\end{appendices}

\end{document}